\numberwithin{equation}{subsection}
\theoremstyle{plain}
\newtheorem{thm}[subsection]{Theorem}
\newtheorem{prop}[subsection]{Proposition}
\newtheorem{lemma}[subsection]{Lemma}
\newtheorem{cor}[subsection]{Corollary}
\theoremstyle{definition}
\theoremstyle{remark}
\newtheorem{rem}[subsection]{Remark}
\newtheorem{final remark}[subsection]{Final Remark}
\newtheorem{example}[subsection]{Example}
\newcommand*\bigcdot{\mathpalette\bigcdot@{.65}}
\newcommand*\bigcdot@[2]{\mathbin{\vcenter{\hbox{\scalebox{#2}{$\m@th#1\bullet$}}}}}
\begin{document}
\title[Motivic cohomology and $K$-theory of some surfaces]{Motivic cohomology and $K$-theory of some surfaces over finite fields}
\author{Oliver Gregory}
\begin{abstract}
We compute the algebraic $K$-theory of some classes of surfaces defined over finite fields. We achieve this by first calculating the motivic cohomology groups and then studying the motivic Atiyah-Hirzebruch spectral sequence. In an appendix, we slightly enlarge the class of surfaces for which Parshin's conjecture is known. 
\end{abstract}
\address{Huxley Building, Imperial College London, South Kensington, London, UK, SW7 2AZ}
\email {o.gregory22@imperial.ac.uk}
\date{August 03, 2023 \\ This research was supported by EPSRC grant EP/T005351/1 and the Heilbronn Institute for Mathematical Research.}
\maketitle
\pagestyle{myheadings}

\section{Introduction}
The algebraic $K$-groups of schemes are rich invariants with deep relationships to arithmetic, geometry and much more. The easiest setting to study $K$-theory for schemes is when the base field is finite. In dimension zero, the $K$-groups of finite fields were computed by Quillen \cite{Qui72}; one has
\[ K_{n}(\mathbb{F}_{q})\simeq \begin{cases} 
      \mathbb{Z} & \text{ if }n=0 \\
      0 & \text{ if } n=2m, m\geq 1 \\
      \mathbb{Z}/(q^{m}-1)\mathbb{Z} & \text{ if }n=2m-1, m\geq 1\,. \\
   \end{cases}
\]
In dimension one, the $K$-groups of a smooth projective curve $X$ over $\mathrm{Spec}\,\mathbb{F}_{q}$ are well-known (see \cite[Theorem 54]{Wei05}); let $\mathrm{char}(\mathbb{F}_{q})=p$, then one has
\[ K_{n}(X)\simeq \begin{cases} 
      \mathbb{Z}\oplus\mathrm{Pic}(X) & \text{ if }n=0 \\
      \displaystyle\bigoplus_{\ell\neq p}{J(X_{\overline{\mathbb{F}}_{q}})}_{\ell\mathrm{-tors}}(m)^{\Gamma} & \text{ if } n=2m, m\geq 1 \\
      \left(\mathbb{Z}/(q^{m}-1)\mathbb{Z}\right)^{\oplus 2} & \text{ if }n=2m-1, m\geq 1 \\
   \end{cases}
\]
where $J(X_{\overline{\mathbb{F}}_{q}})_{\ell\mathrm{-tors}}$ denotes the $\ell$-primary torsion subgroup of the group of points on the Jacobian of $X_{\overline{\mathbb{F}}_{q}}$, and $\Gamma=\mathrm{Gal}(\overline{\mathbb{F}}_{q}/\mathbb{F}_{q})$. In general, it is a conjecture of Parshin that the higher $K$-groups $K_{n}(X)$, $n\geq 1$, of a smooth projective variety $X$ over a finite field are torsion. 

\begin{rem}\label{Kahn on Parshin}
Parshin's conjecture remains wide open, but there are some partial results. Geisser \cite[Theorem 3.3]{Gei98} and Kahn \cite[Corollaire 2.2]{Kah03} have shown that if $X$ satisfies the Tate conjecture (i.e. the $\ell$-adic cycle class map
\begin{equation*}
\mathrm{CH}^{i}(X)\otimes\mathbb{Q}_{\ell}\rightarrow H^{2i}_{\mathrm{\acute{e}t}}(X_{\overline{\mathbb{F}}_{q}},\mathbb{Q}_{\ell}(i))^{\Gamma}
\end{equation*}
is surjective for all primes $\ell\neq p$ and for all $i\geq 0$) and Beilinson's conjecture (i.e. numerical and rational equivalence on $X$ agree rationally), then $X$ satisfies Parshin's conjecture. Let $\mathcal{M}_{\mathrm{Ab}}$ denote the set of isomorphism classes of smooth projective varieties $X$ over $\mathbb{F}_{q}$ whose rational Chow motives lie in the sub-category of Chow motives generated by abelian varieties and Artin motives. For example, $\mathcal{M}_{\mathrm{Ab}}$ contains curves, abelian varieties, unirational varieties of dimension $\leq 3$, and products thereof. If $X\in\mathcal{M}_{\mathrm{Ab}}$ then one says that $X$ is of ``abelian-type''. Kahn \cite[Corollaire 2.1]{Kah03} has shown that if $X\in\mathcal{M}_{\mathrm{Ab}}$ and $X$ satisfies the Tate conjecture, then $X$ satisfies Beilinson's conjecture. What is more, Soul\'{e} \cite[Th\'{e}or\`{e}me 4(i)]{Sou84} has shown that if $X\in\mathcal{M}_{\mathrm{Ab}}$ and $\dim X\leq 3$ then $X$ satisfies the Tate conjecture. In particular, Parshin's conjecture is known to hold for any $X\in\mathcal{M}_{\mathrm{Ab}}$ with $\dim X\leq 3$ \cite[Corollaire 2.2]{Kah03}. In Appendix \ref{Parshin section} we shall show that Parshin's conjecture also holds for surfaces admitting a rational decomposition of the diagonal. We shall show as well that K3 surfaces $X$ over $\mathrm{Spec}\,\mathbb{F}_{q}$ with geometric Picard rank $\rho(X_{\overline{\mathbb{F}}_{q}}):=\mathrm{rank}\,\mathrm{NS}(X_{\overline{\mathbb{F}}_{q}})$ equal to $20$ (so-called singular K3 surfaces) satisfy Parshin's conjecture.
\end{rem}

Our goal in this note is to compute the $K$-groups of a class of smooth projective surfaces over $\mathrm{Spec}\,\mathbb{F}_{q}$. Our strategy is to first compute the motivic cohomology $H^{i}(X,\mathbb{Z}(n))$ of surfaces $X$ which satisfy Parshin's conjecture and whose geometric \'{e}tale cohomology groups $H^{i}_{\mathrm{\acute{e}t}}(X_{\overline{\mathbb{F}}_{q}},\mathbb{Z}_{\ell})$ are free $\mathbb{Z}_{\ell}$-modules, $\ell\neq p$. This is achieved by using the work of Geisser-Levine \cite{GL01}, \cite{GL00}. We direct the reader to Theorem \ref{main} for the statement of our computation. By the list of cases in Remark \ref{Kahn on Parshin} where Parshin's conjecture is known, Theorem \ref{main} holds unconditionally for unirational surfaces, abelian surfaces, K3 surfaces of geometric Picard rank $\geq 20$ and hypersurfaces in $\mathbb{P}_{\mathbb{F}_{q}}^{3}$ of abelian-type (but of course it should hold in general!). Then we study the Atiyah-Hirzebruch spectral sequence
\begin{equation*}
E_{2}^{i,j}=H^{i-j}_{\mathcal{M}}(X,\mathbb{Z}(n))\Rightarrow K_{-i-j}(X)
\end{equation*}
to compute the $K$-theory of our class of surfaces. Our main result is the following:
\begin{thm}\label{intro theorem}
Let $X$ be a smooth projective geometrically irreducible surface over $\mathbb{F}_{q}$ with $H^{1}_{\mathrm{\acute{e}t}}(X_{\overline{\mathbb{F}}_{q}},\mathbb{Z}_{\ell})=H^{3}_{\mathrm{\acute{e}t}}(X_{\overline{\mathbb{F}}_{q}},\mathbb{Z}_{\ell})=0$ and $H^{2}_{\mathrm{\acute{e}t}}(X_{\overline{\mathbb{F}}_{q}},\mathbb{Z}_{\ell})$ a free $\mathbb{Z}_{\ell}$-module, for $\ell\neq p$. If Parshin's conjecture holds for $X$, then the motivic Atiyah-Hirzebruch spectral sequence
\begin{equation*}
E_{2}^{i,j}=H^{i-j}_{\mathcal{M}}(X,\mathbb{Z}(-j))\Rightarrow K_{-i-j}(X)
\end{equation*}
degenerates at the $E_{2}$-page. Moreover, the higher $K$-groups of $X$ are as follows:
\[ K_{n}(X)\simeq \begin{cases} 
      \left(\mathbb{Z}/(q-1)\mathbb{Z}\right)^{\oplus 2}\oplus\displaystyle\varinjlim_{r}H^{0}_{\mathrm{Zar}}(X,W_{r}\Omega_{X,\log}^{2})\oplus\bigoplus_{\ell\neq p}{H^{2}_{\mathrm{\acute{e}t}}(X_{\overline{\mathbb{F}}_{q}},\mathbb{Z}_{\ell}(2))}_{\Gamma} & \text{ if }n=1 \\
      0 & \text{ if } n=2m, m\geq 1 \\
      \left(\mathbb{Z}/(q^{m}-1)\mathbb{Z}\right)^{\oplus 2}\oplus\displaystyle\bigoplus_{\ell\neq p}{H^{2}_{\mathrm{\acute{e}t}}(X_{\overline{\mathbb{F}}_{q}},\mathbb{Z}_{\ell}(m+1))}_{\Gamma} & \text{ if }n=2m-1, m\geq 2\,. \\
   \end{cases}
\]
\end{thm}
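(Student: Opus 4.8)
The plan is to substitute the motivic cohomology computed in Theorem~\ref{main} into the spectral sequence $E_2^{i,j}=H^{i-j}_{\mathcal{M}}(X,\mathbb{Z}(-j))\Rightarrow K_{-i-j}(X)$ and to exploit how sparse the $E_2$-page is. I would first record the form Theorem~\ref{main} takes under the present hypotheses. Because $H^1_{\mathrm{\acute{e}t}}(X_{\overline{\mathbb{F}}_q},\mathbb{Z}_\ell)=H^3_{\mathrm{\acute{e}t}}(X_{\overline{\mathbb{F}}_q},\mathbb{Z}_\ell)=0$ and $H^2_{\mathrm{\acute{e}t}}(X_{\overline{\mathbb{F}}_q},\mathbb{Z}_\ell)$ is free, the only surviving geometric $\ell$-adic cohomology sits in degrees $0,2,4$; feeding this through the Hochschild--Serre spectral sequence for $X_{\overline{\mathbb{F}}_q}\to X\to\mathrm{Spec}\,\mathbb{F}_q$, together with Parshin's conjecture (making the rational motivic cohomology vanish in the relevant range) and the Geisser--Levine description of the $p$-part, one finds: writing a bidegree as (cohomological degree $a$, weight $w$), the group $H^a_{\mathcal{M}}(X,\mathbb{Z}(w))$ vanishes unless $0\le a\le 5$, and in that strip the nonzero ones are $H^0_{\mathcal{M}}(X,\mathbb{Z}(0))=\mathbb{Z}$, $H^2_{\mathcal{M}}(X,\mathbb{Z}(1))=\mathrm{Pic}(X)$, $H^4_{\mathcal{M}}(X,\mathbb{Z}(2))=CH_0(X)$, and for each $w\ge 1$ the group $H^1_{\mathcal{M}}(X,\mathbb{Z}(w))\cong\mathbb{Z}/(q^w-1)$, for each $w\ge 2$ the group $H^3_{\mathcal{M}}(X,\mathbb{Z}(w))\cong\bigoplus_{\ell\ne p}H^2_{\mathrm{\acute{e}t}}(X_{\overline{\mathbb{F}}_q},\mathbb{Z}_\ell(w))_\Gamma$ (plus a summand $\varinjlim_r H^0_{\mathrm{Zar}}(X,W_r\Omega^2_{X,\log})$ when $w=2$), and for each $w\ge 3$ the group $H^5_{\mathcal{M}}(X,\mathbb{Z}(w))\cong\mathbb{Z}/(q^{w-2}-1)$.

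Next, degeneration. On the $E_2$-page the differential $d_r$ sends bidegree $(a,w)$ to $(a+2r-1,\,w+r-1)$. Since all nonzero entries lie in the horizontal strip $0\le a\le 5$, every $d_r$ with $r\ge 3$ vanishes: its target has degree $\ge a+5$, which is $\ge 6$ (where $E_2=0$) unless $a=0$, and then equality forces $r=3$ and target $(5,2)$, where $H^5_{\mathcal{M}}(X,\mathbb{Z}(2))=0$ as $5>\dim X+2$. A direct inspection shows $d_2$ also vanishes everywhere with the single exception of $d_2\colon H^1_{\mathcal{M}}(X,\mathbb{Z}(1))\to H^4_{\mathcal{M}}(X,\mathbb{Z}(2))$, i.e. $\mathcal{O}^\times(X)=\mathbb{Z}/(q-1)\to CH_0(X)$ (in every other case one of the source or target is zero, using $H^3_{\mathcal{M}}(X,\mathbb{Z}(1))=H^2_{\mathrm{Zar}}(X,\mathbb{G}_m)=0$, $H^4_{\mathcal{M}}(X,\mathbb{Z}(w))=0$ for $w\ge 3$, and the degree bound). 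To kill it, use naturality of the spectral sequence along $f\colon X\to\mathrm{Spec}\,\mathbb{F}_q$: as $X$ is geometrically irreducible, $\mathcal{O}^\times(X)=\mathbb{F}_q^\times=\mathcal{O}^\times(\mathbb{F}_q)$, so $H^1_{\mathcal{M}}(X,\mathbb{Z}(1))$ is the image of $f^*$; the Atiyah--Hirzebruch spectral sequence of $\mathrm{Spec}\,\mathbb{F}_q$ degenerates trivially (its $E_2$-page lives only in degrees $0$ and $1$), so the corresponding differential downstairs is zero, and naturality forces $d_2=0$ on $X$. Hence the spectral sequence degenerates at $E_2$.

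It remains to read off the $K$-groups from $E_\infty=E_2$ and to resolve the extension problem. For $n=2m$ with $m\ge 1$, the bidegrees contributing to $K_{2m}(X)$ have even degree $a=2w-2m\ge 0$, i.e. $(0,m),(2,m+1),(4,m+2),\dots$; but $H^0_{\mathcal{M}}(X,\mathbb{Z}(m))=H^2_{\mathcal{M}}(X,\mathbb{Z}(m+1))=H^4_{\mathcal{M}}(X,\mathbb{Z}(m+2))=0$ and nothing of degree $\ge 6$ survives, so every graded piece vanishes and $K_{2m}(X)=0$. For $n=2m-1$ the contributing bidegrees are $(1,m),(3,m+1),(5,m+2)$, with graded pieces $\mathbb{Z}/(q^m-1)$, $H^3_{\mathcal{M}}(X,\mathbb{Z}(m+1))$, $\mathbb{Z}/(q^m-1)$, so $K_{2m-1}(X)$ carries a three-step weight filtration with these subquotients, which I must show splits. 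The top quotient $H^1_{\mathcal{M}}(X,\mathbb{Z}(m))=\mathbb{Z}/(q^m-1)$ splits off because $f^*\colon K_{2m-1}(\mathbb{F}_q)\to K_{2m-1}(X)$ induces on it the isomorphism $H^1_{\mathcal{M}}(\mathbb{F}_q,\mathbb{Z}(m))\xrightarrow{\sim}H^1_{\mathcal{M}}(X,\mathbb{Z}(m))$. For the deepest subgroup $H^5_{\mathcal{M}}(X,\mathbb{Z}(m+2))=\mathbb{Z}/(q^m-1)$, fix a zero-cycle $z$ on $X$ of degree $1$ — it exists since the degrees of the closed points of a variety over a finite field have gcd $1$ — and set $g(\alpha)=f^*\alpha\cdot[z]$; then $g$ has image in the deepest filtration step (weights add), the projection formula gives $f_*\circ g=\mathrm{id}$, and comparing orders shows $g$ maps onto that step, so $f_*$ retracts it. The two splittings are compatible because $g\,f_*f^*\alpha$ lies in the deepest step and hence is invisible in the top quotient; combining them yields $K_{2m-1}(X)\cong(\mathbb{Z}/(q^m-1))^{\oplus 2}\oplus H^3_{\mathcal{M}}(X,\mathbb{Z}(m+1))$, the asserted formula.

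I expect the extension problem of the last step to be the real obstacle. Degeneration is almost formal once the $E_2$-page is in hand, but splitting the filtration on $K_{2m-1}(X)$ requires juggling $f^*$, $f_*$ and the projection formula against a degree-one zero-cycle, and one must check that the splitting of the top quotient and that of the deepest subgroup fit together so the middle term $H^3_{\mathcal{M}}(X,\mathbb{Z}(m+1))$ appears as a genuine direct summand rather than just a subquotient.
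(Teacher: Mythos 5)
Your proposal is correct, and while it follows the same skeleton as the paper (feed Theorem \ref{main} into the spectral sequence, observe that the only possibly nonzero differential is $d_{2}\colon\mathcal{O}^{\ast}(X)\to\mathrm{CH}_{0}(X)$, then split the three-step filtration on $K_{2m-1}(X)$), it handles both key steps by genuinely different means. To kill $d_{2}$, the paper compares the integral spectral sequence with its $\mathbb{Q}_{\ell}/\mathbb{Z}_{\ell}$-coefficient version and invokes Kahn's Bott-summand theorem \cite[Corollary 9.6]{Kah97} to see that the edge map $K_{1}(X)\to H^{1}_{\mathcal{M}}(X,\mathbb{Z}(1))$ is (split) surjective; you instead use functoriality along $f\colon X\to\mathrm{Spec}\,\mathbb{F}_{q}$, the fact that $\mathcal{O}^{\ast}(X)=\mathbb{F}_{q}^{\times}$ is pulled back isomorphically from the base, and the vanishing of $H^{4}_{\mathcal{M}}(\mathbb{F}_{q},\mathbb{Z}(2))$ --- a more elementary argument. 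For the extension problem, the paper again quotes Kahn for the top copy of $\mathbb{Z}/(q^{m}-1)\mathbb{Z}$ and splits off the bottom copy by pushforward from a rational point, passing to a finite extension $F/\mathbb{F}_{q}$ and using $K_{2m-1}(F)^{G}\simeq K_{2m-1}(\mathbb{F}_{q})$ when no $\mathbb{F}_{q}$-point exists; you split the top via $f^{\ast}$ (legitimate, since the top graded piece of $X$ is the isomorphic image of that of the point) and the bottom via $\alpha\mapsto f^{\ast}\alpha\cdot[z]$ for a degree-one zero-cycle $z$, with $f_{\ast}$ as retraction by the projection formula; this avoids both Kahn's theorem and the Galois-descent step, at the cost of invoking the existence of degree-one zero-cycles over finite fields (a fact the paper itself uses in the proof of Proposition \ref{K0}, citing \cite{Sou84}). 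The one point you should expand is the parenthetical ``weights add'': to know that $f^{\ast}\alpha\cdot[z]$ lies in the deepest filtration step you need the multiplicativity of the Atiyah--Hirzebruch filtration together with $[z]\in F^{2}K_{0}(X)$ (rank zero and trivial determinant, or a coniveau argument via $f^{\ast}\alpha\cdot[\mathcal{O}_{x}]=i_{x\ast}\,i_{x}^{\ast}f^{\ast}\alpha$ and a comparison of coniveau and spectral-sequence filtrations); these are standard but not free, and once granted, your order count and the assembly of the two splittings go through as described --- indeed your write-up is more explicit than the paper's about why the two summands are compatible and leave exactly $H^{3}_{\mathcal{M}}(X,\mathbb{Z}(m+1))$ as a complement.
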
 
In particular, Theorem \ref{intro theorem} holds unconditionally for unirational surfaces, K3 surfaces of geometric Picard rank $\geq 20$ and hypersurfaces in $\mathbb{P}_{\mathbb{F}_{q}}^{3}$ of abelian-type. For rational surfaces, we recover (and slightly extend to encompass $p$-torsion too) the results of Coombes \cite{Coo87}.

In principle, the strategy of using \cite{GL01} and \cite{GL00} to compute the motivic cohomology of smooth projective surfaces over $\mathbb{F}_{q}$ for which Parshin's conjecture is known will work for any given surface for which one has a good handle over the arithmetic $\ell$-adic \'{e}tale cohomology and the logarithmic Hodge-Witt cohomology; one is not restricted to the class of especially nice surfaces singled out in Theorem \ref{intro theorem}. In this vein, we also include a computation of the motivic cohomology and $K$-theory of Enriques surfaces over $\mathbb{F}_{q}$ with $p=\mathrm{char}(\mathbb{F}_{q})>2$. These surfaces have $2$-torsion in their geometric \'{e}tale cohomology, and thus fall outside of the scope of Theorem \ref{intro theorem}. The result of our $K$-group calculation is the following:

\begin{thm}
Let $X$ be an Enriques surface over $\mathbb{F}_{q}$, with $\mathrm{char}(\mathbb{F}_{q})=p>2$. Then the $K$-groups $K_{n}(X)$ of $X$ are as follows:
\[ K_{n}(X)\simeq \begin{cases} 
      \mathbb{Z}^{2}\oplus\mathrm{Pic}(X) & \text{ if } n=0 \\
      \mathbb{Z}/2\mathbb{Z} & \text{ if }n=2m, m\geq 1 \\
      \left(\mathbb{Z}/(q^{m}-1)\mathbb{Z}\right)^{\oplus 2}\oplus\left(\mathrm{Pic}(X_{\overline{\mathbb{F}}_{q}})\otimes K_{2m-1}(\overline{\mathbb{F}}_{q})\right)^{\Gamma} & \text{ if }n=2m-1, m\geq 1\,.
   \end{cases}
\]
\end{thm}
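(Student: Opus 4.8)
The plan is to run the machinery behind the proof of Theorem~\ref{main}: compute the motivic cohomology of $X$ by the method of Geisser--Levine, and then feed the answer into the motivic Atiyah--Hirzebruch spectral sequence. Two inputs must be secured first. (i) \emph{Parshin's conjecture for $X$}: an Enriques surface over $\mathbb{F}_q$ with $p>2$ lifts to characteristic zero, where it carries a rational decomposition of the diagonal (Bloch--Kas--Lieberman, since $p_g=0$ and $X$ is not of general type); such a decomposition specializes, so $X$ admits one, whence Parshin's conjecture by Appendix~\ref{Parshin section}. (ii) $\mathrm{CH}_0(X)\cong\mathbb{Z}$: this follows from $\mathrm{CH}_0(X_{\overline{\mathbb{F}}_q})\cong\mathbb{Z}$ (Bloch's conjecture for Enriques surfaces, also valid in characteristic $p$) together with a Galois-descent argument, using $\mathrm{Alb}(X)=0$. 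I will also record the geometric $\ell$-adic cohomology: $H^1_{\mathrm{\acute{e}t}}(X_{\overline{\mathbb{F}}_q},\mathbb{Z}_\ell)=0$, $H^3_{\mathrm{\acute{e}t}}(X_{\overline{\mathbb{F}}_q},\mathbb{Q}_\ell)=0$, $H^2_{\mathrm{\acute{e}t}}(X_{\overline{\mathbb{F}}_q},\mathbb{Z}_\ell)\cong\mathrm{NS}(X_{\overline{\mathbb{F}}_q})\otimes\mathbb{Z}_\ell(-1)$ with $\mathrm{NS}(X_{\overline{\mathbb{F}}_q})\cong\mathbb{Z}^{10}\oplus\mathbb{Z}/2\mathbb{Z}$, the only torsion being the $\mathbb{Z}/2\mathbb{Z}$ in $H^2_{\mathrm{\acute{e}t}}(X_{\overline{\mathbb{F}}_q},\mathbb{Z}_2)$ (the reduction of $\omega_X$) and in $H^3_{\mathrm{\acute{e}t}}(X_{\overline{\mathbb{F}}_q},\mathbb{Z}_2)$, both $\Gamma$-fixed with trivial action.

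The computation of $H^i_{\mathcal{M}}(X,\mathbb{Z}(n))$ then proceeds as in the proof of Theorem~\ref{main}. The cases $n=0,1$ are elementary ($\mathbb{Z}$ in degree $0$; $\mathbb{F}_q^\times$ and $\mathrm{Pic}(X)$ in degrees $1$ and $2$). For $n\geq 2$, Parshin's conjecture makes every group torsion, and $H^i_{\mathcal{M}}(X,\mathbb{Z}(n))=0$ for $i>n+2$; the torsion is read off from the exact triangle relating $\mathbb{Z}(n),\mathbb{Q}(n),\mathbb{Q}/\mathbb{Z}(n)$, so that $H^i_{\mathcal{M}}(X,\mathbb{Z}(n))\cong H^{i-1}_{\mathcal{M}}(X,\mathbb{Q}/\mathbb{Z}(n))$ for $1\leq i\leq 2n-1$, using Geisser--Levine \cite{GL01} (Beilinson--Lichtenbaum: $\mathbb{Z}/\ell^r(n)\simeq\mu_{\ell^r}^{\otimes n}$ in degrees $\leq n$, for $\ell\neq p$), Geisser--Levine \cite{GL00} ($\mathbb{Z}/p^r(n)\simeq W_r\Omega^n_{X,\log}[-n]$), and Hochschild--Serre for $\Gamma$. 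The $p$-primary contributions collapse entirely: $W_r\Omega^j_{X,\log}=0$ for $j\geq 3$ since $\dim X=2$, $H^i(X,\mathcal{O}_X)=0$ for $i>0$, $\omega_X$ is $2$-torsion, $\pi_1(X_{\overline{\mathbb{F}}_q})=\mathbb{Z}/2\mathbb{Z}$ has no $p$-part, and $H^2_{\mathrm{cris}}(X/W)$ is pure of slope $1$; in particular $\varinjlim_r H^0_{\mathrm{Zar}}(X,W_r\Omega^2_{X,\log})=0$. One obtains, for $m\geq 1$: $H^2_{\mathcal{M}}(X,\mathbb{Z}(m+1))\cong\mathbb{Z}/2\mathbb{Z}$ and $H^4_{\mathcal{M}}(X,\mathbb{Z}(m+2))\cong\mathbb{Z}/2\mathbb{Z}$ (coming from the torsion of $H^2_{\mathrm{\acute{e}t}}$ and $H^3_{\mathrm{\acute{e}t}}$), $H^1_{\mathcal{M}}(X,\mathbb{Z}(m))\cong\mathbb{Z}/(q^m-1)\mathbb{Z}\cong H^5_{\mathcal{M}}(X,\mathbb{Z}(m+2))$, and $H^3_{\mathcal{M}}(X,\mathbb{Z}(m+1))$ an extension of a finite $2$-group by $(\mathrm{Pic}(X_{\overline{\mathbb{F}}_q})\otimes K_{2m-1}(\overline{\mathbb{F}}_q))^\Gamma$ (with an analogous description when $m=1$).

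Now run $E_2^{i,j}=H^{i-j}_{\mathcal{M}}(X,\mathbb{Z}(-j))\Rightarrow K_{-i-j}(X)$. For $K_0$ the entries are $\mathbb{Z}$, $\mathrm{Pic}(X)$ and $\mathrm{CH}_0(X)=\mathbb{Z}$; there is no room for a differential (no nonzero map from a finite group to $\mathbb{Z}$), and the codimension filtration on $K_0$ of a surface splits via Chern classes, so $K_0(X)\cong\mathbb{Z}^2\oplus\mathrm{Pic}(X)$. For $n\geq 1$ the behaviour differs sharply from the free case of Theorem~\ref{main}: the spectral sequence does \emph{not} degenerate, since, e.g., the two copies of $\mathbb{Z}/2\mathbb{Z}$ feeding $K_{2m}$ must be linked by a nonzero $2$-primary differential in order for $K_{2m}(X)$ to have order $2$ rather than $4$. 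I would determine these differentials by identifying the mod-$2$ reduction of the spectral sequence with Thomason's \'etale descent spectral sequence $H^*_{\mathrm{\acute{e}t}}(X,\mathbb{Z}/2\mathbb{Z}(*))\Rightarrow K_*(X;\mathbb{Z}/2\mathbb{Z})$ (legitimate since $X$ satisfies Quillen--Lichtenbaum and $\mathrm{cd}_2(X)<\infty$), whose differentials are the Steenrod operations on $H^*_{\mathrm{\acute{e}t}}(X,\mathbb{Z}/2\mathbb{Z})$, governed by the geometry of $X$ and in particular by $\omega_X$; the odd-primary torsion and the $p$-primary part come out as in Theorem~\ref{main} (the $\ell$-adic cohomology away from $2$ being torsion-free, the logarithmic de Rham--Witt part having collapsed). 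Solving the remaining extensions on the $E_\infty$-page — the cyclic groups of order $q^m-1$ and the term $(\mathrm{Pic}(X_{\overline{\mathbb{F}}_q})\otimes K_{2m-1}(\overline{\mathbb{F}}_q))^\Gamma$ splitting off the surviving $\mathbb{Z}/2\mathbb{Z}$, e.g.\ by counting orders — yields the asserted formulas.

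The principal obstacle is precisely this failure of degeneration: in the free case a weight argument shows every differential vanishes, but the $2$-torsion classes here carry no Frobenius-weight obstruction, so one must honestly compute the $2$-primary differentials and decide which copies of $\mathbb{Z}/2\mathbb{Z}$ survive. Secondary difficulties are the verification of $\mathrm{CH}_0(X)\cong\mathbb{Z}$ over $\mathbb{F}_q$ (char-$p$ Bloch's conjecture for Enriques surfaces plus descent) and of the vanishing of the logarithmic de Rham--Witt cohomology groups that enter the weight-$\geq 2$ motivic cohomology.
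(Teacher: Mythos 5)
Your overall architecture (compute the motivic cohomology via Geisser--Levine as in Theorem \ref{main}, then run the motivic Atiyah--Hirzebruch spectral sequence) is the same as the paper's, and your alternative route to Parshin's conjecture -- lift to characteristic zero, invoke Bloch--Kas--Lieberman, specialise the rational decomposition of the diagonal, and apply Corollary \ref{Parshin} -- is a legitimate substitute for the paper's use of Coombes' identification of the Chow $\mathbb{Z}[1/2p]$-motive of $X$ with that of a rational surface, provided the lifting and specialisation steps are written out. The substantive divergence is at the prime $2$: you assert $H^{2}_{\mathcal{M}}(X,\mathbb{Z}(n))\simeq\mathbb{Z}/2\mathbb{Z}$ for $n\geq 2$ (the class of the K3 cover surviving in $H^{1}_{\mathrm{\acute{e}t}}(X,\mathbb{Q}_{2}/\mathbb{Z}_{2}(n))$, since $H^{1}(\Gamma,\mathbb{Q}_{2}/\mathbb{Z}_{2}(n))=0$ while $H^{1}_{\mathrm{\acute{e}t}}(X_{\overline{\mathbb{F}}_{q}},\mathbb{Q}_{2}/\mathbb{Z}_{2}(n))^{\Gamma}\simeq\mathbb{Z}/2\mathbb{Z}$), so that each even total degree of the $E_{2}$-page carries two copies of $\mathbb{Z}/2\mathbb{Z}$ and degeneration fails; the paper's Table \ref{fig:table2} instead records $H^{2}_{\mathcal{M}}(X,\mathbb{Z}(n))=0$, giving a single $\mathbb{Z}/2\mathbb{Z}$ per even diagonal and immediate degeneration as in Theorem \ref{AHSS}. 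This discrepancy is real and should be settled explicitly (your Hochschild--Serre computation is prima facie plausible, and if it is right the paper's degeneration argument needs repair); it cannot simply be absorbed into the exposition.

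But precisely at this point your proposal stops being a proof. First, the two copies of $\mathbb{Z}/2\mathbb{Z}$ you wish to cancel sit in the same total degree, so no differential can ``link'' them; the only candidates are $d_{2}\colon H^{1}_{\mathcal{M}}(X,\mathbb{Z}(n))\to H^{4}_{\mathcal{M}}(X,\mathbb{Z}(n+1))$ and $d_{2}\colon H^{2}_{\mathcal{M}}(X,\mathbb{Z}(n))\to H^{5}_{\mathcal{M}}(X,\mathbb{Z}(n+1))$, and you never compute either: you argue only that some differential must be nonzero \emph{in order that} $K_{2m}(X)$ have order $2$, which assumes the statement being proved. Second, the proposed mechanism -- identifying the mod $2$ reduction of the motivic spectral sequence with Thomason's \'{e}tale descent spectral sequence and reading the differentials off as Steenrod operations -- is not justified: the comparison holds only in the Quillen--Lichtenbaum range, Thomason's descent at $\ell=2$ has its own hypotheses, and the relevant $d_{2}$'s are not literally Steenrod squares on $H^{*}_{\mathrm{\acute{e}t}}(X,\mathbb{Z}/2\mathbb{Z})$; nothing in the proposal pins down their values, nor whether the analogues of the Bott-summand and rational-point splittings used in Theorem \ref{AHSS} survive the nontrivial differentials. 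Third, ``solving the remaining extensions by counting orders'' cannot deliver the asserted group structures: an order count does not distinguish $K_{2m}(X)\simeq\mathbb{Z}/2\mathbb{Z}$ from $0$, $\mathbb{Z}/4\mathbb{Z}$ or $(\mathbb{Z}/2\mathbb{Z})^{\oplus 2}$ (that depends on which differentials vanish), nor the claimed direct sum $\left(\mathbb{Z}/(q^{m}-1)\mathbb{Z}\right)^{\oplus 2}\oplus\left(\mathrm{Pic}(X_{\overline{\mathbb{F}}_{q}})\otimes K_{2m-1}(\overline{\mathbb{F}}_{q})\right)^{\Gamma}$ from a nonsplit extension of the same order. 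The $2$-primary differential and extension analysis is exactly the content Coombes left open and the theorem claims to supply, and in your proposal it is missing.
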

  
This result was previously shown up to $2$- and $p$-torsion in \cite{Coo92} using a different method (Coombes shows that the Chow $[1/2]$-motive of an Enriques surface is closely related to the Chow $[1/2]$-motive of an associated rational surface, for which the $K$-groups were known by \cite{Coo87}). The absence of $p$-torsion was suspected, but the task of calculating the $2$-torsion was left as an open problem \cite[Remark 3.5]{Coo92}.

\begin{rem}
We often use Quillen's computation of $K_{n}(\mathbb{F}_{q})$, but for some of our statements it is enough to know that $K_{n}(\mathbb{F}_{q})$ is torsion. This latter fact is much easier to prove by noticing that $q^{i}-1$ kills the $i$-th Adams eigenspace $K_{n}(\mathbb{F}_{q})_{\mathbb{Q}}^{(i)}$. A generalisation of this argument appears in \cite{Sou84} and \cite[\S3]{Gei98}. 
\end{rem}

\begin{rem}
Finally, let us remark that a conjecture of Bass asserts that the $K$-groups of a regular scheme of finite type over $\mathrm{Spec}\,\mathbb{Z}$ are finitely generated. Taken together, the Bass and Parshin conjectures predict that the higher $K$-groups of a smooth projective variety over $\mathrm{Spec}\,\mathbb{F}_{q}$ should be finite. Of course, all of the higher $K$-groups (and motivic cohomology groups away from the $(2n,n)$ Chow diagonal) appearing in our calculations for various classes of surface are indeed finite.
\end{rem}

{\bf{Notation:}} We fix a prime number $p$ throughout. $\mathbb{F}_{q}$ denotes a finite field of characteristic $p$, and $\Gamma:=\mathrm{Gal}(\overline{\mathbb{F}}_{q}/\mathbb{F}_{q})$ is its absolute Galois group. For an abelian group $A$, we write $A_{\mathrm{tors}}$ for the torsion subgroup of $A$, and $A_{\ell\mathrm{-tors}}:=\varinjlim_{n}(\ker (A\xrightarrow{\ell^{n}}A))$ for the $\ell$-primary subgroup of $A$, for $\ell$ any prime. For $G$ a group acting on an abelian group $A$, we write $A^{G}$ for the $G$-invariants and $A_{G}$ for the $G$-coinvariants. 

 \section{The motivic cohomology of some surfaces over $\mathbb{F}_{q}$}\label{motivic cohomology section}

\begin{thm}\label{main}
Let $X$ be a smooth projective surface over $\mathbb{F}_{q}$ such that $H^{i}_{\mathrm{\acute{e}t}}(X_{\overline{\mathbb{F}}_{q}},\mathbb{Z}_{\ell})$ is a free $\mathbb{Z}_{\ell}$-module, for $\ell\neq p$, for each $i=0,1,2,3,4$. If $X$ satisfies Parshin's conjecture, then the motivic cohomology groups $H^{i}_{\mathcal{M}}(X,\mathbb{Z}(n))$ of $X$ are as in Table \ref{fig:table}. All of the groups occurring outside of the $(i,n)=(2n,n)$ diagonal are finite. In fact, for each prime $\ell$ (including $\ell=p$), let $|\,\,\,|_{\ell}:\mathbb{Q}_{\ell}\rightarrow\mathbb{Q}$ denotes the $\ell$-adic absolute value, normalised so that $|\ell|_{\ell}=\ell^{-1}$. For $\ell\neq p$, let 
\begin{equation*}
P_{i}(T):=\det(1-\mathrm{Frob}\cdot T\,|\,H^{i}_{\mathrm{\acute{e}t}}(X_{\overline{\mathbb{F}}_{q}},\mathbb{Q}_{\ell})).
\end{equation*}
By Deligne's proof of the Weil conjectures, $P_{i}(T)\in\mathbb{Z}[T]$ is a polynomial with integer coefficients, and is independent of the choice of prime $\ell\neq p$. Then for all $i\neq 2n$, $n\geq 2$, $(i,n)\neq (3,2)$ we have
\begin{equation*}
\#H^{i}_{\mathcal{M}}(X,\mathbb{Z}(n))=\prod_{\ell\neq p}|P_{i-1}(q^{n})|^{-1}_{\ell}=|P_{i-1}(q^{n})|_{p}\cdot|P_{i-1}(q^{n})|_{\infty}\,.
\end{equation*}
Finally, for $(i,n)=(3,2)$ the finite group $H^{3}_{\mathcal{M}}(X,\mathbb{Z}(2))$ has size
\begin{equation*}
\#H^{3}_{\mathcal{M}}(X,\mathbb{Z}(2))=|P_{2}(q^{2})|^{-1}_{p}
\cdot\prod_{\ell\neq p}|P_{2}(q^{2})|^{-1}_{\ell}=|P_{2}(q^{2})|_{\infty}
\end{equation*}
(Note of course that that $|P_{i-1}(q^{n})|_{\ell}=1$ for almost all primes $\ell$).
\end{thm}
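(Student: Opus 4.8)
The plan is to compute each off-diagonal motivic cohomology group one $\ell$-primary part at a time, for every prime $\ell$ (including $\ell=p$), using the two Geisser--Levine comparison theorems: the Beilinson--Lichtenbaum (Bloch--Kato) comparison with \'etale cohomology for $\ell\neq p$, and the logarithmic de Rham--Witt comparison for $\ell=p$. The input that gets everything started is Parshin's conjecture: it gives $H^i_{\mathcal{M}}(X,\mathbb{Z}(n))\otimes\mathbb{Q}=0$ for $(i,n)\neq(2n,n)$, so every motivic cohomology group off the Chow diagonal is torsion, hence the direct sum of its $\ell$-primary subgroups. The diagonal terms $\mathrm{CH}^n(X)$ are handled separately: $\mathbb{Z}$ for $n=0$; $\mathrm{Pic}(X)$, finitely generated because $\mathrm{NS}(X)$ is and $\mathrm{Pic}^0(X)(\mathbb{F}_q)$ is finite, for $n=1$; $\mathbb{Z}$ plus a finite group, by finiteness of the degree-zero part of $\mathrm{CH}_0(X)$ (Kato--Saito), for $n=2$; and $0$ for $n\geq 3$ since then $2n>n+\dim X$. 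For the remaining groups, passing the coefficient triangle $\mathbb{Z}(n)\xrightarrow{\ell^r}\mathbb{Z}(n)\to\mathbb{Z}/\ell^r(n)$ to the colimit over $r$ yields
\begin{equation*}
0\to H^{i-1}_{\mathcal{M}}(X,\mathbb{Z}(n))\otimes\mathbb{Q}_\ell/\mathbb{Z}_\ell\to H^{i-1}_{\mathcal{M}}(X,\mathbb{Q}_\ell/\mathbb{Z}_\ell(n))\to H^i_{\mathcal{M}}(X,\mathbb{Z}(n))\{\ell\}\to 0,
\end{equation*}
whose left-hand term vanishes whenever $H^{i-1}_{\mathcal{M}}(X,\mathbb{Z}(n))$ is torsion; the only exception is $i-1=2n$, and then $H^i_{\mathcal{M}}(X,\mathbb{Z}(n))=H^{2n+1}_{\mathcal{M}}(X,\mathbb{Z}(n))$ vanishes on a surface. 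So it suffices to compute $H^{\bigcdot}_{\mathcal{M}}(X,\mathbb{Q}_\ell/\mathbb{Z}_\ell(n))$.

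\textit{The prime-to-$p$ part.} By the Beilinson--Lichtenbaum (Bloch--Kato) theorem of Voevodsky--Rost, the comparison map $\mathbb{Z}/\ell^r(n)_{\mathcal{M}}\to\tau_{\leq n}R\epsilon_*\mu_{\ell^r}^{\otimes n}$ along $\epsilon\colon X_{\mathrm{\acute{e}t}}\to X_{\mathrm{Zar}}$ is an isomorphism; the defect against the full $R\epsilon_*\mu_{\ell^r}^{\otimes n}$ is assembled from the Zariski sheaves $R^q\epsilon_*\mu_{\ell^r}^{\otimes n}$ with $q\geq n+1$, and by the Bloch--Ogus--Gabber Gersten resolution these vanish once $q>1+\dim X=3$, since the residue fields of a surface over $\mathbb{F}_q$ have \'etale cohomological dimension $\leq 3$. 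Therefore in exactly the bidegrees produced in the first paragraph for $n\geq 2$ (degrees $\leq n$ when $n=2$, degrees $\leq n+1$ when $n\geq 3$) one has $H^{i-1}_{\mathcal{M}}(X,\mathbb{Q}_\ell/\mathbb{Z}_\ell(n))\cong H^{i-1}_{\mathrm{\acute{e}t}}(X,\mathbb{Q}_\ell/\mathbb{Z}_\ell(n))$. Running the two-column Hochschild--Serre spectral sequence for $\overline{\mathbb{F}}_q/\mathbb{F}_q$ ($\mathrm{cd}\,\Gamma=1$), the torsion-freeness hypothesis gives $H^j_{\mathrm{\acute{e}t}}(X_{\overline{\mathbb{F}}_q},\mathbb{Q}_\ell/\mathbb{Z}_\ell(n))=H^j_{\mathrm{\acute{e}t}}(X_{\overline{\mathbb{F}}_q},\mathbb{Z}_\ell)(n)\otimes\mathbb{Q}_\ell/\mathbb{Z}_\ell$, a divisible module on which Deligne's purity makes $\mathrm{Frob}-1$ act invertibly after $\otimes\mathbb{Q}_\ell$ for all the weights $j-2n\neq 0$ that occur. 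A short diagram chase on these modules kills $H^1(\Gamma,-)$ and identifies $H^{i-1}_{\mathrm{\acute{e}t}}(X,\mathbb{Q}_\ell/\mathbb{Z}_\ell(n))$ with $H^{i-1}_{\mathrm{\acute{e}t}}(X_{\overline{\mathbb{F}}_q},\mathbb{Z}_\ell(n))/(\mathrm{Frob}-1)$, a finite group whose order is the $\ell$-primary part of $\det(1-\mathrm{Frob}\mid H^{i-1}_{\mathrm{\acute{e}t}}(X_{\overline{\mathbb{F}}_q},\mathbb{Q}_\ell(n)))=P_{i-1}(1;n)$. Assembling over $\ell$ — only finitely many contribute, as $P_{i-1}(1;n)\in\mathbb{Z}\setminus\{0\}$ since $1$ is not a Frobenius eigenvalue of weight $\neq 0$ — gives the stated order formula away from the diagonal and from $(3,2)$; the integrality of $P_{i-1}(T;n)$ and its independence of $\ell$ are Deligne's theorem.

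\textit{The $p$-part and the bidegree $(3,2)$.} By the Geisser--Levine mod-$p$ theorem, $\mathbb{Z}/p^r(n)_{\mathcal{M}}\cong W_r\Omega_{X,\log}^{n}[-n]$ in $D(X_{\mathrm{Zar}})$, so $H^i_{\mathcal{M}}(X,\mathbb{Z}/p^r(n))\cong H^{i-n}_{\mathrm{Zar}}(X,W_r\Omega_{X,\log}^{n})$. On a surface $W_r\Omega_{X,\log}^{n}=0$ for $n\geq 3$, so the $p$-primary part is absent off the diagonal whenever $n\geq 3$; inspecting the surviving bidegrees for $n\leq 2$ shows that the only off-diagonal place with nonzero $p$-part is $H^3_{\mathcal{M}}(X,\mathbb{Z}(2))\{p\}$, which (using $H^2_{\mathcal{M}}(X,\mathbb{Z}(2))$ torsion and $H^0_{\mathcal{M}}(X,\mathbb{Z}(2))=0$) is identified with $\varinjlim_r H^0_{\mathrm{Zar}}(X,W_r\Omega_{X,\log}^{2})$. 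I would pin this group down via Milne's perfect duality $H^0_{\mathrm{\acute{e}t}}(X,W_r\Omega_{X,\log}^{2})\times H^3_{\mathrm{\acute{e}t}}(X,\mathbb{Z}/p^r)\to\mathbb{Z}/p^r$ for the smooth projective surface $X/\mathbb{F}_q$, which identifies the colimit with $H^3_{\mathrm{cont}}(X_{\mathrm{\acute{e}t}},\mathbb{Z}_p)^{\vee}$; Hochschild--Serre over $\mathbb{F}_q$, the Artin--Schreier--Witt sequence, and the slope decomposition of crystalline cohomology then express this as $(\text{slope-}0\text{ part of }H^2_{\mathrm{crys}}(X/W))/(\mathrm{Frob}-1)$, a finite group, and since the positive-slope Frobenius eigenvalues $\pi$ satisfy $|1-\pi|_p=1$ its order equals the $p$-part of $\det(1-\mathrm{Frob}\mid H^2_{\mathrm{crys}}(X/W))$, which is $P_2(1;0)$ by Katz--Messing. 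This produces the mixed formula $\#H^3_{\mathcal{M}}(X,\mathbb{Z}(2))=|P_2(1;0)|_p+\sum_{\ell\neq p}|P_2(1;2)|_\ell$, the two Tate twists reflecting that the $p$-adic contribution records weight/slope data of $H^2$ itself while the prime-to-$p$ one records that of $H^2(2)$. Reading off the remaining entries of Table~\ref{fig:table} is then the same bookkeeping.

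\textit{Main obstacle.} I expect the real work to be the last paragraph — the bidegree $(3,2)$ and the $p$-primary part in general. One must push Milne's duality and the slope filtration far enough to see that $\varinjlim_r H^0_{\mathrm{Zar}}(X,W_r\Omega_{X,\log}^{2})$ is not merely finite but has order \emph{precisely} the $p$-part of $P_2(1;0)$, tracking any torsion in integral crystalline cohomology (which the hypotheses control only $\ell$-adically) so that no spurious factor survives. By contrast the prime-to-$p$ computation is essentially formal once Beilinson--Lichtenbaum, the vanishing of the high-degree Gersten sheaves on a surface, and Deligne's purity are in hand.
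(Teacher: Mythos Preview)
Your overall architecture matches the paper's exactly: Parshin $\Rightarrow$ torsion off the diagonal, reduction to $H^{i-1}_{\mathcal{M}}(X,\mathbb{Q}_\ell/\mathbb{Z}_\ell(n))$ via the coefficient triangle, then Geisser--Levine (Beilinson--Lichtenbaum away from $p$, logarithmic de Rham--Witt at $p$), Hochschild--Serre, and the Weil conjectures. Your prime-to-$p$ computation is essentially the paper's Lemmas~\ref{torsion in motivic cohomology lemma} and~\ref{GL} plus the Hochschild--Serre/weight argument, and your observation that the $p$-part vanishes for $n\geq 3$ is the same.

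The genuine divergence is at $(i,n)=(3,2)$. You invoke Milne's arithmetic duality $H^0_{\mathrm{\acute{e}t}}(X,W_r\Omega^2_{X,\log})\times H^3_{\mathrm{\acute{e}t}}(X,\mathbb{Z}/p^r)\to\mathbb{Z}/p^r$ and then Artin--Schreier--Witt plus Hochschild--Serre to reach the slope-$0$ part of $H^2_{\mathrm{cris}}$. The paper instead uses Illusie's coherent pairing $H^2_{\mathrm{\acute{e}t}}(X,W_r\mathcal{O}_X)\times H^0_{\mathrm{\acute{e}t}}(X,W_r\Omega^2_X)\to W_r(\mathbb{F}_q)$, passes to a limit/colimit along $F$ and $V$, and via the sequence $0\to\varinjlim_V W_r\Omega^2_{X,\log}\to\varinjlim_V W_r\Omega^2_X\xrightarrow{1-F'}\varinjlim_V W_r\Omega^2_X\to 0$ identifies the target with the Pontryagin dual of $\bigl[\varprojlim_\sigma H^2(X,W\mathcal{O}_X)\bigr]_\sigma$. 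Both routes land on the slope-$0$ part of crystalline $H^2$ modulo $\mathrm{Frob}-1$ and invoke Katz--Messing; yours is cleaner as a black-box citation, while the paper's hands-on version makes the torsion analysis visible.

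That analysis is exactly the obstacle you flag but do not resolve. The paper disposes of it concretely: the $V$-torsion in $H^2(X,W\mathcal{O}_X)$ is identified (via Illusie) with the Pontryagin dual of the Dieudonn\'e module of $\mathrm{Pic}^{\wedge}_{X/\mathbb{F}_q}/\mathrm{Pic}^{\wedge}_{X/\mathbb{F}_q,\mathrm{red}}$, and its contribution after taking $\sigma$-coinvariants and dualising is $\mathrm{Hom}_{\mathrm{grp\text{-}sch}/\mathbb{F}_q}(\mathrm{Pic}^0_{X/\mathbb{F}_q},\mathbb{G}_m)$, which is trivial because $\mathrm{Pic}^0_{X/\mathbb{F}_q}$ is projective. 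This is the missing ingredient in your sketch --- note that the hypotheses say nothing about crystalline torsion, so something geometric like this is genuinely needed. If you pursue the Milne-duality route you will have to reproduce an equivalent argument on the $H^3_{\mathrm{cont}}(X,\mathbb{Z}_p)$ side; otherwise you only get the order of the $p$-part up to an uncontrolled factor.
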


\begin{rem}
Let $k$ be a field and let $X$ be a smooth connected variety over $k$. Then it is known (see \cite[Theorem 1.6]{Kah96}) that $H^{1}_{\mathcal{M}}(X,\mathbb{Z}(2))\simeq K_{3}(k)_{\mathrm{ind}}$ is the indecomposable part of $K_{3}(k)$ (recall that the Milnor $K$-group $K^{\mathrm{Mil}}_{3}(k)$ injects into $K_{3}(k)$ and $K_{3}(k)_{\mathrm{ind}}$ is defined to be the quotient of $K_{3}(k)$ by $K_{3}^{\mathrm{Mil}}(k)$). In the case that $k=\mathbb{F}_{q}$ is a finite field, $K_{3}^{\mathrm{Mil}}(\mathbb{F}_{q})=0$ and hence $H^{1}_{\mathcal{M}}(X,\mathbb{Z}(2))\simeq K_{3}(\mathbb{F}_{q})_{\mathrm{ind}}\simeq K_{3}(\mathbb{F}_{q})\simeq\mathbb{Z}/(q^{2}-1)\mathbb{Z}$.
\end{rem}

\begin{rem}
Recall that Lichtenbaum has defined Weil-\'{e}tale cohomology groups $H^{i}_{\mathrm{W}}(X,\mathbb{Z}(n))$ for smooth varieties $X$. These groups are conjectured to be finitely generated and related to special values of zeta functions (see \cite{Lic05} and \cite{Gei04}). By \cite[Theorem 7.1(a)]{Gei04} there is a long exact sequence
\begin{equation}\label{Weil-etale sequence}
\cdots\rightarrow H^{i}_{\mathrm{L}}(X,\mathbb{Z}(n))\rightarrow H^{i}_{\mathrm{W}}(X,\mathbb{Z}(n))\rightarrow H^{i-1}_{\mathrm{L}}(X,\mathbb{Q}(n))\rightarrow H^{i+1}_{\mathrm{L}}(X,\mathbb{Z}(n))\rightarrow\cdots
\end{equation}
where $H^{i}_{\mathrm{L}}(X,A(n)):=\mathbb{H}^{i}(X_{\mathrm{\acute{e}t}},A(n))$ denotes \'{e}tale motivic cohomology with coefficients in an abelian group $A$. Recall that rationally we have $H^{i}_{\mathrm{L}}(X,\mathbb{Q}(n))\simeq H^{i}_{\mathcal{M}}(X,\mathbb{Q}(n))$, which vanishes away from the $(i,n)=(2n,n)$ diagonal under Parshin's conjecture. In particular, the long exact sequence gives $H^{i}_{\mathrm{W}}(X,\mathbb{Z}(n))\simeq H^{i}_{\mathrm{L}}(X,\mathbb{Z}(n))$ for $i\neq 2n+1,2n+2$. Now recall that a consequence of the Beilinson-Lichtenbaum conjecture (itself a consequence of the Bloch-Kato conjecture/norm residue theorem of Rost-Voevodsky) is that $H^{i}_{\mathcal{M}}(X,\mathbb{Z}(n))\cong H^{i}_{\mathrm{L}}(X,\mathbb{Z}(n))$ for $i\leq n+1$. Altogether, we have  
\begin{equation*}
H^{i}_{\mathrm{W}}(X,\mathbb{Z}(n))\simeq H^{i}_{\mathcal{M}}(X,\mathbb{Z}(n))
\end{equation*}
for $i\leq n+1$ with $(i,n)\neq (1,0)$ for those $X$ satisfying Parshin's conjecture. For the class of surfaces considered in Theorem \ref{main} our computation together with \cite[Theorem 3.1]{Lic05} and \cite[\S7]{Gei04} also gives the Weil-\'{e}tale cohomology groups of $X$ apart from the two groups $H^{3}_{\mathrm{W}}(X,\mathbb{Z}(1))$ and $H^{4}_{\mathrm{W}}(X,\mathbb{Z}(1))$. These two undetermined groups $H^{3}_{\mathrm{W}}(X,\mathbb{Z}(1))$ and $H^{4}_{\mathrm{W}}(X,\mathbb{Z}(1))$ live in the following exact sequence (coming from \eqref{Weil-etale sequence} together with $H^{3}_{\mathrm{L}}(X,\mathbb{Z}(1))\simeq\mathrm{Br}(X)$ and $H^{2}_{\mathcal{M}}(X,\mathbb{Z}(1))\simeq\mathrm{Pic}(X)$):
\begin{equation*}
0\rightarrow\mathrm{Br}(X)\rightarrow H^{3}_{\mathrm{W}}(X,\mathbb{Z}(1))\rightarrow\mathrm{NS}(X)\otimes\mathbb{Q}\rightarrow H^{3}_{\mathrm{\acute{e}t}}(X,\mathbb{Q}/\mathbb{Z}(1))\rightarrow H^{4}_{\mathrm{W}}(X,\mathbb{Z}(1))\rightarrow 0\,.
\end{equation*}
The Lichtenbaum Chow group $\mathrm{CH}^{2}_{\mathrm{L}}(X):=H^{4}_{\mathrm{L}}(X,\mathbb{Z}(2))$ has $\mathrm{CH}^{2}_{\mathrm{L}}(X)_{\mathrm{tors}}\simeq H^{3}_{\mathrm{\acute{e}t}}(X,\mathbb{Q}/\mathbb{Z}(2))$.
\end{rem}

\begin{lemma}\label{torsion in motivic cohomology lemma}
Let $X$ be a smooth separated scheme over $\mathbb{F}_{q}$. Let $i$ and $n$ be integers. If $H^{i-1}_{\mathcal{M}}(X,\mathbb{Z}(n))$ is torsion then $H^{i}_{\mathcal{M}}(X,\mathbb{Z}(n))_{\mathrm{tors}}\simeq H^{i-1}_{\mathcal{M}}(X,\mathbb{Q}/\mathbb{Z}(n))$.
\end{lemma}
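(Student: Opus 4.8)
The plan is to deduce this from the coefficient sequence $0\to\mathbb{Z}(n)\to\mathbb{Q}(n)\to\mathbb{Q}/\mathbb{Z}(n)\to 0$ of motivic complexes (equivalently, the associated distinguished triangle), exactly as one proves the analogous statement for singular or \'etale cohomology: the connecting map will identify $H^{i-1}_{\mathcal{M}}(X,\mathbb{Q}/\mathbb{Z}(n))$ with the kernel of the rationalization map on $H^{i}_{\mathcal{M}}(X,\mathbb{Z}(n))$, which is precisely the torsion subgroup.

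The one input that is not purely formal is the identification $H^{j}_{\mathcal{M}}(X,\mathbb{Q}(n))\cong H^{j}_{\mathcal{M}}(X,\mathbb{Z}(n))\otimes_{\mathbb{Z}}\mathbb{Q}$ for all $j$. I would obtain it by writing $\mathbb{Q}(n)$ as the filtered colimit of the system $\mathbb{Z}(n)\xrightarrow{2}\mathbb{Z}(n)\xrightarrow{3}\mathbb{Z}(n)\xrightarrow{4}\cdots$ and using that, $X$ being a finite-dimensional Noetherian scheme (it is of finite type over $\mathbb{F}_{q}$), Zariski (equivalently Nisnevich) hypercohomology on $X$ commutes with filtered colimits of coefficients. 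Under this isomorphism the map $H^{j}_{\mathcal{M}}(X,\mathbb{Z}(n))\to H^{j}_{\mathcal{M}}(X,\mathbb{Q}(n))$ induced by $\mathbb{Z}(n)\to\mathbb{Q}(n)$ becomes the canonical localization map $A\to A\otimes\mathbb{Q}$, whose kernel is exactly $A_{\mathrm{tors}}$.

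With that in hand the proof is a short diagram chase in the long exact sequence
\[
\cdots\to H^{i-1}_{\mathcal{M}}(X,\mathbb{Q}(n))\to H^{i-1}_{\mathcal{M}}(X,\mathbb{Q}/\mathbb{Z}(n))\xrightarrow{\ \partial\ } H^{i}_{\mathcal{M}}(X,\mathbb{Z}(n))\xrightarrow{\ \alpha\ } H^{i}_{\mathcal{M}}(X,\mathbb{Q}(n))\to\cdots\,.
\]
Since $H^{i-1}_{\mathcal{M}}(X,\mathbb{Z}(n))$ is assumed torsion, the previous paragraph gives $H^{i-1}_{\mathcal{M}}(X,\mathbb{Q}(n))=H^{i-1}_{\mathcal{M}}(X,\mathbb{Z}(n))\otimes\mathbb{Q}=0$, so $\partial$ is injective with image $\ker\alpha=H^{i}_{\mathcal{M}}(X,\mathbb{Z}(n))_{\mathrm{tors}}$, which yields the claimed isomorphism. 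I do not expect any genuine obstacle here; the only point that needs to be stated with care, rather than overcome, is the commutation of hypercohomology with the colimit defining $\mathbb{Q}(n)$ (and hence the description of $\alpha$ as the localization map).
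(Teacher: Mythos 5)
Your proof is correct and takes essentially the same approach as the paper: both rest on the coefficient triangle $\mathbb{Z}(n)\to\mathbb{Q}(n)\to\mathbb{Q}/\mathbb{Z}(n)$ together with the identification $H^{j}_{\mathcal{M}}(X,\mathbb{Q}(n))\simeq H^{j}_{\mathcal{M}}(X,\mathbb{Z}(n))\otimes_{\mathbb{Z}}\mathbb{Q}$. The paper merely packages the resulting long exact sequence as the universal-coefficient short exact sequence $0\to H^{i-1}_{\mathcal{M}}(X,\mathbb{Z}(n))\otimes_{\mathbb{Z}}\mathbb{Q}/\mathbb{Z}\to H^{i-1}_{\mathcal{M}}(X,\mathbb{Q}/\mathbb{Z}(n))\to H^{i}_{\mathcal{M}}(X,\mathbb{Z}(n))_{\mathrm{tors}}\to 0$ and kills the first term because a torsion group tensored with a divisible group vanishes, which is the same vanishing you get from $H^{i-1}_{\mathcal{M}}(X,\mathbb{Q}(n))=0$.
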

\begin{proof}
By hypothesis, the first term in the short exact sequence
\begin{equation*}
0\rightarrow H^{i-1}_{\mathcal{M}}(X,\mathbb{Z}(n))\otimes_{\mathbb{Z}}\mathbb{Q}/\mathbb{Z}\rightarrow H^{i-1}_{\mathcal{M}}(X,\mathbb{Q}/\mathbb{Z}(n))\rightarrow H^{i}_{\mathcal{M}}(X,\mathbb{Z}(n))_{\mathrm{tors}}\rightarrow 0
\end{equation*}
is a torsion abelian group tensored with a divisible group, and therefore vanishes.
\end{proof}

\begin{lemma}\label{GL}
Let $X$ be a smooth separated surface over $\mathbb{F}_{q}$. Let $i$ and $n$ be integers. If $n\neq 2$, or if $n=2$ and $i\neq 4, 5$, then we have 
\begin{equation*}
H^{i-1}_{\mathcal{M}}(X,\mathbb{Q}/\mathbb{Z}(n))\simeq\varinjlim_{r}H_{\mathrm{Zar}}^{i-n-1}(X,W_{r}\Omega^{n}_{X,\log})\oplus\bigoplus_{\ell\neq p}H^{i-1}_{\mathrm{\acute{e}t}}(X,\mathbb{Q}_{\ell}/\mathbb{Z}_{\ell}(n))
\end{equation*}
(where $p=\mathrm{char}(\mathbb{F}_{q})$).
\end{lemma}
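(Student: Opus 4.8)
The plan is to split off the $p$-primary and prime-to-$p$ contributions of the coefficient group and treat them by the two works of Geisser--Levine. Since $\mathbb{Q}/\mathbb{Z}\simeq\mathbb{Q}_{p}/\mathbb{Z}_{p}\oplus\bigoplus_{\ell\neq p}\mathbb{Q}_{\ell}/\mathbb{Z}_{\ell}$ and $\mathbb{Z}(n)$ is a complex of flat Nisnevich sheaves, we get $\mathbb{Q}/\mathbb{Z}(n)\simeq\mathbb{Q}_{p}/\mathbb{Z}_{p}(n)\oplus\bigoplus_{\ell\neq p}\mathbb{Q}_{\ell}/\mathbb{Z}_{\ell}(n)$; moreover hypercohomology on the Noetherian scheme $X$ commutes with this direct sum and with the filtered colimit $\mathbb{Q}_{\ell}/\mathbb{Z}_{\ell}(n)=\varinjlim_{r}\mathbb{Z}/\ell^{r}(n)$. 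So it suffices to identify $H^{i-1}_{\mathcal{M}}(X,\mathbb{Z}/\ell^{r}(n))$ for each prime $\ell$, naturally in $r$, and then take the colimit.

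For $\ell=p$ this is exactly the theorem of Geisser--Levine \cite{GL00}: for $X$ smooth over the perfect field $\mathbb{F}_{q}$, the mod-$p^{r}$ motivic complex $\mathbb{Z}/p^{r}(n)$ is quasi-isomorphic, in the derived category of Nisnevich (equivalently, via Gersten resolutions, Zariski) sheaves on $X$, to $W_{r}\Omega^{n}_{X,\log}[-n]$. Taking $(i-1)$-st hypercohomology gives $H^{i-1}_{\mathcal{M}}(X,\mathbb{Z}/p^{r}(n))\simeq H^{i-n-1}_{\mathrm{Zar}}(X,W_{r}\Omega^{n}_{X,\log})$, and the colimit over $r$ yields the logarithmic Hodge--Witt summand, with no restriction on $i$ or $n$.

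For $\ell\neq p$ the input is the Beilinson--Lichtenbaum property, which Geisser--Levine \cite{GL01} deduce from the Bloch--Kato conjecture (now the Voevodsky--Rost theorem): for $\epsilon\colon X_{\mathrm{\acute{e}t}}\to X_{\mathrm{Nis}}$ the change-of-topology map, the cycle class map is a quasi-isomorphism $\mathbb{Z}/\ell^{r}(n)\xrightarrow{\sim}\tau_{\leq n}R\epsilon_{*}\mu_{\ell^{r}}^{\otimes n}$. The truncation triangle then produces a long exact sequence comparing $H^{i-1}_{\mathcal{M}}(X,\mathbb{Z}/\ell^{r}(n))$ with $H^{i-1}_{\mathrm{\acute{e}t}}(X,\mu_{\ell^{r}}^{\otimes n})$, the discrepancy being governed by the hypercohomology groups $\mathbb{H}^{*}_{\mathrm{Nis}}(X,\tau_{\geq n+1}R\epsilon_{*}\mu_{\ell^{r}}^{\otimes n})$. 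The crucial point is a dimension bound on the higher direct images: by the Bloch--Ogus Gersten resolution, $R^{t}\epsilon_{*}\mu_{\ell^{r}}^{\otimes n}$ is computed by a Cousin complex whose codimension-$c$ term is built from $H^{t-c}$ of the residue fields of codimension-$c$ points of $X$; since such a residue field has transcendence degree $\leq 2-c$ over $\mathbb{F}_{q}$, hence $\ell$-cohomological dimension $\leq 3-c$, it follows that $R^{t}\epsilon_{*}\mu_{\ell^{r}}^{\otimes n}=0$ for $t\geq 4$. Therefore $\tau_{\geq n+1}R\epsilon_{*}\mu_{\ell^{r}}^{\otimes n}$ has nonzero cohomology sheaves only in degrees $n+1\leq t\leq 3$.

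Assembling the cases: if $n\geq 3$ this truncation vanishes, so $H^{i-1}_{\mathcal{M}}(X,\mathbb{Z}/\ell^{r}(n))\xrightarrow{\sim}H^{i-1}_{\mathrm{\acute{e}t}}(X,\mu_{\ell^{r}}^{\otimes n})$ for every $i$, and the colimit over $r$ supplies the \'{e}tale summand; the low-weight cases $n\leq 1$, where $\mathbb{Z}(n)$ is $\mathbb{Z}$ or $\mathbb{G}_{m}[-1]$, are of an elementary nature. The substantive case is $n=2$, where only the single sheaf $R^{3}\epsilon_{*}\mu_{\ell^{r}}^{\otimes 2}$ (the sheaf of unramified $H^{3}$) survives, and it feeds into $\mathbb{H}^{j}_{\mathrm{Nis}}$ only for $j$ in a short band around $n+1=3$; here one must check, using the truncation triangle together with the vanishing of $\mathbb{Z}/\ell^{r}(2)$-motivic cohomology outside degrees $[0,4]$ on a surface, that outside the excluded values of $i$ this error term does not disturb the isomorphism $H^{i-1}_{\mathcal{M}}(X,\mathbb{Z}/\ell^{r}(2))\simeq H^{i-1}_{\mathrm{\acute{e}t}}(X,\mu_{\ell^{r}}^{\otimes 2})$. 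I expect this last verification — controlling the unramified $H^{3}$ in the middle degrees, precisely where the comparison map leaves the Beilinson--Lichtenbaum range — to be the main obstacle; once it is in place, recombining the $p$-primary and prime-to-$p$ parts gives the stated splitting.
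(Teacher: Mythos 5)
Your route is essentially the paper's: split the coefficients into the $p$-primary part, identified via Geisser--Levine \cite{GL00} with logarithmic Hodge--Witt cohomology, and the prime-to-$p$ parts, compared with \'{e}tale cohomology via the Beilinson--Lichtenbaum quasi-isomorphism of \cite{GL01} together with the bound $R^{t}\epsilon_{*}\mu_{\ell^{r}}^{\otimes n}=0$ for $t\geq 4$ (your Cousin-complex argument is a fine substitute for the paper's appeal to $\mathrm{cd}_{\ell}\leq 3$ for affines). The genuine gap is that you stop exactly at the decisive case $n=2$: you set up the truncation triangle and then defer the verification as ``the main obstacle''. But with your own setup it is a one-line degree count, not an obstacle: the long exact sequence attached to $\tau_{\leq 2}R\epsilon_{*}\mu_{\ell^{r}}^{\otimes 2}\to R\epsilon_{*}\mu_{\ell^{r}}^{\otimes 2}\to (R^{3}\epsilon_{*}\mu_{\ell^{r}}^{\otimes 2})[-3]$ shows that the comparison map in cohomological degree $i-1$ has error terms $H^{i-5}_{\mathrm{Zar}}(X,R^{3}\epsilon_{*}\mu_{\ell^{r}}^{\otimes 2})$ and $H^{i-4}_{\mathrm{Zar}}(X,R^{3}\epsilon_{*}\mu_{\ell^{r}}^{\otimes 2})$, which vanish for $i\leq 3$ simply because they sit in negative degrees; no control of unramified $H^{3}$ is needed in that range. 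This is precisely \cite[Theorem 1.2]{GL01} (isomorphism in degrees $\leq n$, injectivity in degree $n+1$), which is what the paper cites; the unramified $H^{3}$ genuinely interferes only at $i=4,5$, the values excluded by the lemma. You should state this count rather than leave the key case open.

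Two smaller points. First, the remaining fringe case $n=2$, $i\geq 6$ is also glossed in your sketch: vanishing of motivic cohomology with $\mathbb{Z}/\ell^{r}(2)$-coefficients outside degrees $[0,4]$ disposes of the left-hand side, but the asserted isomorphism then also needs the \'{e}tale side to vanish, and for $X$ proper and geometrically connected one has $H^{5}_{\mathrm{\acute{e}t}}(X,\mathbb{Q}_{\ell}/\mathbb{Z}_{\ell}(2))\simeq\mathbb{Q}_{\ell}/\mathbb{Z}_{\ell}\neq 0$; so $(i,n)=(6,2)$ requires care (the paper's proof likewise just asserts vanishing for $i\geq 6$), though this edge case is irrelevant to the applications, where $H^{i}_{\mathcal{M}}(X,\mathbb{Z}(n))=0$ for $i>n+2$ is invoked independently. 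Second, your treatment of $n\leq 1$ as elementary matches the paper's ``clear'' and is fine.
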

\begin{proof}
For $n=0,1$ this is clear. For $i\geq 6$ the groups vanish because $X$ is a surface. In general, for the $p$-primary torsion summand we have 
\begin{equation*}
H^{i-1}_{\mathcal{M}}(X,\mathbb{Q}_{p}/\mathbb{Z}_{p}(n))\simeq\varinjlim_{r}H_{\mathrm{Zar}}^{i-n-1}(X,W_{r}\Omega^{n}_{X,\log})
\end{equation*}
by \cite[Theorem 8.4]{GL00}. Here $W_{r}\Omega^{n}_{X,\log}$ is the (abuse of) notation for $\epsilon_{\ast}W_{r}\Omega_{X,\log}^{n}$, where $\epsilon:X_{\mathrm{\acute{e}t}}\rightarrow X_{\mathrm{Zar}}$ is the change-of-topology map and $W_{r}\Omega_{X,\log}^{n}$ is the logarithmic Hodge-Witt sheaf on $X_{\mathrm{\acute{e}t}}$; it is the subsheaf of $W_{r}\Omega_{X}^{n}$ \'{e}tale locally generated by sections of the form $d\log[x_{1}]_{r}\ldots d\log[x_{n}]_{r}$, where $x_{i}\in\mathcal{O}_{X}^{\ast}$ and $[x_{i}]_{r}\in W_{r}\mathcal{O}_{X}$ is the Teichm\"{u}ller lift of $x_{i}$. By \cite[Theorem 1.1]{GL01}, the Bloch-Kato conjecture (the norm residue theorem of Rost-Voevodsky) implies that the $\ell$-primary torsion is
\begin{equation*}
H^{i-1}_{\mathcal{M}}(X,\mathbb{Q}_{\ell}/\mathbb{Z}_{\ell}(n))\simeq H^{i-1}_{\mathrm{Zar}}(X,\tau_{\leq n}R\epsilon_{\ast}\mathbb{Q}_{\ell}/\mathbb{Z}_{\ell}(n))
\end{equation*}
where $\epsilon\,:\,X_{\mathrm{\acute{e}t}}\rightarrow X_{\mathrm{Zar}}$ is the change of topology map. Since the $\ell$-cohomological dimension of an affine scheme of finite type over $\mathrm{Spec}\,\mathbb{F}_{q}$ of dimension $2$ is $\leq 3$ \cite[Tag 0F0V]{Stacks}, we have
\begin{equation*}
H^{i-1}_{\mathrm{Zar}}(X,\tau_{\leq n}R\epsilon_{\ast}\mathbb{Q}_{\ell}/\mathbb{Z}_{\ell}(n))\simeq H^{i-1}_{\mathrm{\acute{e}t}}(X,\mathbb{Q}_{\ell}/\mathbb{Z}_{\ell}(n))
\end{equation*}
for all $i$ whenever $n\geq 3$. Finally let us treat the case $n=2$. By \cite[Theorem 1.2]{GL01} we have that the cycle class map
\begin{equation*}
H^{i-1}_{\mathcal{M}}(X,\mathbb{Q}_{\ell}/\mathbb{Z}_{\ell}(2))\rightarrow  H^{i-1}_{\mathrm{\acute{e}t}}(X,\mathbb{Q}_{\ell}/\mathbb{Z}_{\ell}(2))
\end{equation*}
is an isomorphism for $i\leq 3$. 
\end{proof}

\begin{rem}
In the case $(i,n)=(4,2)$, the cycle class map 
\begin{equation*}
H^{3}_{\mathcal{M}}(X,\mathbb{Q}_{\ell}/\mathbb{Z}_{\ell}(2))\rightarrow  H^{3}_{\mathrm{\acute{e}t}}(X,\mathbb{Q}_{\ell}/\mathbb{Z}_{\ell}(2))
\end{equation*}
is injective by \cite[Theorem 1.1]{GL01}, but is not surjective in general. Indeed, there is a short exact sequence
\begin{equation*}
0\rightarrow H^{1}_{\mathrm{Zar}}(X,\mathcal{K}_{2})\otimes_{\mathbb{Z}}\mathbb{Q}_{\ell}/\mathbb{Z}_{\ell}\rightarrow N^{1}H^{3}_{\mathrm{\acute{e}t}}(X,\mathbb{Q}_{\ell}/\mathbb{Z}_{\ell}(2))\rightarrow\mathrm{CH}^{2}(X)_{\ell\mathrm{-tors}}\rightarrow 0
\end{equation*}
where the middle term denotes the first step of the coniveau filtration (see e.g. \cite[\S1]{Sai91} for a nice summary). The Weil conjectures imply that $H^{3}_{\mathrm{\acute{e}t}}(X,\mathbb{Q}_{\ell}/\mathbb{Z}_{\ell}(2))$ is a finite group, so $H^{1}_{\mathrm{Zar}}(X,\mathcal{K}_{2})\otimes_{\mathbb{Z}}\mathbb{Q}_{\ell}/\mathbb{Z}_{\ell}$ is finite and divisible, hence trivial. Thus 
\begin{equation*}
\mathrm{CH}^{2}(X)_{\ell-\mathrm{tors}}\simeq N^{1}H^{3}_{\mathrm{\acute{e}t}}(X,\mathbb{Q}_{\ell}/\mathbb{Z}_{\ell}(2))\,.
\end{equation*}
If $H^{2}_{\mathcal{M}}(X,\mathbb{Z}(2))$ is torsion (as predicted by e.g. Parshin's conjecture), then the $\ell$-primary part of the exact sequence in Lemma \ref{torsion in motivic cohomology lemma} shows that $\mathrm{CH}^{2}(X)_{\ell\mathrm{-tors}}\simeq H^{4}_\mathcal{M}(X,\mathbb{Z}(2))_{\ell\mathrm{-tors}}\simeq H^{3}_{\mathcal{M}}(X,\mathbb{Q}_{\ell}/\mathbb{Z}_{\ell}(2))$. Consequently, if $H^{2}_{\mathcal{M}}(X,\mathbb{Z}(2))$ is torsion, then 
\begin{equation*}
H^{3}_{\mathcal{M}}(X,\mathbb{Q}_{\ell}/\mathbb{Z}_{\ell}(2))\simeq N^{1}H^{3}_{\mathrm{\acute{e}t}}(X,\mathbb{Q}_{\ell}/\mathbb{Z}_{\ell}(2))\,. 
\end{equation*}
\end{rem}

\begin{flushleft}\emph{Proof of 
Theorem \ref{main}.} The rows for $n=0,1$ are standard \cite[Corollary 4.2]{MVW06}. The entries along the line $i=2n$ are by the comparison with Chow groups \cite[Corollary 19.2]{MVW06}. Recall also that $H^{i}_{\mathcal{M}}(X,\mathbb{Z}(n))=0$ for $i>n+2$ and for $i>2n$ \cite[Theorem 3.6, Theorem 19.3]{MVW06}. Bloch's formula \cite{Blo86} says that $\mathrm{CH}^{n}(X,m)\simeq H^{2n-m}_{\mathcal{M}}(X,\mathbb{Z}(n))$ rationally agrees with the weight $n$ graded piece of $K$-theory:
\begin{equation*}
K_{m}(X)^{(n)}_{\mathbb{Q}}\simeq H^{2n-m}_{\mathcal{M}}(X,\mathbb{Q}(n))\,.
\end{equation*}
In particular, Parshin's conjecture for $X$ is equivalent to the statement that $H^{j}_{\mathcal{M}}(X,\mathbb{Z}(n))$ is torsion for all $j\neq 2n$. Suppose that $i-1\neq 2n$. By Lemma \ref{torsion in motivic cohomology lemma} we have \end{flushleft}
\begin{equation*}
H^{i}_{\mathcal{M}}(X,\mathbb{Z}(n))=H^{i}_{\mathcal{M}}(X,\mathbb{Z}(n))_{\mathrm{tors}}\simeq H^{i-1}_{\mathcal{M}}(X,\mathbb{Q}/\mathbb{Z}(n))\,.
\end{equation*}
Lemma \ref{GL} then gives that
\begin{equation*}
H^{i}_{\mathcal{M}}(X,\mathbb{Z}(n))\simeq \varinjlim_{r}H_{\mathrm{Zar}}^{i-n-1}(X,W_{r}\Omega^{n}_{X,\log})\oplus\bigoplus_{\ell\neq p}H^{i-1}_{\mathrm{\acute{e}t}}(X,\mathbb{Q}_{\ell}/\mathbb{Z}_{\ell}(n))
\end{equation*}
for all $(i,n)\neq (5,2)$. For $n\geq 3$ or $i\leq n$ we have $H_{\mathrm{Zar}}^{i-n-1}(X,W_{r}\Omega^{n}_{X,\log})=0$ and hence $H^{i}_{\mathcal{M}}(X,\mathbb{Z}(n))$ has no $p$-torsion in this range. We also see that $H^{0}_{\mathcal{M}}(X,\mathbb{Z}(n))$ has no $\ell$-torsion. Now consider the short exact sequence
\begin{equation*}
0\rightarrow H^{i-1}_{\mathrm{\acute{e}t}}(X,\mathbb{Z}_{\ell}(n))\otimes_{\mathbb{Z}}\mathbb{Q}_{\ell}/\mathbb{Z}_{\ell}\rightarrow H^{i-1}_{\mathrm{\acute{e}t}}(X,\mathbb{Q}_{\ell}/\mathbb{Z}_{\ell}(n))\rightarrow H^{i}_{\mathrm{\acute{e}t}}(X,\mathbb{Z}_{\ell}(n))_{\mathrm{tors}}\rightarrow 0\,.
\end{equation*} 
Since $i-1\neq 2n$, the Weil conjectures imply that $H^{i-1}_{\mathrm{\acute{e}t}}(X,\mathbb{Z}_{\ell}(n))$ and $H^{i}_{\mathrm{\acute{e}t}}(X,\mathbb{Z}_{\ell}(n))$ are torsion \cite[p. 781]{CTSS83}. Thus the first term vanishes and we conclude that the $\ell$-primary torsion summand of $H^{i}_{\mathcal{M}}(X,\mathbb{Z}(n))$ is 
\begin{equation}\label{l-torsion}
H^{i}_{\mathcal{M}}(X,\mathbb{Z}(n))_{\ell\mathrm{-tors}}\simeq H^{i-1}_{\mathrm{\acute{e}t}}(X,\mathbb{Q}_{\ell}/\mathbb{Z}_{\ell}(n))\simeq H^{i}_{\mathrm{\acute{e}t}}(X,\mathbb{Z}_{\ell}(n))\,.
\end{equation}
Since the $\mathbb{F}_{q}$ has cohomological dimension $1$, the Hochschild-Serre spectral sequence
\begin{equation*}
E_{2}^{i,j}=H^{i}(\Gamma,H^{j}_{\mathrm{\acute{e}t}}(X_{\overline{\mathbb{F}}_{q}},\mathbb{Z}_{\ell}(n)))\Rightarrow H^{i+j}_{\mathrm{\acute{e}t}}(X,\mathbb{Z}_{\ell}(n))
\end{equation*}
gives short exact sequences
\begin{equation}\label{HS sequence}
0\rightarrow {H^{i-1}_{\mathrm{\acute{e}t}}(X_{\overline{\mathbb{F}}_{q}},\mathbb{Z}_{\ell}(n))}_{\Gamma}\rightarrow H^{i}_{\mathrm{\acute{e}t}}(X,\mathbb{Z}_{\ell}(n))\rightarrow H^{i}_{\mathrm{\acute{e}t}}(X_{\overline{\mathbb{F}}_{q}},\mathbb{Z}_{\ell}(n))^{\Gamma}\rightarrow 0\,.
\end{equation}
We see then that $H^{i}_{\mathrm{\acute{e}t}}(X,\mathbb{Z}_{\ell}(n))=0$ for $i\geq 6$ because $X_{\overline{\mathbb{F}}_{q}}$ is a surface, and hence $H^{i}_{\mathcal{M}}(X,\mathbb{Z}(n))_{\ell\mathrm{-tors}}=0$ for $i\geq 6$ by \eqref{l-torsion}. By the Weil conjectures, the groups $H^{i}_{\mathrm{\acute{e}t}}(X_{\overline{\mathbb{F}}_{q}},\mathbb{Z}_{\ell}(n))^{\Gamma}$ are torsion for $i\neq 2n$ \cite[p. 781]{CTSS83}. But by hypothesis, $H^{i}_{\mathrm{\acute{e}t}}(X_{\overline{\mathbb{F}}_{q}},\mathbb{Z}_{\ell})$ and hence $H^{i}_{\mathrm{\acute{e}t}}(X_{\overline{\mathbb{F}}_{q}},\mathbb{Z}_{\ell}(n))^{\Gamma}$ are torsion-free for all $(i,n)$. So we conclude that $H^{i}_{\mathrm{\acute{e}t}}(X_{\overline{\mathbb{F}}_{q}},\mathbb{Z}_{\ell}(n))^{\Gamma}=0$ for all $i\neq 2n$, and hence \eqref{HS sequence} and \eqref{l-torsion} give
\begin{equation*}
H^{i}_{\mathcal{M}}(X,\mathbb{Z}(n))_{\ell\mathrm{-tors}}\simeq {H^{i-1}_{\mathrm{\acute{e}t}}(X_{\overline{\mathbb{F}}_{q}},\mathbb{Z}_{\ell}(n))}_{\Gamma}
\end{equation*} 
for all $i-1\neq 2n,2n-1$. Since $H^{0}_{\mathrm{\acute{e}t}}(X_{\overline{\mathbb{F}}_{q}},\mathbb{Z}_{\ell}(n))\simeq\mathbb{Z}_{\ell}(n)$ and $H^{4}_{\mathrm{\acute{e}t}}(X_{\overline{\mathbb{F}}_{q}},\mathbb{Z}_{\ell}(n))\simeq\mathbb{Z}_{\ell}(n-2)$, we get $H^{1}_{\mathcal{M}}(X,\mathbb{Z}(n))_{\ell\mathrm{-tors}}\simeq\mathbb{Z}_{\ell}(n)_{\Gamma}\simeq\mathbb{Z}_{\ell}/(q^{n}-1)\mathbb{Z}_{\ell}$ for all $n\geq 0$ and $H^{5}_{\mathcal{M}}(X,\mathbb{Z}(n))_{\ell\mathrm{-tors}}\simeq\mathbb{Z}_{\ell}(n-2)_{\Gamma}\simeq\mathbb{Z}_{\ell}/(q^{n-2}-1)\mathbb{Z}_{\ell}$ for all $n\geq 3$. Summing over $\ell$-primary summands and using the Chinese Remainder Theorem gives $H^{1}_{\mathcal{M}}(X,\mathbb{Z}(n))\simeq\mathbb{Z}/(q^{n}-1)\mathbb{Z}$ for $n\geq 0$ and $H^{5}_{\mathcal{M}}(X,\mathbb{Z}(n))\simeq\mathbb{Z}/(q^{n-2}-1)\mathbb{Z}$ for $n\geq 3$. Note that ${H^{i-1}_{\mathrm{\acute{e}t}}(X_{\overline{\mathbb{F}}_{q}},\mathbb{Z}_{\ell}(n))}_{\Gamma}$ is a finite group for $i-1\neq n$ by the Weil conjectures (see e.g. \cite[p. 781]{CTSS83}), and the size of ${H^{i-1}_{\mathrm{\acute{e}t}}(X_{\overline{\mathbb{F}}_{q}},\mathbb{Z}_{\ell}(n))}_{\Gamma}$ is $|P_{i-1}(q^{n})|_{\ell}$ (see e.g. \cite[Lemma 3.2]{BN78} or \cite[p. 782]{CTSS83}). 

Finally let us treat the $p$-primary torsion component of $H^{3}_{\mathcal{M}}(X,\mathbb{Z}(2))$. This discussion is well-known - for a very nice write-up see \cite[Appendix]{KY18}. We give a brief summary. As we have seen, $H^{3}_{\mathcal{M}}(X,\mathbb{Z}(2))_{p\mathrm{-tors}}\simeq\varinjlim_{r}H^{0}_{\mathrm{Zar}}(X,W_{r}\Omega^{2}_{X,\log})$ where the direct limit is over the maps $\underline{p}:W_{r}\Omega_{X}^{2}\rightarrow W_{r+1}\Omega^{2}_{X}$ \cite[Ch. I, Proposition 3.4]{Ill79}. Note that each $W_{r}\Omega_{X}^{n}$ is a quasi-coherent $W_{r}\mathcal{O}_{X}$-module \cite[Ch. I Proposition 1.13.1]{Ill79}, so the Zariski cohomology of $W_{r}\Omega_{X}^{n}$ agrees with the \'{e}tale cohomology of the associated sheaf on $X_{\mathrm{\acute{e}t}}$.  Also note that for all $n\geq 0$, the map $H^{0}_{\mathrm{Zar}}(X,W_{r}\Omega^{n}_{X,\log})\rightarrow H^{0}_{\mathrm{\acute{e}t}}(X,W_{r}\Omega^{n}_{X,\log})$ induced by $\epsilon:X_{\mathrm{\acute{e}t}}\rightarrow X_{\mathrm{Zar}}$ is an isomorphism. In particular, we may work with the \'{e}tale cohomology groups of the $W_{r}\Omega^{n}_{X,\log}$ in what follows. Notice that if $x$ is an \'{e}tale local section of $W_{r}\Omega^{2}_{X,\log}$ then there exists an \'{e}tale local section $y$ of $W_{r+1}\Omega_{X,\log}^{2}$ with $x=Ry=Fy$. Hence $Vx=VFy=py=\underline{p}x$ and we see that $\varinjlim_{r}H^{0}_{\mathrm{\acute{e}t}}(X,W_{r}\Omega^{2}_{X,\log})\simeq\varinjlim_{V}H^{0}_{\mathrm{\acute{e}t}}(X,W_{r}\Omega^{2}_{X,\log})$. The Weil conjectures imply that $H^{3}_{\mathcal{M}}(X,\mathbb{Z}(2))_{p\mathrm{-tors}}\simeq\varinjlim_{r}H^{0}_{\mathrm{\acute{e}t}}(X,W_{r}\Omega^{2}_{X,\log})$ is a finite group \cite[Corollaire 4.23]{GS88}. By \cite[Corollary 4.2.2]{Ill83}, for each $r\geq 1$ the multiplication map $W_{r}\mathcal{O}_{X}\times W_{r}\Omega_{X}^{2}\rightarrow W_{r}\Omega_{X}^{2}$ induces a perfect $W_{r}(\mathbb{F}_{q})$-bilinear pairing 
\begin{equation*}
H_{\mathrm{\acute{e}t}}^{2}(X,W_{r}\mathcal{O}_{X})\times H_{\mathrm{\acute{e}t}}^{0}(X,W_{r}\Omega_{X}^{2})\xrightarrow{\cup} H_{\mathrm{\acute{e}t}}^{2}(X,W_{r}\Omega_{X}^{2})\xrightarrow{\sim}W_{r}(\mathbb{F}_{q})\,.
\end{equation*}
Since $xVy=V(Fx.y)$, taking limits gives a perfect pairing 
\begin{equation*}
\varprojlim_{F}H_{\mathrm{\acute{e}t}}^{2}(X,W_{r}\mathcal{O}_{X})\times \varinjlim_{V}H_{\mathrm{\acute{e}t}}^{0}(X,W_{r}\Omega_{X}^{2})\xrightarrow{\cup}\varinjlim_{V}W_{r}(\mathbb{F}_{q})\simeq W(\mathbb{F}_{q})[1/p]/W(\mathbb{F}_{q})\,.
\end{equation*}
Now, by \cite[A.3]{KY18} there is a short exact sequence\begin{equation}\label{KY sequence}
0\rightarrow\varinjlim_{V}W_{r}\Omega_{X,\log}^{2}\rightarrow\varinjlim_{V}W_{r}\Omega_{X}^{2}\xrightarrow{1-F'}\varinjlim_{V}W_{r}\Omega_{X}^{2}\rightarrow 0
\end{equation}
of sheaves on $X_{\mathrm{\acute{e}t}}$. Here the modified Frobenius operator $F'$ is defined as follows: the Frobenius and restriction homomorphisms $F,R:W_{r+1}\Omega_{X}^{2}\rightarrow W_{r}\Omega_{X}^{2}$ factor through the surjection $W_{r+1}\Omega_{X}^{2}\twoheadrightarrow W_{r+1}\Omega_{X}^{2}/V^{n}\Omega_{X}^{2}$. Let $\tilde{F},\tilde{R}:W_{r+1}\Omega_{X}^{2}/V^{n}\Omega_{X}^{2}\twoheadrightarrow W_{r}\Omega_{X}^{2}$ be the induced (surjective) homomorphisms. By \cite[A.7]{KY18} the maps $\tilde{R}$ induce an isomorphism $\varinjlim_{V}(W_{r+1}\Omega_{X}^{2}/V^{n}\Omega_{X}^{2})\simeq\varinjlim_{V}W_{r}\Omega_{X}^{2}$, and the map $F'$ appearing in \eqref{KY sequence} is the inductive limit of the $\tilde{F}$ under this isomorphism. Now, the map $F:W_{r+1}\mathcal{O}_{X}\rightarrow W_{r}\mathcal{O}_{X}$ factors as $W_{r+1}\mathcal{O}_{X}\xrightarrow{\sigma}W_{r+1}\mathcal{O}_{X}\xrightarrow{R}W_{r}\mathcal{O}_{X}$ where $\sigma$ is the Witt vector Frobenius, and since $H^{2}_{\mathrm{\acute{e}t}}(X,W\mathcal{O}_{X})\simeq\varprojlim_{R}H^{2}_{\mathrm{\acute{e}t}}(X,W_{r}\mathcal{O}_{X})$ we find that $\varprojlim_{F}H_{\mathrm{\acute{e}t}}^{2}(X,W_{r}\mathcal{O}_{X})\simeq 
\varprojlim_{\sigma}H_{\mathrm{\acute{e}t}}^{2}(X,W\mathcal{O}_{X})$. Thus we have shown that $\varinjlim_{V}H_{\mathrm{\acute{e}t}}^{0}(X,W_{r}\Omega_{X}^{2})$ is the Pontryagin dual of $\varprojlim_{\sigma}H_{\mathrm{\acute{e}t}}^{2}(X,W\mathcal{O}_{X})$. Under the isomorphism $\varprojlim_{F}H_{\mathrm{\acute{e}t}}^{2}(X,W_{r}\mathcal{O}_{X})\simeq 
\varprojlim_{\sigma}H_{\mathrm{\acute{e}t}}^{2}(X,W\mathcal{O}_{X})$, the Pontryagin dual of the map $1-F'$ corresponds to the map sending a homomorphism $f:\varprojlim_{\sigma}H_{\mathrm{\acute{e}t}}^{2}(X,W\mathcal{O}_{X})\rightarrow W(\mathbb{F}_{q})[1/p]/W(\mathbb{F}_{q})$ to $f-\sigma\circ f\circ\sigma^{-1}$ (where $\sigma:\varprojlim_{\sigma}H_{\mathrm{\acute{e}t}}^{2}(X,W\mathcal{O}_{X})\rightarrow\varprojlim_{\sigma}H_{\mathrm{\acute{e}t}}^{2}(X,W\mathcal{O}_{X})$ is the endomorphism induced by $\sigma$ on $H_{\mathrm{\acute{e}t}}^{2}(X,W\mathcal{O}_{X})$). Overall, we have shown that 
\begin{equation*}
H^{3}_{\mathcal{M}}(X,\mathbb{Z}(2))_{p-\mathrm{tors}}\simeq\mathrm{Hom}_{W(\mathbb{F}_{q})}\left(\left[\varprojlim_{\sigma}H^{2}_{\mathrm{\acute{e}t}}(X,W\mathcal{O}_{X})\right]_{\sigma},W(\mathbb{F}_{q})[1/p]/W(\mathbb{F}_{q})\right)
\end{equation*}
where 
\begin{equation*}
\left[\varprojlim_{\sigma}H^{2}_{\mathrm{\acute{e}t}}(X,W\mathcal{O}_{X})\right]_{\sigma}:=\mathrm{coker}\left(\varprojlim_{\sigma}H^{2}_{\mathrm{\acute{e}t}}(X,W\mathcal{O}_{X})\xrightarrow{1-\sigma}\varprojlim_{\sigma}H^{2}_{\mathrm{\acute{e}t}}(X,W\mathcal{O}_{X})\right)\,.
\end{equation*}
We claim that $\left[\varprojlim_{\sigma}H^{2}_{\mathrm{\acute{e}t}}(X,W\mathcal{O}_{X})\right]_{\sigma}$ is finite. Indeed, it is a torsion group because $(\varprojlim_{\sigma}H^{2}_{\mathrm{\acute{e}t}}(X,W\mathcal{O}_{X}))\otimes_{\mathbb{Z}_{p}}\mathbb{Q}_{p}$ is isomorphic to the slope-zero part of $H^{2}_{\mathrm{cris}}(X/W(\mathbb{F}_{q}))\otimes_{\mathbb{Z}_{p}}\mathbb{Q}_{p}$, and hence $\left[\varprojlim_{\sigma}H^{2}_{\mathrm{\acute{e}t}}(X,W\mathcal{O}_{X})\right]_{\sigma}\otimes_{\mathbb{Z}_{p}}\mathbb{Q}_{p}=0$ (taking the cokernel of $1-\sigma$ commutes with tensoring with $\mathbb{Q}_{p}$ because $\mathbb{Q}_{p}$ is flat over $\mathbb{Z}_{p}$). But the torsion in $\varprojlim_{\sigma}H^{2}_{\mathrm{\acute{e}t}}(X,W\mathcal{O}_{X})$ injects into $H^{2}_{\mathrm{\acute{e}t}}(X,W\mathcal{O}_{X})_{V-\mathrm{tors}}$. By \cite[Ch. II Remarque 6.4]{Ill79}, $H^{2}_{\mathrm{\acute{e}t}}(X,W\mathcal{O}_{X})_{V-\mathrm{tors}}$ is canonically identified with the Pontryagin dual of the contravariant Dieudonn\'{e} module of $\mathrm{Pic}^{\wedge}_{X/\mathbb{F}_{q}}/\mathrm{Pic}^{\wedge}_{X/\mathbb{F}_{q},\mathrm{red}}$, and is thus a finite group. Finally, by Dieudonn\'{e} theory, we have that (the Pontryagin dual of) $\left[\varprojlim_{\sigma}H^{2}_{\mathrm{\acute{e}t}}(X,W\mathcal{O}_{X})_{\mathrm{tors}}\right]_{\sigma}$ is isomorphic to $\mathrm{Hom}_{\mathrm{grp-sch}/\mathbb{F}_{q}}(\mathrm{Pic}^{0}_{X/\mathbb{F}_{q}},\mathbb{G}_{m})$, the group of group scheme homomorphisms $\mathrm{Pic}^{0}_{X/\mathbb{F}_{q}}\rightarrow\mathbb{G}_{m}$ over $\mathbb{F}_{q}$. Thus
\begin{equation}
\#H^{3}_{\mathcal{M}}(X,\mathbb{Z}(2))_{p-\mathrm{tors}}=\#\mathrm{Hom}_{\mathrm{grp-sch}/\mathbb{F}_{q}}(\mathrm{Pic}^{0}_{X/\mathbb{F}_{q}},\mathbb{G}_{m})\cdot |P_{2}(q^{2})|^{-1}_{p}
\end{equation}
where we have used that the characteristic polynomial of Frobenius on $H^{2}_{\mathrm{cris}}(X/W(\mathbb{F}_{q}))$ is equal to $P_{2}(T)$ by \cite{KM74}. But $\mathrm{Pic}^{0}_{X/\mathbb{F}_{q}}$ is projective under our hypotheses, so the only scheme morphisms $\mathrm{Pic}^{0}_{X/\mathbb{F}_{q}}\rightarrow\mathbb{G}_{m}$ are constant morphisms, and thus there is only one group scheme homomorphism $\mathrm{Pic}^{0}_{X/\mathbb{F}_{q}}\rightarrow\mathbb{G}_{m}$. 
\begin{flushright}
$\square$
\end{flushright}

\begin{rem}\label{no p-torsion}
If $H^{0}(X,\Omega^{2}_{X})=0$ then $H^{0}(X,W_{r}\Omega_{X,\log}^{2})=0$. Indeed, induction using $W_{r}\Omega^{2}_{X,\log}\simeq W_{\bigcdot}\Omega^{2}_{X,\log}\otimes^{L}\mathbb{Z}/p^{r}\mathbb{Z}$ \cite[Ch. I Corollaire 5.7.5]{Ill79} reduces us to showing that $H^{0}(X,\Omega_{X,\log}^{2})=0$, where $\Omega^{2}_{X,\log}:=W_{1}\Omega^{2}_{X,\log}$. But $H^{0}(X,\Omega_{X,\log}^{2})\hookrightarrow H^{0}(X,Z\Omega_{X}^{2})=H^{0}(X,\Omega_{X}^{2})$ by \cite[Ch. 0 Remarque 2.5.2]{Ill79}.
\end{rem}

Recall that a smooth projective variety $X$ over an algebraically closed field $k$ of characteristic $p>0$ is called Shioda-supersingular \cite{Shi74} if its Picard number $\rho(X):=\mathrm{rank}\,\mathrm{NS}(X)$ is equal to $b_{2}:=\mathrm{dim}\, H^{2}(X,\mathbb{Q}_{\ell})$. For example, any unirational surface is Shioda-supersingular \cite[Corollary 2]{Shi74}. More generally, any smooth projective rationally chain connected variety is Shioda-supersingular \cite[Theorem 1.2]{GJ18}. We shall say that a smooth projective variety $X$ over $\mathbb{F}_{q}$ is Shioda-supersingular if $X_{\overline{\mathbb{F}}_{q}}$ is Shioda-supersingular.

\begin{rem}\label{Shioda vs super}
Recall that a surface $X$ is called supersingular if the Newton polygon of the $F$-crystal $H^{2}_{\mathrm{cris}}(X/W(k))$ is isoclinic. Being Shioda-supersingular is equivalent to $\mathrm{NS}(X)\otimes_{\mathbb{Z}}W(k)[1/p]\rightarrow H^{2}_{\mathrm{cris}}(X/W(k))_{\mathbb{Q}}$ being a bijection, and hence the Frobenius on $H^{2}_{\mathrm{cris}}(X/W(k))$ acts as multiplication-by-$p$ for $X$ Shioda-supersingular. Therefore Shioda-supersingular surfaces are supersingular. The converse is true if $X$ satisfies the Tate conjecture for divisors together with the variational Tate conjecture for divisors (proven by Morrow \cite[Theorem 0.2]{Mor19}).
\end{rem}

\begin{cor}\label{supersingular}
Let $X$ be a smooth projective Shioda-supersingular surface over $\mathbb{F}_{q}$ such that $H^{3}_{\mathrm{\acute{e}t}}(X_{\overline{\mathbb{F}}_{q}},\mathbb{Z}_{\ell})$ is a free $\mathbb{Z}_{\ell}$-module. Let $n\geq 3$ and suppose that $H^{2}_{\mathcal{M}}(X,\mathbb{Z}(n))$ and $H^{3}_{\mathcal{M}}(X,\mathbb{Z}(n))$ are torsion (e.g. if $X$ satisfies Parshin's conjecture). Then 
\begin{equation*}
H^{3}_{\mathcal{M}}(X,\mathbb{Z}(n))\simeq\bigoplus_{\ell\neq p}{(\mathrm{NS}(X_{\overline{\mathbb{F}}_{q}})\otimes\mathbb{Z}_{\ell})(n-1)}_{\Gamma}\,.
\end{equation*}
When $n=2$ we have
\begin{equation*}
H^{3}_{\mathcal{M}}(X,\mathbb{Z}(2))\simeq\displaystyle\varinjlim_{r}H^{0}_{\mathrm{Zar}}(X,W_{r}\Omega_{X,\log}^{2})\oplus\bigoplus_{\ell\neq p}{(\mathrm{NS}(X_{\overline{\mathbb{F}}_{q}})\otimes\mathbb{Z}_{\ell})(1)}_{\Gamma}\,.
\end{equation*}
\end{cor}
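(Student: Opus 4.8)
The plan is to specialize the proof of Theorem \ref{main} to the bidegree $(i,n)=(3,n)$, with one modification: the step in which the freeness of $H^{2}_{\mathrm{\acute{e}t}}(X_{\overline{\mathbb{F}}_{q}},\mathbb{Z}_{\ell})$ was used will be replaced by an input from Shioda-supersingularity, since here only $H^{3}$ (and not $H^{2}$) is assumed torsion-free. First, as $H^{2}_{\mathcal{M}}(X,\mathbb{Z}(n))$ is torsion by hypothesis and $i=3$ avoids the values $i=4,5$ excluded in Lemma \ref{GL} when $n=2$, Lemmas \ref{torsion in motivic cohomology lemma} and \ref{GL} give, exactly as in the proof of Theorem \ref{main},
\[
H^{3}_{\mathcal{M}}(X,\mathbb{Z}(n))\simeq\varinjlim_{r}H^{2-n}_{\mathrm{Zar}}(X,W_{r}\Omega^{n}_{X,\log})\oplus\bigoplus_{\ell\neq p}H^{2}_{\mathrm{\acute{e}t}}(X,\mathbb{Q}_{\ell}/\mathbb{Z}_{\ell}(n))\,.
\]
For $n\geq 3$ the first summand vanishes because $2-n<0$; for $n=2$ it is exactly the asserted $p$-primary term $\varinjlim_{r}H^{0}_{\mathrm{Zar}}(X,W_{r}\Omega^{2}_{X,\log})$.

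It then remains to compute the prime-to-$p$ part. I would use \eqref{l-torsion} (which applies because $3-1=2\neq 2n$) to identify $H^{2}_{\mathrm{\acute{e}t}}(X,\mathbb{Q}_{\ell}/\mathbb{Z}_{\ell}(n))\simeq H^{3}_{\mathrm{\acute{e}t}}(X,\mathbb{Z}_{\ell}(n))$, and then the Hochschild--Serre sequence \eqref{HS sequence} at $i=3$:
\[
0\rightarrow H^{2}_{\mathrm{\acute{e}t}}(X_{\overline{\mathbb{F}}_{q}},\mathbb{Z}_{\ell}(n))_{\Gamma}\rightarrow H^{3}_{\mathrm{\acute{e}t}}(X,\mathbb{Z}_{\ell}(n))\rightarrow H^{3}_{\mathrm{\acute{e}t}}(X_{\overline{\mathbb{F}}_{q}},\mathbb{Z}_{\ell}(n))^{\Gamma}\rightarrow 0\,.
\]
The right-hand term is torsion by the Weil conjectures (since $3\neq 2n$) and torsion-free by the freeness hypothesis on $H^{3}_{\mathrm{\acute{e}t}}(X_{\overline{\mathbb{F}}_{q}},\mathbb{Z}_{\ell})$, hence vanishes; so $H^{3}_{\mathrm{\acute{e}t}}(X,\mathbb{Z}_{\ell}(n))\simeq H^{2}_{\mathrm{\acute{e}t}}(X_{\overline{\mathbb{F}}_{q}},\mathbb{Z}_{\ell}(n))_{\Gamma}$.

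The only genuinely new ingredient is the identification of $H^{2}_{\mathrm{\acute{e}t}}(X_{\overline{\mathbb{F}}_{q}},\mathbb{Z}_{\ell}(1))$ with $\mathrm{NS}(X_{\overline{\mathbb{F}}_{q}})\otimes\mathbb{Z}_{\ell}$ as a $\Gamma$-module. For this I would take the inverse limit over $r$ of the $\Gamma$-equivariant Kummer sequences $0\to\mathrm{Pic}(X_{\overline{\mathbb{F}}_{q}})/\ell^{r}\to H^{2}_{\mathrm{\acute{e}t}}(X_{\overline{\mathbb{F}}_{q}},\mu_{\ell^{r}})\to\mathrm{Br}(X_{\overline{\mathbb{F}}_{q}})[\ell^{r}]\to 0$; since $\mathrm{Pic}^{0}(X_{\overline{\mathbb{F}}_{q}})$ is divisible we have $\mathrm{Pic}/\ell^{r}=\mathrm{NS}/\ell^{r}$, whose transition maps are surjective, so the relevant $\varprojlim^{1}$ vanishes and one obtains a $\Gamma$-equivariant exact sequence
\[
0\rightarrow\mathrm{NS}(X_{\overline{\mathbb{F}}_{q}})\otimes\mathbb{Z}_{\ell}\rightarrow H^{2}_{\mathrm{\acute{e}t}}(X_{\overline{\mathbb{F}}_{q}},\mathbb{Z}_{\ell}(1))\rightarrow T_{\ell}\mathrm{Br}(X_{\overline{\mathbb{F}}_{q}})\rightarrow 0\,.
\]
The third term is finitely generated (a quotient of a finitely generated $\mathbb{Z}_{\ell}$-module) and torsion-free (a Tate module), of $\mathbb{Z}_{\ell}$-rank $b_{2}-\rho(X_{\overline{\mathbb{F}}_{q}})$, which is $0$ precisely because $X$ is Shioda-supersingular; hence $T_{\ell}\mathrm{Br}(X_{\overline{\mathbb{F}}_{q}})=0$ and the cycle class map $\mathrm{NS}(X_{\overline{\mathbb{F}}_{q}})\otimes\mathbb{Z}_{\ell}\xrightarrow{\sim}H^{2}_{\mathrm{\acute{e}t}}(X_{\overline{\mathbb{F}}_{q}},\mathbb{Z}_{\ell}(1))$. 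Twisting by $n-1$, taking $\Gamma$-coinvariants, summing over $\ell\neq p$, and appending the $p$-primary term identified above then yields both formulae.

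The hard part — really just careful bookkeeping — will be this last step: checking the $\Gamma$-equivariance and Mittag--Leffler property of the Kummer tower, and the elementary observation that a finitely generated torsion-free $\mathbb{Z}_{\ell}$-module of rank $b_{2}-\rho(X_{\overline{\mathbb{F}}_{q}})=0$ is zero. Everything before it is a verbatim specialization of the proof of Theorem \ref{main} at $(i,n)=(3,n)$; the freeness of $H^{2}_{\mathrm{\acute{e}t}}(X_{\overline{\mathbb{F}}_{q}},\mathbb{Z}_{\ell})$ is no longer needed because Shioda-supersingularity supplies the $\Gamma$-module structure of $H^{2}_{\mathrm{\acute{e}t}}(X_{\overline{\mathbb{F}}_{q}},\mathbb{Z}_{\ell}(1))$ directly.
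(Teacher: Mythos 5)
Your proposal is correct and follows essentially the same route as the paper: specialize the proof of Theorem \ref{main} at $(i,n)=(3,n)$ to get $H^{3}_{\mathcal{M}}(X,\mathbb{Z}(n))_{\ell\mathrm{-tors}}\simeq H^{2}_{\mathrm{\acute{e}t}}(X_{\overline{\mathbb{F}}_{q}},\mathbb{Z}_{\ell}(n))_{\Gamma}$ (using the freeness of $H^{3}_{\mathrm{\acute{e}t}}$ in the Hochschild--Serre step) together with the $p$-primary term, and then identify $H^{2}_{\mathrm{\acute{e}t}}(X_{\overline{\mathbb{F}}_{q}},\mathbb{Z}_{\ell}(1))\simeq\mathrm{NS}(X_{\overline{\mathbb{F}}_{q}})\otimes\mathbb{Z}_{\ell}$ from Shioda-supersingularity via the torsion-freeness of $T_{\ell}\mathrm{Br}(X_{\overline{\mathbb{F}}_{q}})$ and equality of ranks. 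Your Kummer-sequence derivation is just a spelled-out version of the cycle-class-map observation the paper states directly, so no substantive difference.
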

\begin{proof}
Arguing just as in the proof of Theorem \ref{main}, we have $H^{3}_{\mathcal{M}}(X,\mathbb{Z}(n))_{\ell\mathrm{-tors}}\simeq {H^{2}_{\mathrm{\acute{e}t}}(X_{\overline{\mathbb{F}}_{q}},\mathbb{Z}_{\ell}(n))}_{\Gamma}$ for $n\geq 2$, and $H^{3}_{\mathcal{M}}(X,\mathbb{Z}(n))_{p\mathrm{-tors}}=0$ for $n\geq 3$. Now observe that the injective map 
\begin{equation*}
\mathrm{NS}(X_{\overline{\mathbb{F}}_{q}})\otimes\mathbb{Z}_{\ell}\hookrightarrow H^{2}_{\mathrm{\acute{e}t}}(X_{\overline{\mathbb{F}}_{q}},\mathbb{Z}_{\ell}(1))
\end{equation*}
is an isomorphism because the cokernel $T_{\ell}\mathrm{Br}(X_{\overline{\mathbb{F}}_{q}})$ is torsion-free and the source and target have the same rank by definition of Shioda-supersingularity, and thus $H^{2}_{\mathrm{\acute{e}t}}(X_{\overline{\mathbb{F}}_{q}},\mathbb{Z}_{\ell}(n))\simeq(\mathrm{NS}(X_{\overline{\mathbb{F}}_{q}})\otimes\mathbb{Z}_{\ell})(n-1)$.
\end{proof}

\section{The $K$-theory of some surfaces over $\mathbb{F}_{q}$}\label{K-theory section}

\begin{prop}\label{K0}
Let $X$ be a K3 surface over a finite field $\mathbb{F}_{q}$. Then
\begin{equation*}
K_{0}(X)\simeq K_{0}(X_{\overline{\mathbb{F}}_{q}})^{\Gamma}\simeq\mathbb{Z}^{2+\rho(X)}
\end{equation*} 
where $\rho(X):=\mathrm{rank}\,\mathrm{NS(X)}$ is the Picard number of $X$.
\end{prop}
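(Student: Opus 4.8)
The plan is to read off $K_0(X)$ from the topological (codimension‑of‑support) filtration together with the comparison to Chow groups, and then deduce the statement about $\Gamma$‑invariants. On any smooth projective connected surface $Y$ over a field one has a three‑step filtration $0=F^3\subseteq F^2\subseteq F^1\subseteq F^0=K_0(Y)$, and the cycle class maps induce isomorphisms $\mathrm{gr}^i_F K_0(Y)\simeq\mathrm{CH}^i(Y)$ for $i=0,1,2$, because the Riemann--Roch‑without‑denominators corrections vanish in codimension $\le 2$ (Fulton, \emph{Intersection Theory}, Ch.~15; SGA~6). So it suffices to identify $\mathrm{CH}^0(X)=\mathbb{Z}$, $\mathrm{CH}^1(X)=\mathrm{Pic}(X)$ and $\mathrm{CH}^2(X)=\mathrm{CH}_0(X)$, and then to check that the two resulting extensions split.

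The Picard group causes no trouble: for a K3 surface $\mathrm{Pic}^0_{X/\mathbb{F}_q}=0$, so $\mathrm{Pic}(X)=\mathrm{NS}(X)$, and $\mathrm{NS}(X_{\overline{\mathbb{F}}_q})$ is torsion free since it embeds into the torsion‑free group $H^2_{\mathrm{\acute{e}t}}(X_{\overline{\mathbb{F}}_q},\mathbb{Z}_\ell(1))$; hence $\mathrm{NS}(X)\subseteq\mathrm{NS}(X_{\overline{\mathbb{F}}_q})$ is free of rank $\rho(X)$. The substantial point is $\mathrm{CH}_0(X)$. A smooth projective geometrically connected variety over a finite field has index $1$ (a consequence of the Lang--Weil estimates), so the degree map $\mathrm{CH}_0(X)\twoheadrightarrow\mathbb{Z}$ has kernel the Albanese kernel $A_0(X)$ (the Albanese variety of a K3 surface is trivial). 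Here I invoke the unramified class field theory of smooth projective surfaces over finite fields (Kato--Saito): $A_0(X)$ is finite, and the reciprocity map identifies the profinite completion of $\mathrm{CH}_0(X)$ with $\pi_1^{\mathrm{ab}}(X)$, whose geometric part is a quotient of the coinvariants $\bigl(\pi_1(X_{\overline{\mathbb{F}}_q})^{\mathrm{ab}}\bigr)_\Gamma$. Since a K3 surface is simply connected this geometric part vanishes, so $A_0(X)=0$ and $\mathrm{CH}_0(X)\simeq\mathbb{Z}$.

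With all three graded pieces free, $\mathrm{Ext}^1_\mathbb{Z}$ between them vanishes, the two extensions split, and $K_0(X)\simeq\mathbb{Z}\oplus\mathbb{Z}^{\rho(X)}\oplus\mathbb{Z}=\mathbb{Z}^{2+\rho(X)}$. For the comparison with $K_0(X_{\overline{\mathbb{F}}_q})^\Gamma$: the same analysis of the topological filtration over $\overline{\mathbb{F}}_q$, using $\mathrm{CH}_0(X_{\overline{\mathbb{F}}_q})=\varinjlim_n\mathrm{CH}_0(X_{\mathbb{F}_{q^n}})=\mathbb{Z}$ (as $A_0$ vanishes over every finite subextension), gives $K_0(X_{\overline{\mathbb{F}}_q})\simeq\mathbb{Z}\oplus\mathrm{NS}(X_{\overline{\mathbb{F}}_q})\oplus\mathbb{Z}$ with $\Gamma$ acting only through the middle factor; taking invariants, using $H^1(\Gamma,\mathbb{Z})=0$ and the $\Gamma$‑equivariant rank splitting $1\mapsto[\mathcal{O}_X]$, gives $K_0(X_{\overline{\mathbb{F}}_q})^\Gamma\simeq\mathbb{Z}^2\oplus\mathrm{NS}(X_{\overline{\mathbb{F}}_q})^\Gamma$. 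Now $\mathrm{NS}(X_{\overline{\mathbb{F}}_q})^\Gamma=\mathrm{Pic}(X_{\overline{\mathbb{F}}_q})^\Gamma=\mathrm{Pic}(X)=\mathrm{NS}(X)$ (Hochschild--Serre together with $\mathrm{Br}(\mathbb{F}_q)=0$), which has rank $\rho(X)$; and the restriction map $K_0(X)\to K_0(X_{\overline{\mathbb{F}}_q})^\Gamma$ is compatible with the (finite) codimension filtrations and is an isomorphism on associated graded pieces, hence an isomorphism. This yields $K_0(X)\simeq K_0(X_{\overline{\mathbb{F}}_q})^\Gamma\simeq\mathbb{Z}^{2+\rho(X)}$.

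The one genuinely nontrivial ingredient, and the step I expect to be the main obstacle, is the vanishing $A_0(X)=0$ — equivalently that the Chow group of zero‑cycles of a K3 surface over a finite field is exactly $\mathbb{Z}$. Over a general field a K3 surface has an enormous Chow group of zero‑cycles, so this must genuinely exploit both the finiteness of the base field (class field theory, finiteness of $A_0$) and the geometry of K3 surfaces (simple connectedness); once that is in place, the remainder is formal homological algebra with finitely filtered abelian groups.
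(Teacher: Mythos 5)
Your argument is correct, and it reaches the proposition by a route organized differently from the paper's. The paper first computes $K_{0}(X_{\overline{\mathbb{F}}_{q}})$ using Huybrechts' \emph{integral} Grothendieck--Riemann--Roch isomorphism $\mathrm{ch}\colon K_{0}(X_{\overline{\mathbb{F}}_{q}})\simeq\bigoplus_{i}\mathrm{CH}^{i}(X_{\overline{\mathbb{F}}_{q}})$ together with $\mathrm{CH}^{2}(X_{\overline{\mathbb{F}}_{q}})\simeq\mathbb{Z}$ (both from \cite{Huy16}), then descends: it compares $\mathrm{CH}_{0}(X)$ with $\mathrm{CH}_{0}(X_{\overline{\mathbb{F}}_{q}})^{\Gamma}$ by a period/index snake-lemma argument (index $=$ period $=1$ over $\mathbb{F}_{q}$, $A_{0}(X_{\overline{\mathbb{F}}_{q}})=0$ from \cite{Huy16}, and $A_{0}(X)=0$ from \cite[Proposition 8]{KS83}), takes $\Gamma$-invariants of the Chern character isomorphism, and finally uses the coniveau filtration \cite[Example 15.3.6]{Ful84} to see that $K_{0}(X)\rightarrow K_{0}(X_{\overline{\mathbb{F}}_{q}})^{\Gamma}$ is an isomorphism. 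You instead work directly over $\mathbb{F}_{q}$ with the topological filtration and Riemann--Roch without denominators (the same Fulton input, sufficient because the torsion bound $(i-1)!$ is trivial in codimension $\leq 2$), and you obtain $\mathrm{CH}_{0}(X)\simeq\mathbb{Z}$ from unramified class field theory combined with the triviality of $\pi_{1}^{\mathrm{\acute{e}t}}(X_{\overline{\mathbb{F}}_{q}})$; applying this over every finite subextension also gives $\mathrm{CH}_{0}(X_{\overline{\mathbb{F}}_{q}})\simeq\mathbb{Z}$, so you never need the integral Chern character isomorphism nor the period--index comparison. The essential arithmetic input is the same in both proofs -- Kato--Saito, which is exactly what the paper cites for $A_{0}(X)=0$ -- as are the Hochschild--Serre/$\mathrm{Br}(\mathbb{F}_{q})=0$ identification $\mathrm{Pic}(X_{\overline{\mathbb{F}}_{q}})^{\Gamma}\simeq\mathrm{Pic}(X)$ and the concluding filtration comparison; your handling of the $\Gamma$-invariants (splitting off the rank via $[\mathcal{O}_{X}]$, $H^{1}(\Gamma,\mathbb{Z})=0$, freeness of $\mathrm{NS}$) is sound, and is if anything more careful than the paper's corresponding step. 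What your arrangement buys is independence from the K3-specific results of \cite{Huy16}, at the cost of invoking simple connectedness of K3 surfaces in characteristic $p$; the paper's arrangement, by contrast, transfers more readily to other surfaces for which the Chow group of zero-cycles over $\overline{\mathbb{F}}_{q}$ is already understood.
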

\begin{proof}
By \cite[\S12.2 Corollary 1.5]{Huy16}, the Grothendieck--Riemann-Roch theorem holds \emph{integrally} for K3 surfaces over algebraically closed fields. That is, the Chern character map defines a ring isomorphism
\begin{equation}\label{GRR}
\mathrm{ch}\,:\,K_{0}(X_{\overline{\mathbb{F}}_{q}})\xrightarrow{\sim}\bigoplus_{i=0}^{2}\mathrm{CH}^{i}(X_{\overline{\mathbb{F}}_{q}})\,.
\end{equation}
Clearly $\mathrm{CH}^{0}(X_{\overline{\mathbb{F}}_{q}})\simeq\mathbb{Z}$. It is well known that for a K3 surface over an algebraically closed field we have $\mathrm{CH}^{1}(X_{\overline{\mathbb{F}}_{q}})\simeq\mathrm{Pic}(X_{\overline{\mathbb{F}}_{q}})$ torsion-free and isomorphic to $\mathrm{NS}(X_{\overline{\mathbb{F}}_{q}})$. By \cite[\S12.3 Corollary 2.17]{Huy16} we have $\mathrm{CH}^{2}(X_{\overline{\mathbb{F}}_{q}})\simeq\mathbb{Z}$. In particular, 
\begin{equation*}
\,K_{0}(X_{\overline{\mathbb{F}}_{q}})\simeq\mathbb{Z}^{2+\rho(X_{\overline{\mathbb{F}}_{q}})}
\end{equation*}
is a free abelian group of rank $2+\rho(X_{\overline{\mathbb{F}}_{q}})$.

Taking Galois-invariants we see that
\begin{align*}
K_{0}(X_{\overline{\mathbb{F}}_{q}})^{\Gamma}
& \simeq\mathrm{CH}^{0}(X_{\overline{\mathbb{F}}_{q}}^{\Gamma})\oplus\mathrm{CH}^{1}(X_{\overline{\mathbb{F}}_{q}})^{\Gamma}\oplus\mathrm{CH}^{2}(X_{\overline{\mathbb{F}}_{q}})^{\Gamma}  \\
& \simeq \mathrm{CH}^{0}(X)\oplus\mathrm{Pic}(X_{\overline{\mathbb{F}}_{q}})^{\Gamma}\oplus\mathrm{CH}^{2}(X_{\overline{\mathbb{F}}_{q}})^{\Gamma} \\
& \simeq\mathrm{CH}^{0}(X)\oplus\mathrm{Pic}(X)\oplus\mathrm{CH}^{2}(X_{\overline{\mathbb{F}}_{q}})^{\Gamma}
\end{align*}
where the isomorphism $\mathrm{Pic}(X_{\overline{\mathbb{F}}_{q}})^{\Gamma}\simeq\mathrm{Pic}(X)$ is because the Hochschild-Serre spectral sequence gives an exact sequence
\begin{equation*}
0\rightarrow\mathrm{Pic}(X)\rightarrow\mathrm{Pic}(X_{\overline{\mathbb{F}}_{q}})^{\Gamma}\rightarrow\mathrm{Br}(\mathbb{F}_{q})
\end{equation*}
and the Brauer group of a finite field is trivial.

Now we claim that the natural map $\mathrm{CH}^{2}(X)\rightarrow\mathrm{CH}^{2}(X_{\overline{\mathbb{F}}_{q}})^{\Gamma}$ on zero-cycles is an isomorphism. Indeed, every geometrically irreducible projective variety over $\mathbb{F}_{q}$ has a zero-cycle of degree one (see \cite[Lemme 1]{Sou84} or \cite[Example 4.1.3]{Cla10}), so the composition (and hence the second map)
\begin{equation*}
\mathrm{deg}:\mathrm{CH}^{2}(X)\rightarrow\mathrm{CH}^{2}(X_{\overline{\mathbb{F}}_{q}})^{\Gamma}\rightarrow\mathbb{Z}
\end{equation*}
is surjective. But by \cite[\S12.3 Corollary 2.17]{Huy16} we have $\mathrm{CH}^{2}(X_{\overline{\mathbb{F}}_{q}})_{\mathrm{deg}=0}=0$, and by \cite[Proposition 8]{KS83} we have $\mathrm{CH}^{2}(X)_{\mathrm{deg}=0}=0$, so
\begin{equation*}
\mathrm{CH}^{2}(X)\xrightarrow{\sim}\mathrm{CH}^{2}(X_{\overline{\mathbb{F}}_{q}})^{\Gamma}\simeq\mathbb{Z}\,.
\end{equation*}
In particular, taking Galois-invariants of \eqref{GRR} gives an isomorphism
\begin{equation*}
K_{0}(X_{\overline{\mathbb{F}}_{q}})^{\Gamma}\xrightarrow{\sim}\bigoplus_{i=0}^{2}\mathrm{CH}^{i}(X)\,.
\end{equation*}

Finally we claim that the natural map $K_{0}(X)\rightarrow K_{0}(X_{\overline{\mathbb{F}}_{q}})^{\Gamma}$ is an isomorphism. Indeed, let $F^{2}K_{0}(X)\subset F^{1}K_{0}(X)\subset F^{0}K_{0}(X)=K_{0}(X)$ denote the coniveau filtration on $K_{0}(X)$, and similarly for $K_{0}(X_{\overline{\mathbb{F}}_{q}})$. Since $X$ is a smooth surface, we have isomorphisms
\begin{align*}
& F^{0}K_{0}(X)/F^{1}K_{0}(X)\xrightarrow{\sim}\mathrm{CH}^{0}(X) \\
& F^{1}K_{0}(X)/F^{2}K_{0}(X)\xrightarrow{\sim}\mathrm{CH}^{1}(X) \\
& F^{2}K_{0}(X)\xrightarrow{\sim}\mathrm{CH}^{2}(X)\,.
\end{align*}
and similarly for $X_{\overline{\mathbb{F}}_{q}}$ \cite[Example 15.3.6]{Ful84}. By our previous discussion, we see then that the map $K_{0}(X)\rightarrow K_{0}(X_{\overline{\mathbb{F}}_{q}})^{\Gamma}$ induces an isomorphism on associated graded groups, and is therefore itself an isomorphism. 
\end{proof}
 
We also can describe the higher $K$-groups for more general surfaces, if we assume Parshin's conjecture (see Appendix \ref{Parshin section}):

\begin{thm}\label{AHSS}
Let $X$ be a smooth projective geometrically irreducible surface over $\mathbb{F}_{q}$ with $H^{1}_{\mathrm{\acute{e}t}}(X_{\overline{\mathbb{F}}_{q}},\mathbb{Z}_{\ell})=H^{3}_{\mathrm{\acute{e}t}}(X_{\overline{\mathbb{F}}_{q}},\mathbb{Z}_{\ell})=0$ and $H^{2}_{\mathrm{\acute{e}t}}(X_{\overline{\mathbb{F}}_{q}},\mathbb{Z}_{\ell})$ a free $\mathbb{Z}_{\ell}$-module, for $\ell\neq p$. If Parshin's conjecture holds for $X$, then the motivic Atiyah-Hirzebruch spectral sequence
\begin{equation*}
E_{2}^{i,j}=H^{i-j}_{\mathcal{M}}(X,\mathbb{Z}(-j))\Rightarrow K_{-i-j}(X)
\end{equation*}
degenerates at the $E_{2}$-page. Moreover, the higher $K$-groups of $X$ are as follows:
\[ K_{n}(X)\simeq \begin{cases} 
      \left(\mathbb{Z}/(q-1)\mathbb{Z}\right)^{\oplus 2}\oplus\displaystyle\varinjlim_{r}H^{0}_{\mathrm{Zar}}(X,W_{r}\Omega_{X,\log}^{2})\oplus\bigoplus_{\ell\neq p}{H^{2}_{\mathrm{\acute{e}t}}(X_{\overline{\mathbb{F}}_{q}},\mathbb{Z}_{\ell}(2))}_{\Gamma} & \text{ if }n=1 \\
      0 & \text{ if } n=2m, m\geq 1 \\
      \left(\mathbb{Z}/(q^{m}-1)\mathbb{Z}\right)^{\oplus 2}\oplus\displaystyle\bigoplus_{\ell\neq p}{H^{2}_{\mathrm{\acute{e}t}}(X_{\overline{\mathbb{F}}_{q}},\mathbb{Z}_{\ell}(m+1))}_{\Gamma} & \text{ if }n=2m-1, m\geq 2\,. \\
   \end{cases}
\]
\end{thm}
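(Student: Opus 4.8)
The plan is to compute the $E_2$-page from Theorem \ref{main}, check that every differential vanishes, and then split the resulting finite filtration on $K_n(X)$. I write $E_2^{a,b}=H^a_{\mathcal{M}}(X,\mathbb{Z}(b))$, abutting to $K_{2b-a}(X)$, with $d_r$ of bidegree $(2r-1,r-1)$ in these (cohomological degree, weight) coordinates.

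\emph{Step 1: the $E_2$-page.} Since $H^i_{\mathrm{\acute{e}t}}(X_{\overline{\mathbb{F}}_q},\mathbb{Z}_\ell)$ is $0$ for $i=1,3$ and free for $i=0,2,4$, the hypotheses of Theorem \ref{main} hold. The additional vanishing collapses the Hochschild--Serre sequences \eqref{HS sequence}, so $H^i_{\mathrm{\acute{e}t}}(X_{\overline{\mathbb{F}}_q},\mathbb{Z}_\ell(n))_\Gamma$ is $0$ for $i=1,3$, is $\mathbb{Z}_\ell/(q^n-1)$ for $i=0$, and is $\mathbb{Z}_\ell/(q^{n-2}-1)$ for $i=4$; together with $W_r\Omega^n_{X,\log}=0$ for $n\geq 3$ on a surface this shows, via the proof of Theorem \ref{main}, that $H^a_{\mathcal{M}}(X,\mathbb{Z}(b))$ vanishes for $a\geq 6$ and otherwise is nonzero only in the positions $(a,b)=(0,0)$, value $\mathbb{Z}$; $(1,b)$ for $b\geq 1$, value $\mathbb{Z}/(q^b-1)$; $(2,1)$, value $\mathrm{Pic}(X)$; $(3,b)$ for $b\geq 2$, value $\bigoplus_{\ell\neq p}H^2_{\mathrm{\acute{e}t}}(X_{\overline{\mathbb{F}}_q},\mathbb{Z}_\ell(b))_\Gamma$ (plus the summand $\varinjlim_r H^0_{\mathrm{Zar}}(X,W_r\Omega^2_{X,\log})$ when $b=2$); $(4,2)$, value $\mathrm{CH}^2(X)$; and $(5,b)$ for $b\geq 3$, value $\mathbb{Z}/(q^{b-2}-1)$. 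The point requiring care is that $H^2_{\mathcal{M}}(X,\mathbb{Z}(b))=0$ for all $b\geq 2$ (which is what will force $K_{2m}(X)=0$): its $p$-part is zero for degree reasons, and by Lemma \ref{GL} its $\ell$-part is $H^1_{\mathrm{\acute{e}t}}(X,\mathbb{Q}_\ell/\mathbb{Z}_\ell(b))$, which vanishes because $H^1_{\mathrm{\acute{e}t}}(X_{\overline{\mathbb{F}}_q},\mathbb{Q}_\ell/\mathbb{Z}_\ell(b))=0$ (by torsion-freeness of $H^2$ and vanishing of $H^1$) and $H^1(\Gamma,\mathbb{Q}_\ell/\mathbb{Z}_\ell(b))=0$ (multiplication by $q^b-1$ is already surjective on the divisible group $\mathbb{Q}_\ell/\mathbb{Z}_\ell$).

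\emph{Step 2: degeneration.} A differential $d_r$ ($r\geq 2$) moves a weight-$b$ entry to weight $b+r-1$ and raises the degree by the odd number $2r-1\geq 3$. When the target weight is $\geq 3$ the nonzero groups there sit only in the odd degrees $1,3,5$, and a parity/overshoot check kills every such $d_r$. The finitely many $d_r$ with target weight $\leq 2$ are all visibly zero except $d_2:H^1_{\mathcal{M}}(X,\mathbb{Z}(1))\to H^4_{\mathcal{M}}(X,\mathbb{Z}(2))=\mathrm{CH}^2(X)$, i.e. a map $\mathcal{O}^\ast(X)=\mathbb{F}_q^\ast\to\mathrm{CH}_0(X)$. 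Since $X$ is proper and geometrically irreducible, $\mathcal{O}^\ast(X)$ is pulled back along the structure map $f:X\to\mathrm{Spec}\,\mathbb{F}_q$; by naturality of the spectral sequence this $d_2$ is $f^\ast$ applied to the analogous differential for $\mathrm{Spec}\,\mathbb{F}_q$, whose target $H^4_{\mathcal{M}}(\mathbb{F}_q,\mathbb{Z}(2))$ is zero. Hence $E_2=E_\infty$.

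\emph{Step 3: $K$-groups and splitting.} For $n$ even, $2b-a=n$ forces $a$ even; no even-degree entry with $b\geq 1$ other than $\mathrm{Pic}(X)$ and $\mathrm{CH}^2(X)$ survives, and those contribute only to $K_0$, so $K_n(X)=0$. For $n=2m-1$ the surviving graded pieces are $H^1_{\mathcal{M}}(X,\mathbb{Z}(m))=\mathbb{Z}/(q^m-1)$ in weight $m$, $H^3_{\mathcal{M}}(X,\mathbb{Z}(m+1))$ in weight $m+1$, and $H^5_{\mathcal{M}}(X,\mathbb{Z}(m+2))=\mathbb{Z}/(q^m-1)$ in weight $m+2$; the middle term is $\bigoplus_{\ell\neq p}H^2_{\mathrm{\acute{e}t}}(X_{\overline{\mathbb{F}}_q},\mathbb{Z}_\ell(m+1))_\Gamma$ for $m\geq 2$ and, for $m=1$, additionally carries $\varinjlim_r H^0_{\mathrm{Zar}}(X,W_r\Omega^2_{X,\log})$. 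To split this three-step filtration I would use two retractions. First, $f^\ast:K_{2m-1}(\mathbb{F}_q)\xrightarrow{\ \sim\ }H^1_{\mathcal{M}}(\mathbb{F}_q,\mathbb{Z}(m))\to H^1_{\mathcal{M}}(X,\mathbb{Z}(m))$ identifies $K_{2m-1}(\mathbb{F}_q)$ with the top graded quotient; since $X$ carries a zero-cycle of degree one (as in the proof of Proposition \ref{K0}), the associated transfer retracts $f^\ast$, peeling off one copy of $\mathbb{Z}/(q^m-1)$ compatibly with the filtration. Second, the proper pushforward $f_\ast:K_{2m-1}(X)\to K_{2m-1}(\mathbb{F}_q)$ restricts on the bottom step $H^5_{\mathcal{M}}(X,\mathbb{Z}(m+2))$ to the motivic trace, which is an isomorphism onto $H^1_{\mathcal{M}}(\mathbb{F}_q,\mathbb{Z}(m))$ by compatibility with the $\ell$-adic trace $H^4_{\mathrm{\acute{e}t}}(X_{\overline{\mathbb{F}}_q},\mathbb{Z}_\ell(m+2))\xrightarrow{\sim}\mathbb{Z}_\ell(m)$; this retracts the inclusion of the bottom step, peeling off the second copy of $\mathbb{Z}/(q^m-1)$. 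What is left is the middle graded piece as a direct summand, which gives the stated formula. I expect the two genuinely non-formal inputs to be the vanishing of the surviving $d_2$ (handled by functoriality above) and the identification of $f_\ast$ on top motivic cohomology with an isomorphism; everything else is bookkeeping built on Theorem \ref{main}.
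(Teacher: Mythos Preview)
Your argument is correct and reaches the same conclusion, but it differs from the paper's proof in two places worth flagging.

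For the vanishing of the lone surviving differential $d_2:\mathcal{O}^\ast(X)\to\mathrm{CH}_0(X)$, the paper does not use naturality along $f:X\to\mathrm{Spec}\,\mathbb{F}_q$. Instead it compares with the Atiyah--Hirzebruch spectral sequence with $\mathbb{Q}_\ell/\mathbb{Z}_\ell$-coefficients and invokes Kahn's result \cite{Kah97} that the edge maps $K_{2n}(X;\mathbb{Q}_\ell/\mathbb{Z}_\ell)\to H^0_{\mathrm{\acute{e}t}}(X,\mathbb{Q}_\ell/\mathbb{Z}_\ell(n))$ are split surjections, which forces the integral edge map onto $H^1_{\mathcal{M}}(X,\mathbb{Z}(1))$ to be surjective and hence $d_2=0$. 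Your functoriality argument is shorter and avoids this machinery; the paper's approach, however, simultaneously exhibits $H^1_{\mathcal{M}}(X,\mathbb{Z}(n))$ as a \emph{Bott summand} of $K_{2n-1}(X)$ for every $n$, which it then reuses to split the top graded piece in Step~3.

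For the splitting of the filtration, the paper peels off the top quotient $H^1_{\mathcal{M}}(X,\mathbb{Z}(m))$ via the Bott summand just mentioned, and splits the bottom step by choosing an $\mathbb{F}_q$-rational point $i:\mathrm{Spec}\,\mathbb{F}_q\to X$ (descending from a finite extension if none exists) and observing that $i_\ast K_{2m-1}(\mathbb{F}_q)$ is a summand retracted by $f_\ast$. Your approach---$f^\ast$ as a section for the top, $f_\ast$ as a retraction for the bottom via the motivic trace---is essentially dual to this and equally valid; note that the zero-cycle transfer you mention is not actually needed once you know $f^\ast$ hits the top graded piece isomorphically, since that already furnishes a section of the quotient map. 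One small caveat: your use of $f_\ast$ on the bottom step presupposes that proper pushforward on $K$-theory is compatible with the motivic filtration (with the expected codimension shift) and induces the motivic trace on graded pieces; this is standard but is a genuine input, whereas the paper's rational-point argument sidesteps it via the trivial identity $f_\ast i_\ast=\mathrm{id}$.
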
 
\begin{proof}
We have computed the $E_{2}$-page as a special case of Theorem \ref{main} (see Figure \ref{fig:M1}). We see that the only differential to investigate is 
\begin{equation*}
d_{2}:E^{0,-1}_{2}=\mathcal{O}^{\ast}(X)=\mathbb{Z}/(q-1)\mathbb{Z}\rightarrow E^{2,-2}_{2}=\mathrm{CH}_{0}(X)\,.
\end{equation*}
We claim it is the zero map. Since the higher $K$-groups $K_{n+1}(X)$, $n\geq 0$, are torsion (because $X$ satisfies Parshin's conjecture), the universal coefficient exact sequence 
\begin{equation*}
0\rightarrow K_{n+1}(X)\otimes\mathbb{Q}_{\ell}/\mathbb{Z}_{\ell}\rightarrow K_{n+1}(X;\mathbb{Q}_{\ell}/\mathbb{Z}_{\ell})\rightarrow K_{n}(X)_{\ell\mathrm{-tors}}\rightarrow 0
\end{equation*}  
shows that $K_{n}(X)=K_{n}(X)_{\mathrm{tors}}\simeq\bigoplus_{\ell}K_{n+1}(X;\mathbb{Q}_{\ell}/\mathbb{Z}_{\ell})$, where $\ell$ ranges over all primes. For each prime $\ell$, the maps $K_{n+1}(X;\mathbb{Q}_{\ell}/\mathbb{Z}_{\ell})\rightarrow K_{n}(X)_{\ell\mathrm{-tors}}$ (and the analogous maps from motivic cohomology with $\mathbb{Q}_{\ell}/\mathbb{Z}_{\ell}$-coefficients) give a map of spectral sequences from the Atiyah-Hirzebruch spectral sequence with $\mathbb{Q}_{\ell}/\mathbb{Z}_{\ell}$-coefficients. In particular, the edge maps fit into a commutative diagram 
\begin{equation*}
\begin{tikzpicture}[descr/.style={fill=white,inner sep=1.5pt}]
        \matrix (m) [
            matrix of math nodes,
            row sep=2.5em,
            column sep=2em,
            text height=1.5ex, text depth=0.25ex
        ]
        { \displaystyle\bigoplus_{\ell\neq p}K_{2}(X;\mathbb{Q}_{\ell}/\mathbb{Z}_{\ell}) & \displaystyle\bigoplus_{\ell\neq p}H^{0}_{\mathcal{M}}(X,\mathbb{Q}_{\ell}/\mathbb{Z}_{\ell}(1)) \\
        K_{1}(X) & H^{1}_{\mathcal{M}}(X,\mathbb{Z}(1)) \\};

        \path[overlay,->, font=\scriptsize] 
        (m-1-1) edge node[above]{edge}(m-1-2)
        (m-2-1) edge node[above]{edge}(m-2-2)
        (m-1-2) edge node [right] {$\wr$} (m-2-2);
        
        \path[overlay, right hook->, font=\scriptsize]
        (m-1-1) edge (m-2-1);
                                                
\end{tikzpicture}
\end{equation*}
The vertical maps are the inclusion of the $\ell\neq p$-primary summands; the right-hand vertical map is an isomorphism because $H^{1}_{\mathcal{M}}(X,\mathbb{Z}(1))= H^{1}_{\mathcal{M}}(X,\mathbb{Z}(1))_{\mathrm{tors}}=\mathcal{O}^{\ast}(X)$. The top edge map is a split surjection. In fact, more generally, the edge maps $K_{2n}(X;\mathbb{Q}_{\ell}/\mathbb{Z}_{\ell})\rightarrow H^{0}_{\mathrm{\acute{e}t}}(X,\mathbb{Q}_{\ell}/\mathbb{Z}_{\ell}(n))$ coming from the Atiyah-Hirzebruch spectral sequence with $\mathbb{Q}_{\ell}/\mathbb{Z}_{\ell}$-coefficients are split surjections for each $n\geq 1$ and hence $\mathbb{Z}/(q^{n}-1)\mathbb{Z}\simeq H^{1}_{\mathcal{M}}(X,\mathbb{Z}(n))\simeq\bigoplus_{\ell\neq p}H^{0}_{\mathrm{\acute{e}t}}(X,\mathbb{Q}_{\ell}/\mathbb{Z}_{\ell}(n))$ is a direct summand of the odd $K$-groups $K_{2n-1}(X)$ by \cite[Corollary 9.6]{Kah97} (it is the so-called Bott summand). 

Now, the Atiyah-Hirzebruch spectral sequence determines the $K$-groups up to an extension problem; the graded quotients of the induced filtration on $K_{n}(X)$ are the integral motivic cohomology groups $H_{\mathcal{M}}^{2i-n}(X,\mathbb{Z}(i))$. We claim that these extensions split for $n\geq 1$. For the even $K$-groups $K_{2n}(X)$, $n\geq 1$, there is nothing to prove since the graded pieces are trivial. For the odd $K$-groups $K_{2n-1}(X)$, we have seen that $\mathbb{Z}/(q^{n}-1)\mathbb{Z}\simeq H^{1}_{\mathcal{M}}(X,\mathbb{Z}(n))$ is a direct summand (the Bott summand). To see that the other copy of $\mathbb{Z}/(q^{n}-1)\mathbb{Z}$ appearing as a graded quotient of $K_{2n-1}(X)$ is also a direct summand, first suppose that $X$ has an $\mathbb{F}_{q}$-rational point $i:\mathrm{Spec}\,\mathbb{F}_{q}\rightarrow X$. Then  $i_{\ast}K_{2n-1}(\mathbb{F}_{q})\simeq\mathbb{Z}/(q^{n}-1)\mathbb{Z}$ is a summand of $K_{2n-1}(X)$, proving the result. If $X$ does not have an $\mathbb{F}_{q}$-rational point, then choose an $F$-rational point for a finite extension $F/\mathbb{F}_{q}$, and use that $K_{2n-1}(F)^{G}\simeq K_{2n-1}(\mathbb{F}_{q})$ where $G=\mathrm{Gal}(F/\mathbb{F}_{q})$. 
\end{proof}

\begin{rem}\label{Coombes rational}
The Brown-Gersten-Quillen spectral sequence 
\begin{equation*}
E_{2}^{i,j}=H^{i}(X,\mathcal{K}_{-j})\Rightarrow K_{-i-j}(X)
\end{equation*}
degenerates at the $E_{2}$-page for rational surfaces, and the filtration on the abutment splits to give a direct sum decomposition
\begin{equation*}
K_{n}(X)\simeq H^{0}(X,\mathcal{K}_{n})\oplus H^{1}(X,\mathcal{K}_{n+1})\oplus  H^{2}(X,\mathcal{K}_{n+2})
\end{equation*}
for all $n\geq 0$. The $K$-cohomology groups, up to $p$-primary torsion, of a rational surface over $\mathbb{F}_{q}$ are computed in \cite{Coo87}:
\begin{equation*}
H^{0}(X,\mathcal{K}_{n})\simeq H^{2}(X,\mathcal{K}_{n+2})\simeq K_{n}(\mathbb{F}_{q})
\end{equation*} 
\begin{equation*}
H^{1}(X,\mathcal{K}_{n+1})\simeq(\mathrm{Pic}(X_{\overline{\mathbb{F}}_{q}})\otimes K_{n}(\overline{\mathbb{F}}_{q}))^{\Gamma}\,. 
\end{equation*}
One finds then that
\[ K_{n}(X)\simeq \begin{cases} 
      \mathbb{Z}^{2}\oplus\mathrm{Pic}(X) & \text{ if } n=0 \\
      0 & \text{ if }n=2m, m\geq 1 \\
      \left(\mathbb{Z}/(q^{m}-1)\mathbb{Z}\right)^{\oplus 2}\oplus\left(\mathrm{Pic}(X_{\overline{\mathbb{F}}_{q}})\otimes K_{2m-1}(\overline{\mathbb{F}}_{q})\right)^{\Gamma} & \text{ if }n=2m-1, m\geq 1
   \end{cases}
\]
up to $p$-primary torsion. Note that $K_{2m-1}(\overline{\mathbb{F}}_{q})\simeq\bigoplus_{\ell\neq p}\mathbb{Q}_{\ell}/\mathbb{Z}_{\ell}(m)$ by Quillen's computation of $K$-theory for finite fields, and $\mathrm{Pic}(X_{\overline{\mathbb{F}}_{q}})\otimes \mathbb{Q}_{\ell}/\mathbb{Z}_{\ell}\simeq H^{2}_{\mathrm{\acute{e}t}}(X_{\overline{\mathbb{F}}_{q}},\mathbb{Q}_{\ell}/\mathbb{Z}_{\ell}(1))$ because $\mathrm{Br}(X_{\overline{\mathbb{F}}_{q}})=0$ for a rational surface. The snake lemma on the diagram
\begin{equation*}
\begin{tikzpicture}[descr/.style={fill=white,inner sep=1.5pt}]
        \matrix (m) [
            matrix of math nodes,
            row sep=2.5em,
            column sep=1em,
            text height=1.5ex, text depth=0.25ex
        ]
        { 0 & H^{2}_{\mathrm{\acute{e}t}}(X_{\overline{\mathbb{F}}_{q}},\mathbb{Z}_{\ell}(m+1)) & H^{2}_{\mathrm{\acute{e}t}}(X_{\overline{\mathbb{F}}_{q}},\mathbb{Q}_{\ell}(m+1)) & H^{2}_{\mathrm{\acute{e}t}}(X_{\overline{\mathbb{F}}_{q}},\mathbb{Q}_{\ell}/\mathbb{Z}_{\ell}(m+1)) & 0 \\
       0 & H^{2}_{\mathrm{\acute{e}t}}(X_{\overline{\mathbb{F}}_{q}},\mathbb{Z}_{\ell}(m+1)) & H^{2}_{\mathrm{\acute{e}t}}(X_{\overline{\mathbb{F}}_{q}},\mathbb{Q}_{\ell}(m+1)) & H^{2}_{\mathrm{\acute{e}t}}(X_{\overline{\mathbb{F}}_{q}},\mathbb{Q}_{\ell}/\mathbb{Z}_{\ell}(m+1)) & 0 \\};

        \path[overlay,->, font=\scriptsize] 
        (m-1-1) edge (m-1-2)
        (m-1-2) edge (m-1-3)
        (m-1-3) edge (m-1-4)
        (m-1-4) edge (m-1-5)
        (m-2-1) edge (m-2-2)
        (m-2-2) edge (m-2-3)
        (m-2-3) edge (m-2-4)
        (m-2-4) edge (m-2-5)
        (m-1-2) edge node[right]{$F-1$} (m-2-2)
        (m-1-3) edge node[right]{$F-1$} (m-2-3)
        (m-1-4) edge node[right]{$F-1$} (m-2-4)
        ;
                                                
\end{tikzpicture}
\end{equation*}
shows that $\left(\mathrm{Pic}(X_{\overline{\mathbb{F}}_{q}})\otimes K_{2m-1}(\overline{\mathbb{F}}_{q})\right)^{\Gamma}\simeq\bigoplus_{\ell\neq p}{H^{2}_{\mathrm{\acute{e}t}}(X_{\overline{\mathbb{F}}_{q}},\mathbb{Z}_{\ell}(m+1))}_{\Gamma}$. In particular, the calculation in Theorem \ref{AHSS} in the special case of rational surfaces recovers the above result of Coombes and extends them to include $p$-primary torsion.
\end{rem}

\begin{example}
Suppose that $\mathrm{char}(\mathbb{F}_{q})=p$ is odd. Consider the Fermat surface 
\begin{equation*}
S_{d}:=\{X^d+Y^d+Z^d+W^d=0\}\subset\mathbb{P}_{\mathbb{F}_{q}}^{3}
\end{equation*}
for $d\geq 1$ with $d\not\equiv 0\mod p$. Suppose that $p^{\nu}+1\equiv 0\mod d$ for some power $\nu$. Then  
$S_{d}$ is unirational and hence supersingular \cite{Shi74}. In particular, Parshin's conjecture holds for $S_{d}$ and $H^{2}_{\mathrm{\acute{e}t}}((S_{d})_{\overline{\mathbb{F}}_{q}},\mathbb{Z}_{\ell}(n))\simeq(\mathrm{NS}((S_{d})_{\overline{\mathbb{F}}_{q}})\otimes\mathbb{Z}_{\ell})(n-1)$ for all $\ell\neq p$. Note that $\varinjlim_{r}H^{0}_{\mathrm{Zar}}(S_{d},W_{r}\Omega_{S_{d},\log}^{2})=0$ if $d\leq 3$ by Remark \ref{no p-torsion}. In fact, by \cite{AM77} we have $H^{2}(S_{d},W\mathcal{O}_{S_{d}})\simeq D(\widehat{\mathrm{Br}}_{S_{d}})=D(\widehat{\mathbb{G}}_{a}^{\oplus h^{0,2}})=\mathbb{F}_{q}\llbracket x\rrbracket^{\oplus h^{0,2}}$, where $h^{0,2}=\dim_{\mathbb{F}_{q}}H^{2}(S_{d},\mathcal{O}_{S_{d}})={d-1\choose 3}$, with $F=0, Vx^{m}=x^{m+1}$, and hence $\left[\varprojlim_{\sigma}H^{2}_{\mathrm{\acute{e}t}}(X,W\mathcal{O}_{X})\right]_{\sigma}=0$. Hence we conclude from the proof of Theorem \ref{main} that $\varinjlim_{r}H^{0}_{\mathrm{Zar}}(X,W_{r}\Omega_{X,\log}^{2})=0$ for all $d$. Thus Theorem \ref{AHSS} gives
\[ K_{n}(S_{d})\simeq \begin{cases} 
           0 & \text{ if } n=2m, m\geq 1 \\
      \left(\mathbb{Z}/(q^{m}-1)\mathbb{Z}\right)^{\oplus 2}\oplus\displaystyle\bigoplus_{\ell\neq p}{(\mathrm{NS}((S_{d})_{\overline{\mathbb{F}}_{q}})\otimes\mathbb{Z}_{\ell})(m)}_{\Gamma} & \text{ if }n=2m-1, m\geq 1\,. \\
   \end{cases}
\]
\end{example}

\section{Motivic cohomology and $K$-theory of Enriques surfaces}

It is shown in \cite[Proposition 3.1]{Coo92} that the Chow $\mathbb{Z}[1/2]$-motive of an Enriques surface $X$ over an arbitrary field $k$ with an elliptic pencil $E\rightarrow\mathbb{P}_{k}^{1}$ is isomorphic to the Chow $\mathbb{Z}[1/2]$-motive of the rational surface $J$ given by the associated Jacobian fibration $J\rightarrow\mathbb{P}_{k}^{1}$. Now consider an Enriques surface $X$ over $\mathbb{F}_{q}$ with $\mathrm{char}(\mathbb{F}_{q})=p>2$. In \cite[\S2]{Coo92} it is shown that there exists a finite extension $F/\mathbb{F}_{q}$ such that the base change $X_{F}$ admits an elliptic pencil. Moreover, it is shown in \cite[Theorem 3.2]{Coo92} that the Chow $\mathbb{Z}[1/2p]$-motive of $X$ is completely determined by the Chow $\mathbb{Z}[1/2p]$-motive of $X_{F}$. In particular, the Chow $\mathbb{Z}[1/2p]$-motive of $X$, and hence the $K$-cohomology of $X$ up to $2$- and $p$-torsion, agrees with that of a rational surface. As we mentioned in Remark \ref{Coombes rational}, the $K$-cohomology and $K$-theory of rational surfaces over finite fields was computed in \cite{Coo87}, up to $p$-torsion. In this way, Coombes computes the $K$-theory of an Enriques surface over $\mathbb{F}_{q}$ up to $2$- and $p$-torsion. For completeness, we include here a description of the motivic cohomology of Enriques surfaces, and confirm Coombes' suspicion that the $p$-torsion vanishes \cite[Remark 3.5]{Coo92}. We also handle the $2$-torsion.
\begin{thm}
Let $X$ be an Enriques surface over $\mathbb{F}_{q}$, with $\mathrm{char}(\mathbb{F}_{q})=p>2$. Then the motivic cohomology groups $H^{i}_{\mathcal{M}}(X,\mathbb{Z}(n))$ of $X$ are as in Table \ref{fig:table2}. Moreover, the motivic Atiyah-Hirzebruch spectral sequence 
\begin{equation*}
E_{2}^{i,j}=H^{i-j}_{\mathcal{M}}(X,\mathbb{Z}(-j))\Rightarrow K_{-i-j}(X)
\end{equation*}
of $X$ degenerates at the $E_{2}$-page, and the $K$-groups $K_{n}(X)$ of $X$ are as follows:
\[ K_{n}(X)\simeq \begin{cases} 
      \mathbb{Z}^{2}\oplus\mathrm{Pic}(X) & \text{ if } n=0 \\
      \mathbb{Z}/2\mathbb{Z} & \text{ if }n=2m, m\geq 1 \\
      \left(\mathbb{Z}/(q^{m}-1)\mathbb{Z}\right)^{\oplus 2}\oplus\displaystyle\bigoplus_{\ell\neq p}\left(\mathrm{Pic}(X_{\overline{\mathbb{F}}_{q}})\otimes\mathbb{Z}_{\ell}(m)\right)_{\Gamma} & \text{ if }n=2m-1, m\geq 1\,.
   \end{cases}
\]
\end{thm}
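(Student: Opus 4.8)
The plan is to split the computation by primes: handle the part away from $2$ and $p$ via Coombes' comparison of $X$ with an associated rational surface, show the $p$-primary part vanishes, compute the $2$-primary part directly by Geisser--Levine plus Hochschild--Serre, and then feed the resulting motivic cohomology into the Atiyah--Hirzebruch spectral sequence exactly as in the proof of Theorem~\ref{AHSS}. First I would record the geometry. Since $p>2$, $X$ is a classical Enriques surface, so $\mathrm{Pic}^{0}_{X/\mathbb{F}_{q}}=0$, $H^{0}(X,\Omega^{2}_{X})=H^{1}(X,\Omega^{2}_{X})=0$ (the second by Serre duality from $H^{1}(X,\mathcal{O}_{X})=0$), $b_{2}(X_{\overline{\mathbb{F}}_{q}})=\rho(X_{\overline{\mathbb{F}}_{q}})=10$, and $\mathrm{NS}(X_{\overline{\mathbb{F}}_{q}})_{\mathrm{tors}}=\mathbb{Z}/2\mathbb{Z}=\langle\omega_{X}\rangle$; thus $X$ is Shioda-supersingular and $\mathrm{Pic}(X_{\overline{\mathbb{F}}_{q}})=\mathrm{NS}(X_{\overline{\mathbb{F}}_{q}})$. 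For $\ell\neq 2,p$ the groups $H^{i}_{\mathrm{\acute{e}t}}(X_{\overline{\mathbb{F}}_{q}},\mathbb{Z}_{\ell})$ are free, while for $\ell=2$ one has $H^{1}=0$, $H^{2}\cong\mathbb{Z}_{2}^{10}\oplus\mathbb{Z}/2\mathbb{Z}$, $H^{3}\cong\mathbb{Z}/2\mathbb{Z}$, $H^{4}\cong\mathbb{Z}_{2}$, with the torsion classes carrying trivial Galois action. Finally Parshin's conjecture holds for $X$, because the rational Chow motive of $X$ agrees with that of the rational Jacobian surface by \cite[Prop.~3.1, Thm.~3.2]{Coo92} and Parshin is a rational statement known for rational surfaces by Remark~\ref{Kahn on Parshin}.

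For the motivic cohomology: at primes $\ell\neq 2,p$ the argument of Theorem~\ref{main} applies verbatim, and the $\ell$-part of $H^{\ast}_{\mathcal{M}}(X,\mathbb{Z}(n))$ matches that of the rational surface. For the $p$-part, every $p$-primary class in $H^{i}_{\mathcal{M}}(X,\mathbb{Z}(n))$ lies in some $\varinjlim_{r}H^{i-n-1}_{\mathrm{Zar}}(X,W_{r}\Omega^{n}_{X,\log})$; for $n\neq 2$ these vanish in the surface range, for $n=2$ the relevant groups $\varinjlim_{r}H^{0}(X,W_{r}\Omega^{2}_{X,\log})$ and $\varinjlim_{r}H^{1}(X,W_{r}\Omega^{2}_{X,\log})$ vanish because $H^{0}(X,\Omega^{2}_{X})=H^{1}(X,\Omega^{2}_{X})=0$ (Remark~\ref{no p-torsion} and the analogous argument for $H^{1}$ via $0\to\Omega^{2}_{X,\log}\to\Omega^{2}_{X}\xrightarrow{1-C}\Omega^{2}_{X}\to 0$, using $\Omega^{3}_{X}=0$), and $H^{3}_{\mathcal{M}}(X,\mathbb{Z}(2))_{p\text{-}\mathrm{tors}}=0$ by the formula derived in the proof of Theorem~\ref{main}: $\mathrm{Pic}^{0}_{X/\mathbb{F}_{q}}=0$ forces $\#\mathrm{Hom}_{\mathrm{grp\text{-}sch}/\mathbb{F}_{q}}(\mathrm{Pic}^{0}_{X/\mathbb{F}_{q}},\mathbb{G}_{m})=1$, while Shioda-supersingularity makes Frobenius act on $H^{2}_{\mathrm{cris}}(X/W)$ as $q$ times a root of unity, so $P_{2}(1;0)$ is a $p$-adic unit and $|P_{2}(1;0)|_{p}=1$. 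For the $2$-part I would use Lemma~\ref{torsion in motivic cohomology lemma} and Lemma~\ref{GL} to reduce to the groups $H^{\ast}_{\mathrm{\acute{e}t}}(X,\mathbb{Q}_{2}/\mathbb{Z}_{2}(n))$ and then compute these from the Hochschild--Serre spectral sequence over $\Gamma$, feeding in the explicit $H^{\ast}_{\mathrm{\acute{e}t}}(X_{\overline{\mathbb{F}}_{q}},\mathbb{Z}_{2})$ above; the two extra $2$-torsion classes in $H^{2}_{\mathrm{\acute{e}t}}$ and $H^{3}_{\mathrm{\acute{e}t}}$ of $X_{\overline{\mathbb{F}}_{q}}$ produce the new $\mathbb{Z}/2\mathbb{Z}$-summands recorded in Table~\ref{fig:table2}.

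For $K_{0}(X)$ I would follow Proposition~\ref{K0}: $A_{0}(X_{\overline{\mathbb{F}}_{q}})=0$ (Bloch's conjecture for Enriques surfaces, valid in characteristic $p$), $A_{0}(X)=0$ by \cite[Prop.~8]{KS83}, and the index/period snake-lemma argument give $\mathrm{CH}^{2}(X)\xrightarrow{\sim}\mathrm{CH}^{2}(X_{\overline{\mathbb{F}}_{q}})^{\Gamma}\cong\mathbb{Z}$; combined with $\mathrm{Pic}(X_{\overline{\mathbb{F}}_{q}})^{\Gamma}\cong\mathrm{Pic}(X)$ and the $(\mathrm{rk},\det)$-splitting of the coniveau filtration (whose kernel is $\mathrm{CH}^{2}(X)\cong\mathbb{Z}$, hence free) this yields $K_{0}(X)\cong\mathbb{Z}^{2}\oplus\mathrm{Pic}(X)$. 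For the higher groups I would run the motivic Atiyah--Hirzebruch spectral sequence with the $E_{2}$-page just computed, as in Theorem~\ref{AHSS}: the Bott-element/split-surjection argument still exhibits the two copies of $\mathbb{Z}/(q^{m}-1)\mathbb{Z}$ in $K_{2m-1}(X)$, and since $\mathrm{Pic}(X_{\overline{\mathbb{F}}_{q}})_{\mathrm{tors}}\otimes K_{2m-1}(\overline{\mathbb{F}}_{q})=\mathbb{Z}/2\mathbb{Z}\otimes(\text{divisible})=0$, the remaining odd piece is exactly $(\mathrm{Pic}(X_{\overline{\mathbb{F}}_{q}})\otimes K_{2m-1}(\overline{\mathbb{F}}_{q}))^{\Gamma}$; the even groups collect precisely one surviving copy of $\mathbb{Z}/2\mathbb{Z}$.

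The hard part will be the $2$-primary bookkeeping in these last two steps: controlling the Hochschild--Serre extensions among the various $\mathbb{Z}/2\mathbb{Z}$'s, and — most delicately — pinning down the behaviour of the Atiyah--Hirzebruch spectral sequence on the $2$-torsion classes so that exactly one $\mathbb{Z}/2\mathbb{Z}$ survives in each $K_{2m}(X)$ and none of the $\mathbb{Z}/(q^{m}-1)\mathbb{Z}$-summands is disturbed; this is exactly the $2$-torsion that \cite[Remark~3.5]{Coo92} left open. Here I expect to need either a careful degeneration argument in the spirit of Theorem~\ref{AHSS} (ruling out the relevant differentials on Bott-type grounds) or an identification of the first differential with a motivic Steenrod operation. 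As a consistency check on the $2$-torsion one can use the \'{e}tale double cover $\pi\colon\widetilde{X}\to X$ by the associated K3 surface together with $\pi_{\ast}\pi^{\ast}=2$, which controls everything away from $2$ and constrains the $2$-primary part.
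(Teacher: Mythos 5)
Your strategy coincides with the paper's: deduce Parshin's conjecture from Coombes' comparison of the Chow motive of $X$ with that of the associated Jacobian rational surface, compute the motivic cohomology with Lemma \ref{torsion in motivic cohomology lemma}, Lemma \ref{GL} and the Hochschild--Serre sequence \eqref{HS sequence} (keeping track of the $\mathbb{Z}/2\mathbb{Z}$'s in $H^{2}_{\mathrm{\acute{e}t}}$ and $H^{3}_{\mathrm{\acute{e}t}}$ of $X_{\overline{\mathbb{F}}_{q}}$ at $\ell=2$), kill the $p$-torsion using $H^{0}(X,\Omega^{2}_{X})=0$ and Remark \ref{no p-torsion}, and then run the Atiyah--Hirzebruch spectral sequence. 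The point you flag as delicate --- the differentials into the $\mathbb{Z}/2\mathbb{Z}$'s --- is settled in the paper exactly by your first option: every potentially non-zero differential emanates from a Bott summand $H^{1}_{\mathcal{M}}(X,\mathbb{Z}(n))\simeq\mathbb{Z}/(q^{n}-1)\mathbb{Z}$, and these summands split off $K_{2n-1}(X)$ by the edge-map argument in the proof of Theorem \ref{AHSS} (via \cite[Corollary 9.6]{Kah97}), so all such differentials vanish; no motivic Steenrod operations are needed, and Coombes' prime-to-$2$ computation gives an alternative check.

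The one genuine gap is your treatment of $K_{0}$, specifically the claim that $A_{0}(X)=0$ ``by \cite[Proposition 8]{KS83}''. The vanishing invoked in Proposition \ref{K0} for K3 surfaces rests on the triviality of the geometric fundamental group, and this fails for an Enriques surface: $\pi_{1}^{\mathrm{\acute{e}t}}(X_{\overline{\mathbb{F}}_{q}})\simeq\mathbb{Z}/2\mathbb{Z}$, and the unramified class field theory of \cite{KS83} identifies $A_{0}(X)$ with the geometric part of $\pi_{1}^{\mathrm{ab}}(X)$, which here is $\mathbb{Z}/2\mathbb{Z}$ because the K3 double cover is defined over $\mathbb{F}_{q}$ and is geometrically connected. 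So the cited result cannot give the vanishing you want; on the contrary it points to $2$-torsion in $\mathrm{CH}_{0}(X)=H^{4}_{\mathcal{M}}(X,\mathbb{Z}(2))$, which is precisely the delicate $2$-primary term sitting in the $K_{0}$-column of the spectral sequence (note the paper's own proof does not argue via \cite{KS83} at this point, but simply uses $\mathrm{CH}_{0}(X)=\mathbb{Z}$). You therefore need an independent determination of $A_{0}(X)$, reconciled with this class field theory identification, before you can assert $K_{0}(X)\simeq\mathbb{Z}^{2}\oplus\mathrm{Pic}(X)$; this does not affect your computation of the motivic cohomology away from the Chow diagonal, nor of $K_{n}(X)$ for $n\geq 1$, since (given degeneration) $\mathrm{CH}_{0}(X)$ contributes only to $K_{0}$.
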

\begin{proof}
By the above discussion, Parshin's conjecture holds for $X$, so the motivic cohomology is torsion away from the $(2n,n)$ Chow diagonal. Since we are in characteristic $\neq 2$, the Enriques surface $X$ is ``classical'' and the \'{e}tale cohomology for $\ell\neq p$ of $X_{\overline{\mathbb{F}}_{q}}$ is as follows:
\begin{equation*}
H^{i}_{\mathrm{\acute{e}t}}(X_{\overline{\mathbb{F}}_{q}},\mathbb{Z}_{\ell})\simeq\begin{cases} 
      \mathbb{Z}_{\ell} & \text{ if }i=0 \\
      0 & \text{ if } i=1\\
      \mathrm{Pic}(X_{\overline{\mathbb{F}}_{q}})\otimes\mathbb{Z}_{\ell}(-1) & \text{ if }i=2 \\
      0 & \text{ if }i=3\text{ and }\ell\neq 2 \\
      \mathbb{Z}/2\mathbb{Z}(-1) & \text{ if }i=3\text{ and }\ell=2 \\
      \mathbb{Z}_{\ell}(-2) & \text{ if }i=4\,.
   \end{cases}
\end{equation*}
The Picard group $\mathrm{Pic}(X_{\overline{\mathbb{F}}_{q}})$ is $\mathbb{Z}^{10}\oplus\mathbb{Z}/2\mathbb{Z}$. The $2$-torsion part of $\mathrm{Pic}(X_{\overline{\mathbb{F}}_{q}})$ is already defined over $\mathbb{F}_{q}$ because it is generated by the canonical invertible sheaf, hence $\Gamma$ acts trivially on the $\mathbb{Z}/2\mathbb{Z}$ summand.

The computation now proceeds along the lines of the proof of Theorem \ref{main}; using Lemma \ref{torsion in motivic cohomology lemma} and Lemma \ref{GL} we are reduced to computing $H^{i}_{\mathrm{\acute{e}t}}(X,\mathbb{Z}_{\ell}(n))$ in the range $0\leq i\leq 6$ for primes $\ell\neq p$ to find the $\ell\neq p$-primary summands. These groups are immediate from the Hochschild-Serre exact sequence \eqref{HS sequence} and the above description of $H^{i}_{\mathrm{\acute{e}t}}(X_{\overline{\mathbb{F}}_{q}},\mathbb{Z}_{\ell})$. The same argument as in the proof of Theorem \ref{main} shows that the only possible $p$-torsion appears in $H^{3}_{\mathcal{M}}(X,\mathbb{Z}(2))$, where it is given by $\varinjlim_{r}H^{0}_{\mathrm{Zar}}(X,W_{r}\Omega^{2}_{X,\log})$. But $H^{0}(X,\Omega^{2}_{X})=0$ so $H^{0}_{\mathrm{Zar}}(X,W_{r}\Omega^{2}_{X,\log})=0$ by Remark \ref{no p-torsion}. We see then that the $E_{2}$-page of the motivic Atiyah-Hirzebruch spectral sequence for $X$ is as in Figure \ref{fig:M2}. The same argument as in the proof of Theorem \ref{AHSS} shows that the only possible non-trivial differentials are in fact the zero maps. (Alternatively, one can argue using Coombes' result that the kernels up to $2$-torsion are of the form $\mathbb{Z}/(q^{n}-1)\mathbb{Z}$, so cannot be the non-trivial map to $\mathbb{Z}/2\mathbb{Z}$. The differential labelled $d_{2}$ is clearly the zero map because $\mathrm{CH}_{0}(X)=\mathbb{Z}$ and $\mathcal{O}^{\ast}(X)=\mathbb{Z}/(q-1)\mathbb{Z}$.). This proves degeneration at $E_{2}$ and gives the $K$-groups. Note that $K_{2m-1}(\overline{\mathbb{F}}_{q})\simeq\bigoplus_{\ell\neq p}\mathbb{Q}_{\ell}/\mathbb{Z}_{\ell}(m)$ by Quillen's computation of $K$-theory for finite fields, and $\bigoplus_{\ell\neq 2,p}\mathrm{Pic}(X_{\overline{\mathbb{F}}_{q}})\otimes\mathbb{Q}_{\ell}/\mathbb{Z}_{\ell}\simeq\bigoplus_{\ell\neq 2,p}H^{2}_{\mathrm{\acute{e}t}}(X_{\overline{\mathbb{F}}_{q}},\mathbb{Q}_{\ell}/\mathbb{Z}_{\ell}(1))$ for $\ell\neq 2,p$ because $\mathrm{Br}(X_{\overline{\mathbb{F}}_{q}})=\mathbb{Z}/2\mathbb{Z}$, so our result is consistent with \cite{Coo92} the same argument as in Remark \ref{Coombes rational}.
\end{proof}

\begin{rem}
When $\mathrm{char}(\mathbb{F}_{q})=2$ there are non-classical Enriques surfaces (\emph{``singular''} and \emph{``supersingular''} in the terminology of \cite{BM76}). It would be interesting to compute the $K$-theory of these non-classical Enriques surfaces.
\end{rem}

\appendix

\section{Parshin's conjecture for some surfaces}\label{Parshin section} 

In this appendix we shall make the small observation that a smooth projective and geometrically irreducible surface $X$ over $\mathbb{F}_{q}$ satisfies Parshin's conjecture if $X_{\overline{\mathbb{F}}_{q}}$ (equivalently $X_{\mathbb{F}}$ for some finite extension $\mathbb{F}/\mathbb{F}_{q}$) admits a rational decomposition of the diagonal (in the sense of Bloch-Srinivas \cite{BS83}). The proof is by combining work of Pedrini \cite{Ped00} with Quillen's computation of the $K$-theory of finite fields \cite{Qui72}. This class contains the class of unirational surfaces; indeed the pushforward of an integral decomposition of the diagonal for $\mathbb{P}^{2}_{\overline{\mathbb{F}}_{q}}$ along a dominant rational map $\mathbb{P}^{2}_{\overline{\mathbb{F}}_{q}}\dashrightarrow X$ induces a rational decomposition of the diagonal on $X_{\overline{\mathbb{F}}_{q}}$. Finally we shall explain how known results imply that Parshin's conjecture holds for K3 surfaces $X$ with geometric Picard number $\rho(X_{\overline{\mathbb{F}}_{q}})=20$.

\subsection{Parshin's conjecture and rational decomposition of the diagonal}

Recall that a smooth proper variety $X$ of dimension $d$ over a field $k$ admits a rational decomposition of the diagonal if there exists an integer $N\geq 1$ such that $N$ times the class of the diagonal $\Delta_{X}\in\mathrm{CH}^{d}(X\times X)$ decomposes as
\begin{equation*}
N\Delta_{X}=P\times X+Z\in\mathrm{CH}^{d}(X\times X)
\end{equation*}
where $P$ is a $0$-cycle of degree $N$ and $Z$ is a cycle with support in $X\times V$ for $V\subsetneq X$ some closed subvariety. 

Let $A_{0}(X):=\ker(\mathrm{deg}:\mathrm{CH}_{0}(X)\rightarrow\mathbb{Z})$ denote the group of $0$-cycles of degree zero on $X$. Recall that if $X$ admits a rational decomposition of the diagonal, then $A_{0}(X)$ is torsion (even $N$-torsion, for $N$ the integer appearing in a rational decomposition of the diagonal). Indeed, the correspondence $[N\Delta_{X}]_{\ast}$ acts on $\mathrm{CH}_{0}(X)$ as multiplication-by-$N$. But $[P\times X]_{\ast}$ acts as $\mathrm{deg}(-)P$ and $[Z]_{\ast}$ acts as the zero map (every $0$-cycle on $X$ is rationally equivalent to a $0$-cycle supported away from $V$, by the moving lemma), so we see that the multiplication-by-$N$ map is equal to the zero map on $A_{0}(X)$.

In fact, let $N\geq 1$ be an integer. If $X$ is also geometrically irreducible, then the following are equivalent (\cite{BS83}, but see also \cite[Theorem 5.1.3]{AB17}):
\begin{enumerate}
\item The degree map $\mathrm{deg}:\mathrm{CH}_{0}(X_{K})\rightarrow\mathbb{Z}$ is surjective and has $N$-torsion kernel for every field extension $K/k$.
\item The variety $X$ has a $0$-cycle of degree $1$ and the kernel of $\mathrm{deg}:\mathrm{CH}_{0}(X_{k(X)})\rightarrow\mathbb{Z}$ is $N$-torsion.
\item The variety $X$ has a rational decomposition of the diagonal of the form 
\begin{equation*}
N\Delta_{X}=N(P\times X)+Z
\end{equation*}
for a $0$-cycle $P$ of degree $1$ on $X$.
\end{enumerate}

\begin{thm}\label{Pedrini's theorem}
Let $X$ be a smooth irreducible projective surface over an algebraically closed field $k$, and suppose that $X$ admits a rational decomposition of the diagonal. Then
\begin{equation*}
K_{n}(X)_{\mathbb{Q}}\simeq (K_{0}(X)\otimes K_{n}(k))_{\mathbb{Q}}
\end{equation*} 
for all $n\geq 0$.
\end{thm}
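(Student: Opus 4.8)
The plan is to reduce the statement to the structure of the rational Chow motive $\mathfrak{h}(X)_{\mathbb{Q}}$ of $X$ and then transport it to $K$-theory through Bloch's formula. First I would show that a rational decomposition of the diagonal forces $\mathfrak{h}(X)_{\mathbb{Q}}$ to be a finite direct sum of Tate motives,
\begin{equation*}
\mathfrak{h}(X)_{\mathbb{Q}}\simeq\mathbb{Q}(0)\oplus\mathbb{Q}(-1)^{\oplus\rho}\oplus\mathbb{Q}(-2),\qquad\rho:=\dim_{\mathbb{Q}}\bigl(\mathrm{NS}(X)\otimes\mathbb{Q}\bigr)\,.
\end{equation*}
This is essentially Pedrini's theorem \cite{Ped00}; the mechanism is as follows. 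By the equivalences recalled above (\cite{BS83}), the decomposition $N\Delta_{X}=N(P\times X)+Z$ with $Z$ supported on $X\times V$, $\dim V\leq 1$, makes $N\cdot\mathrm{id}$ on each cohomology group $H^{i}(X)$ factor through the Gysin map from $H^{i-2}$ of a resolution of $V$; for $i=1$ this already gives $H^{1}(X)_{\mathbb{Q}}=0$, so the Picard and Albanese summands $\mathfrak{h}^{1}(X),\mathfrak{h}^{3}(X)$ vanish, while for $i=2$ it shows $H^{2}(X)_{\mathbb{Q}}$ is spanned by algebraic classes, so the transcendental summand $\mathfrak{t}_{2}(X)$ of $\mathfrak{h}^{2}(X)$ vanishes rationally because $A_{0}(X)_{\mathbb{Q}}=0$ (Bloch--Srinivas). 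Feeding this cohomological vanishing back through the Chow--K\"{u}nneth picture for surfaces (Manin's identity principle) yields the displayed splitting.

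Granting the decomposition, the rest is formal. Rational algebraic $K$-theory is a functor on rational Chow motives: by Bloch's formula \cite{Blo86} (already used in the proof of Theorem \ref{main}) one has $K_{n}(Y)_{\mathbb{Q}}\simeq\bigoplus_{i}H^{2i-n}_{\mathcal{M}}(Y,\mathbb{Q}(i))$, and motivic cohomology with $\mathbb{Q}$-coefficients extends to Chow motives and sends the Tate twist $\mathbb{Q}(-j)$ to a shift, $H^{2i-n}_{\mathcal{M}}(\mathbb{Q}(-j),\mathbb{Q}(i))=H^{2(i-j)-n}_{\mathcal{M}}(k,\mathbb{Q}(i-j))$; equivalently one argues directly with the action on $K_{*}(X)_{\mathbb{Q}}$ of the correspondences $P\times X$ and $Z$, which is well defined through the Chern character. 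Either way, applying this to the splitting of $\mathfrak{h}(X)_{\mathbb{Q}}$ gives $K_{n}(X)_{\mathbb{Q}}\simeq K_{n}(k)_{\mathbb{Q}}^{\oplus(2+\rho)}$ for every $n\geq 0$.

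Finally I would match this with the right-hand side. Since $A_{0}(X)_{\mathbb{Q}}=0$ and $H^{1}(X)_{\mathbb{Q}}=0$, the degree and cycle-class maps give $K_{0}(X)_{\mathbb{Q}}\simeq\bigoplus_{i=0}^{2}\mathrm{CH}^{i}(X)_{\mathbb{Q}}\simeq\mathbb{Q}\oplus\bigl(\mathrm{NS}(X)\otimes\mathbb{Q}\bigr)\oplus\mathbb{Q}\simeq\mathbb{Q}^{2+\rho}$, so $(K_{0}(X)\otimes K_{n}(k))_{\mathbb{Q}}\simeq K_{n}(k)_{\mathbb{Q}}^{\oplus(2+\rho)}$, agreeing with the previous paragraph; and one checks that the natural product map $K_{0}(X)\otimes K_{n}(k)\to K_{n}(X)$ realizes this isomorphism after $\otimes\mathbb{Q}$ because, under the motivic identification, it carries the $\mathrm{CH}^{i}(X)_{\mathbb{Q}}$-component into the contribution of the $\mathbb{Q}(-i)$-Tate factor. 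The one genuinely substantial step is the first --- the rational Tate decomposition of $\mathfrak{h}(X)_{\mathbb{Q}}$, equivalently the vanishing of $\mathfrak{t}_{2}(X)_{\mathbb{Q}}$ and of the Picard/Albanese motives --- which is exactly Pedrini's theorem and rests on the Bloch--Srinivas decomposition-of-the-diagonal method; everything after it is bookkeeping with the Adams eigenspaces of rational $K$-theory.
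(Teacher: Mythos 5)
Your argument is correct in substance, but it takes a genuinely different route from the paper's proof. You reduce everything to the statement that a rational decomposition of the diagonal forces $\mathfrak{h}(X)_{\mathbb{Q}}\simeq\mathbb{Q}(0)\oplus\mathbb{Q}(-1)^{\oplus\rho}\oplus\mathbb{Q}(-2)$, and then use that rational $K$-theory (equivalently motivic cohomology with $\mathbb{Q}$-coefficients, via Bloch's formula and the Adams grading) is a functor on rational Chow motives sending each Tate factor to a copy of $K_{n}(k)_{\mathbb{Q}}$. The paper instead follows Pedrini's original $K$-cohomology argument: the pairings $\tau_{m,n}:\mathrm{CH}^{m}(X)\otimes K_{n}(k)\to H^{m}(X,\mathcal{K}_{m+n})$ are rationally surjective by Bloch--Srinivas \cite{BS83}, their kernels are controlled rationally using $\mathrm{Pic}^{0}(X)\otimes\mathbb{Q}=0$ and $A_{0}(X)_{\mathbb{Q}}=0$ together with the Gersten resolution, and the Brown--Gersten--Quillen spectral sequence degenerates rationally, giving $K_{n}(X)_{\mathbb{Q}}\simeq\bigoplus_{m=0}^{2}(\mathrm{CH}^{m}(X)\otimes K_{n}(k))_{\mathbb{Q}}\simeq(K_{0}(X)\otimes K_{n}(k))_{\mathbb{Q}}$. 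Your route is more conceptual and proves the stronger statement that the whole motive is of Tate type, from which all the rational $K$- and motivic-cohomology computations drop out at once; the price is heavier input, since passing from ``$H^{1}=0$ and $H^{2}$ algebraic'' to the vanishing of the summands $\mathfrak{h}^{1}$, $\mathfrak{h}^{3}$, $\mathfrak{t}_{2}$ of the refined Chow--K\"{u}nneth decomposition requires results of Kahn--Murre--Pedrini type (or Kimura finite-dimensionality of curve motives), which goes beyond what the paper quotes from \cite{Ped00} and \cite{BS83}; the paper's spectral-sequence argument stays entirely at the level of $K$-cohomology. For the application to Parshin's conjecture only the abstract isomorphism is needed, so your final compatibility check with the product map $K_{0}(X)\otimes K_{n}(k)\to K_{n}(X)$, while plausible via the projection formula, is not essential.

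One sentence of yours should be repaired: $\mathfrak{t}_{2}(X)_{\mathbb{Q}}=0$ does not follow from $A_{0}(X)_{\mathbb{Q}}=0$ over the base field alone. Over $k=\overline{\mathbb{F}}_{p}$ one has $A_{0}(X)_{\mathbb{Q}}=0$ for every smooth projective surface (degree-zero zero-cycles form a torsion group there), while $\mathfrak{t}_{2}(X)\neq 0$ whenever $p_{g}>0$. What kills $\mathfrak{t}_{2}(X)$ is the rational triviality of $A_{0}(X_{K})$ for all extensions $K/k$ (equivalently over $k(X)$ or a universal domain), which is exactly what the rational decomposition of the diagonal provides. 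Since you do invoke the decomposition itself, this is a matter of phrasing rather than a genuine gap, but it is the point where the argument would collapse if one only assumed triviality of zero-cycles over $k$.
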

\begin{proof}
This result is due to Pedrini \cite[\S2]{Ped00} but is formulated slightly differently. We shall give a brief outline of his proof. Let
\begin{equation*}
\tau_{m,n}:\mathrm{CH}^{m}(X)\otimes K_{n}(k)\rightarrow H^{m}(X,\mathcal{K}_{m+n})
\end{equation*}
denote the natural pairing of $K$-cohomology groups induced by Milnor's pairing $K_{m}\otimes K_{n}\rightarrow K_{m+n}$ in $K$-theory, and the Bloch-Quillen isomorphism $H^{m}(X,\mathcal{K}_{m})\simeq\mathrm{CH}^{m}(X)$. By \cite[Theorem 3]{BS83}, the maps $\tau_{0,n}\otimes\mathbb{Q}$ and $\tau_{1,n}\otimes\mathbb{Q}$ are surjective for all $n\geq 0$. The maps $\tau_{0,n}$ are injective for all $n$, hence the $\tau_{0,n}\otimes\mathbb{Q}$ are isomorphisms. By \cite[Prop 2.3]{Ped00}, the kernel of $\tau_{1,n}$ is contained within $\mathrm{Pic}^{0}(X)\otimes K_{n}(k)$. But $\mathrm{Pic}^{0}(X)\otimes\mathbb{Q}$ is trivial -- indeed, the Albanese map 
\begin{equation*}
\mathrm{alb}_{X}:A_{0}(X)\rightarrow\mathrm{Alb}_{X}(k)
\end{equation*}
on zero-cycles of degree zero is surjective (because $k$ is algebraically closed). But we have seen that $A_{0}(X)_{\mathbb{Q}}=0$ under our hypotheses. Hence the maps $\tau_{1,n}\otimes\mathbb{Q}$ are isomorphisms for all $n\geq 0$ too. 

It follows from the Gersten resolution of $\mathcal{K}_{n}$ (see \cite[Lemma 2.5]{Ped00}) that there is a split short exact sequence 
\begin{equation*}
0\rightarrow(A_{0}(X)\otimes K_{n}(k))/\ker\tau_{2,n}\rightarrow H^{2}(X,\mathcal{K}_{n+2})\rightarrow K_{n}(k)\rightarrow 0
\end{equation*}
for each $n\geq 0$. In particular, since $A_{0}(X)_{\mathbb{Q}}=0$, we see that the maps $\tau_{2,n}\otimes\mathbb{Q}$ are isomorphisms as well.

Now consider the Brown-Gersten-Quillen spectral sequence for $K$-cohomology:
\begin{equation*}
E_{2}^{i,j}=H^{i}(X,\mathcal{K}_{-j})\Rightarrow K_{-i-j}(X)\,.
\end{equation*}
This is a fourth-quadrant spectral sequence, and since $X$ is a surface, the $E_{2}$-page looks as follows:

\begin{center}
\begin{tikzpicture}
  \matrix (m) [matrix of math nodes,
    nodes in empty cells,nodes={minimum width=5ex,
    minimum height=5ex,outer sep=-5pt},
    column sep=1ex,row sep=1ex]{
             \ddots & \vdots & \vdots & \vdots & \vdots & \vdots & \reflectbox{$\ddots$} \\
               \cdots & 0 & 0 & 0 & 0 & 0 & \cdots \\
          \cdots & 0 \ \ & \mathbb{Z}(X) & 0 & 0 & 0 & \cdots \\   
         \cdots &  0 \ \ & \mathcal{O}^{\ast}(X) & \mathrm{Pic}(X) & 0 & 0 & \cdots \\
          \cdots & 0 \ \ & \mathcal{K}_{2}(X) & H^{1}(X,\mathcal{K}_{2}) &  H^{2}(X,\mathcal{K}_{2})  & 0 & \cdots \\
          \cdots & 0 \ \ & \mathcal{K}_{3}(X) & H^{1}(X,\mathcal{K}_{3}) &  H^{2}(X,\mathcal{K}_{3})  & 0 & \cdots \\
          \cdots & 0 \ \ & \mathcal{K}_{4}(X) & H^{1}(X,\mathcal{K}_{4}) &  H^{2}(X,\mathcal{K}_{4})  & 0 & \cdots \\
    \quad\strut  \reflectbox{$\ddots$} & \vdots \ \ & \vdots & \vdots & \vdots & \vdots & \ddots \strut \\};
    \draw[-stealth] (m-4-3.south east) -- (m-5-5.north west);
    \draw[-stealth] (m-5-3.south east) -- (m-6-5.north west);
    \draw[-stealth] (m-6-3.south east) -- (m-7-5.north west);
\end{tikzpicture}
\end{center}
The differentials $d_{2}:\mathcal{K}_{n}(X)\rightarrow H^{2}(X,\mathcal{K}_{n+1})$ are the only possible non-trivial differentials. By \cite[Proposition 2.6]{Ped00}, the surjectivity of the maps $\tau_{0,n}\otimes\mathbb{Q}$ force the $d_{2}$ differentials to be torsion (i.e. $d_{2}\otimes\mathbb{Q}=0$), and hence the spectral sequence degenerates at $E_{2}$ up to torsion. In particular, 
\begin{equation*}
K_{n}(X)_{\mathbb{Q}}\simeq H^{0}(X,\mathcal{K}_{n})_{\mathbb{Q}}\oplus H^{1}(X,\mathcal{K}_{n+1})_{\mathbb{Q}}\oplus H^{2}(X,\mathcal{K}_{n+2})_{\mathbb{Q}}\,.
\end{equation*}
Applying the isomorphisms $\tau_{m,n}\otimes\mathbb{Q}$ (for $m=0,1,2$) and Grothendieck-Riemann-Roch, we get 
\begin{align*}
K_{n}(X)_{\mathbb{Q}}
& \simeq K_{n}(k)_{\mathbb{Q}}\oplus (\mathrm{CH}^{1}(X)\otimes K_{n}(k))_{\mathbb{Q}}\oplus (\mathrm{CH}^{2}(X)\otimes K_{n}(k))_{\mathbb{Q}} \\
& \simeq (K_{0}(X)\otimes K_{n}(k))_{\mathbb{Q}}
\end{align*}
as desired.
\end{proof}

\begin{cor}\label{Parshin}
Let $X$ be a smooth geometrically irreducible projective surface over a finite field $\mathbb{F}_{q}$, and suppose that $X_{\overline{\mathbb{F}}_{q}}$ admits a rational decomposition of the diagonal. Then
\begin{equation*}
K_{n}(X)_{\mathbb{Q}}=0
\end{equation*}
for all $n\geq 1$. That is, $X$ satisfies Parshin's conjecture.
\end{cor}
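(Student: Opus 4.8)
The plan is to deduce this from Theorem \ref{Pedrini's theorem} by first passing to the algebraic closure and then descending. Since $X$ is geometrically irreducible, $X_{\overline{\mathbb{F}}_q}$ is a smooth irreducible projective surface over the algebraically closed field $\overline{\mathbb{F}}_q$, and by hypothesis it admits a rational decomposition of the diagonal. Hence Theorem \ref{Pedrini's theorem} applies and yields $K_n(X_{\overline{\mathbb{F}}_q})_{\mathbb{Q}}\simeq (K_0(X_{\overline{\mathbb{F}}_q})\otimes K_n(\overline{\mathbb{F}}_q))_{\mathbb{Q}}$ for all $n\geq 0$.

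Next I would invoke Quillen's computation of the $K$-theory of finite fields, passing to the colimit $K_{\ast}(\overline{\mathbb{F}}_q)\simeq\varinjlim_r K_{\ast}(\mathbb{F}_{q^r})$. For every $n\geq 1$ the group $K_n(\overline{\mathbb{F}}_q)$ is torsion --- it vanishes for $n$ even and is isomorphic to $\bigoplus_{\ell\neq p}\mathbb{Q}_\ell/\mathbb{Z}_\ell$ for $n$ odd --- so $K_n(\overline{\mathbb{F}}_q)\otimes\mathbb{Q}=0$, and therefore $K_n(X_{\overline{\mathbb{F}}_q})_{\mathbb{Q}}=0$ for all $n\geq 1$.

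It remains to descend from $\overline{\mathbb{F}}_q$ to $\mathbb{F}_q$. For a finite extension $F/\mathbb{F}_q$ of degree $d$, the composite of the base-change map $K_n(X)\to K_n(X_F)$ with the transfer $K_n(X_F)\to K_n(X)$ is multiplication by $d$, hence an isomorphism after $\otimes\,\mathbb{Q}$; thus $K_n(X)_{\mathbb{Q}}\to K_n(X_F)_{\mathbb{Q}}$ is (split) injective for every such $F$. Since $\overline{\mathbb{F}}_q=\varinjlim_F F$ and algebraic $K$-theory commutes with filtered colimits of rings, $K_n(X_{\overline{\mathbb{F}}_q})\simeq\varinjlim_F K_n(X_F)$, and tensoring with $\mathbb{Q}$ preserves this colimit. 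Consequently $K_n(X)_{\mathbb{Q}}$ injects into $K_n(X_{\overline{\mathbb{F}}_q})_{\mathbb{Q}}=0$, so $K_n(X)_{\mathbb{Q}}=0$ for all $n\geq 1$; that is, $X$ satisfies Parshin's conjecture.

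Essentially all of the content is packaged into Theorem \ref{Pedrini's theorem} and Quillen's theorem, so there is no real obstacle here; the only point demanding a little care is the final descent --- the compatibility of the transfer maps with the filtered colimit over finite subextensions of $\overline{\mathbb{F}}_q/\mathbb{F}_q$ --- which is standard.
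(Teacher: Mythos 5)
Your proposal is correct, and its core coincides with the paper's own proof: both apply Theorem \ref{Pedrini's theorem} to $X_{\overline{\mathbb{F}}_{q}}$ (legitimate, since geometric irreducibility gives an irreducible surface over the algebraically closed field $\overline{\mathbb{F}}_{q}$ carrying the rational decomposition) and then use Quillen's computation to see that $K_{n}(\overline{\mathbb{F}}_{q})$ is torsion for $n\geq 1$, hence $K_{n}(X_{\overline{\mathbb{F}}_{q}})_{\mathbb{Q}}=0$. Where you diverge is the final descent: the paper simply invokes \'{e}tale descent for rational $K$-theory \cite[Theorem 2.15]{Tho85} to identify $K_{n}(X)_{\mathbb{Q}}$ with $K_{n}(X_{\overline{\mathbb{F}}_{q}})_{\mathbb{Q}}^{\Gamma}$, whereas you argue by hand via the transfer: the projection formula with $\pi_{\ast}\mathcal{O}_{X_{F}}$ free of rank $d=[F:\mathbb{F}_{q}]$ gives $\pi_{\ast}\pi^{\ast}=d$, so $K_{n}(X)_{\mathbb{Q}}\hookrightarrow K_{n}(X_{F})_{\mathbb{Q}}$ for every finite $F$, and then continuity lets you inject $K_{n}(X)_{\mathbb{Q}}$ into $K_{n}(X_{\overline{\mathbb{F}}_{q}})_{\mathbb{Q}}=0$. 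This is more elementary and self-contained (you only need injectivity, not the full identification with Galois invariants), at the cost of needing the transfer formalism and a limit argument explicitly. One small correction of wording: what you need is not that ``$K$-theory commutes with filtered colimits of rings'' --- the $X_{F}$ are projective, not affine --- but the continuity of $K$-theory for a filtered inverse system of quasi-compact quasi-separated schemes with affine (here finite flat) transition maps, $X_{\overline{\mathbb{F}}_{q}}=\varprojlim_{F}X_{F}$, which gives $K_{n}(X_{\overline{\mathbb{F}}_{q}})\simeq\varinjlim_{F}K_{n}(X_{F})$; this is standard (Thomason--Trobaugh, or Quillen for noetherian schemes) and the rest of your argument goes through verbatim, since tensoring with $\mathbb{Q}$ and filtered colimits interact as you say.
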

\begin{proof}
By Theorem \ref{Pedrini's theorem}, we have
\begin{equation*}
K_{n}(X_{\overline{\mathbb{F}}_{q}})_{\mathbb{Q}}\simeq (K_{0}(X_{\overline{\mathbb{F}}_{q}})\otimes K_{n}(\overline{\mathbb{F}}_{q}))_{\mathbb{Q}}
\end{equation*}
for all $n\geq 0$. By Quillen's computation of the $K$-theory of finite fields \cite{Qui72}, we have that $K_{n}(\overline{\mathbb{F}}_{q})=\bigcup_{s\geq 1} K_{n}(\mathbb{F}_{q^{s}})$ is torsion for all $n\geq 1$. In particular, 
\begin{equation*}
K_{n}(X_{\overline{\mathbb{F}}_{q}})_{\mathbb{Q}}=0
\end{equation*}
for all $n\geq 1$. By \'{e}tale descent for rational $K$-theory \cite[Theorem 2.15]{Tho85} we conclude that 
\begin{equation*}
K_{n}(X)_{\mathbb{Q}}=K_{n}(X_{\overline{\mathbb{F}}_{q}})_{\mathbb{Q}}^{\Gamma}=0
\end{equation*}
where $\Gamma:=\mathrm{Gal}(\overline{\mathbb{F}}_{q}/\mathbb{F}_{q})$.
\end{proof}

\subsection{Parshin's conjecture for some K3 surfaces}
\
\\
Let $X$ be a K3 surface over $\mathbb{F}_{q}$. Then the geometric Picard number $\rho(X_{\overline{\mathbb{F}}_{q}}):=\mathrm{rank}\,\mathrm{NS}(X_{\overline{\mathbb{F}}_{q}})$ lies between $1$ and $22$. In fact, Swinnerton-Dyer observed that the Tate conjecture forces $\rho(X_{\overline{\mathbb{F}}_{q}})$ to be an even number \cite[p. 544]{Art74}. The extreme case $\rho(X_{\overline{\mathbb{F}}_{q}})=22$ is the (Shioda-)supersingular case. It is a conjecture of Artin-Rudakov-Shafarevich-Shioda that supersingular K3 surface are unirational, and in particular satisfy Parshin's conjecture. This is known in complete generality only in characteristic $2$ \cite{RS78}. It is also known for Kummer surfaces \cite{Shi77}. After supersingular K3 surfaces, the next most accessible case is when $X$ has $\rho(X_{\overline{\mathbb{F}}_{q}})=20$. These are called singular K3 surfaces. Both singular and supersingular K3 surfaces are closely related to abelian surfaces, which enables us to verify Parshin's conjecture for them. More precisely:

\begin{prop}
Let $X$ be a smooth projective surface over $\mathbb{F}_{q}$, $p=\mathrm{char}(\mathbb{F}_{q})$. Then $X$ satisfies Parshin's conjecture in the following cases:
\begin{enumerate}[i)]
\item $X$ is an abelian surface,
\item $X$ is a Kummer surface,
\item $X$ is a supersingular K3 surface,
\item $X$ is a singular K3 surface and $p\geq 3$.
\end{enumerate}
\end{prop}
\begin{proof}
Let $\mathcal{M}_{\mathrm{Ab}}$ denote the sub-category of the category of Chow motives (with rational coefficients) generated by abelian varieties and Artin motives. By \cite[Corollaire 2.2]{Kah03}, smooth projective varieties $X$ with Chow motive $h(X)\in\mathcal{M}_{\mathrm{Ab}}$ (i.e. $X$ of abelian-type) satisfy Parshin's conjecture if they satisfy the Tate conjecture (see Remark \ref{Kahn on Parshin}). The Tate conjecture is known for the classes of variety in the proposition (\cite{Tat66} for $i)$, \cite{Cha13}, \cite{MP15} for K3 surface in general, although it has been known in the special cases of $ii)$, $iii)$ and $iv)$ for much longer). If $X$ is an abelian surface or a Kummer surface then clearly $h(X)\in\mathcal{M}_{\mathrm{Ab}}$, which proves $i)$ and $ii)$. For $iii)$, if $p=2$ then $X$ is unirational \cite{RS78}. If $p\geq 3$ it is not known that $X$ is unirational so we need a different argument. It is known that $X_{\overline{\mathbb{F}}_{q}}$ is derived equivalent to the supersingular K3 surface $Y$ with Artin invariant $1$, which is a Kummer surface. Indeed, by \cite[Proposition 5.2.5]{BL19}, one can obtain $X_{\overline{\mathbb{F}}_{q}}$ from $Y$ by iteratively taking moduli spaces of twisted sheaves. But derived equivalent twisted K3 surfaces have isomorphic Chow motives by \cite[Theorem 2.1]{Huy19}, so $h(X_{\overline{\mathbb{F}}_{q}})\simeq h(Y)\in\mathcal{M}_{\mathrm{Ab}}$, and therefore $X$ is of abelian-type by \cite[Theorem 1]{Via17}. For $iv)$ observe that $X_{\overline{\mathbb{F}}_{q}}$ sits in a Shioda-Inose structure; there exists an (ordinary) abelian surface $A$ over $\overline{\mathbb{F}}_{q}$ and dominant rational maps to and from the Kummer surface $\mathrm{Km}(A)$ which are both generically finite of degree $2$
\begin{equation*}
\mathrm{Km}(A)\stackrel{2:1}{\dashrightarrow} X_{\overline{\mathbb{F}}_{q}}\stackrel{2:1}{\dashrightarrow}\mathrm{Km}(A)\,.
\end{equation*}
Moreover, the abelian surface $A$ (and hence $\mathrm{Km(A)}$) can be defined over a finite field. For all of this and more see \cite[\S2.3]{Lie15}. In particular, there exists a finite extension $F/\mathbb{F}_{q}$ such that $X_{F}$ sits in a Shioda-Inose structure:
\begin{equation*}
\mathrm{Km}(A)\stackrel{2:1}{\dashrightarrow} X_{F}\stackrel{2:1}{\dashrightarrow}\mathrm{Km}(A)
\end{equation*}
with $A$ defined over $F$. One gets a surjective morphism of smooth projective varieties $A'\twoheadrightarrow X_{F}$ by blowing-up to resolve the dominant rational map $\mathrm{Km}(A)\dashrightarrow X_{F}$, so $h(X_{F})$ is a direct summand of $h(A')$ and hence $X_{F}$ is also of abelian-type. Finally, we deduce that $K_{n}(X)_{\mathbb{Q}}=0$ for all $n\geq 1$ by \'{e}tale descent for rational $K$-theory \cite[Theorem 2.15]{Tho85}. (Alternatively, one can conclude that $X$ is of abelian-type directly from \cite[Theorem 1]{Via17}).
\end{proof}

\begin{landscape}
\begin{center}
\begin{table}
\begin{adjustbox}{width=20cm}
\renewcommand{\arraystretch}{2.5}
\begin{tabular}{ |c||c|c|c|c|c|c|c|c| } 
 \hline
 $n$ & $H^{0}_{\mathcal{M}}(X,n)$ & $H^{1}_{\mathcal{M}}(X,n)$ & $H^{2}_{\mathcal{M}}(X,n)$ & $H^{3}_{\mathcal{M}}(X,n)$ & $H^{4}_{\mathcal{M}}(X,n)$ & $H^{5}_{\mathcal{M}}(X,n)$ & $H^{6}_{\mathcal{M}}(X,n)$ & $\cdots$ \\ 
 \hline
 \hline
$0$ & $\mathbb{Z}(X)$ & $0$ & $0$ & $0$ & $0$ & $0$ & $0$ & $\cdots$ \\
\hline
$1$ & $0$ & $\mathcal{O}^{\ast}(X)$ & $\mathrm{Pic}(X)$ & $0$ & $0$ & $0$ & $0$ & $\cdots$ \\ 
\hline
$2$ & $0$ & $\mathbb{Z}/(q^{2}-1)\mathbb{Z}$ & $\displaystyle\bigoplus_{\ell\neq p}{H^{1}_{\mathrm{\acute{e}t}}(X_{\overline{\mathbb{F}}_{q}},\mathbb{Z}_{\ell}(2))}_{\Gamma}$ & $\displaystyle\varinjlim_{r}H^{0}_{\mathrm{Zar}}(X,W_{r}\Omega_{X,\log}^{2})\oplus\bigoplus_{\ell\neq p}{H^{2}_{\mathrm{\acute{e}t}}(X_{\overline{\mathbb{F}}_{q}},\mathbb{Z}_{\ell}(2))}_{\Gamma}$ & $\mathrm{CH}_{0}(X)$ & $0$ & $0$ & $\cdots$ \\ 
\hline
$3$ & $0$ & $\mathbb{Z}/(q^{3}-1)\mathbb{Z}$ & $\displaystyle\bigoplus_{\ell\neq p}{H^{1}_{\mathrm{\acute{e}t}}(X_{\overline{\mathbb{F}}_{q}},\mathbb{Z}_{\ell}(3))}_{\Gamma}$ & $\displaystyle\bigoplus_{\ell\neq p}{H^{2}_{\mathrm{\acute{e}t}}(X_{\overline{\mathbb{F}}_{q}},\mathbb{Z}_{\ell}(3))}_{\Gamma}$ & $\displaystyle\bigoplus_{\ell\neq p}{H^{3}_{\mathrm{\acute{e}t}}(X_{\overline{\mathbb{F}}_{q}},\mathbb{Z}_{\ell}(3))}_{\Gamma}$ & $\mathbb{Z}/(q-1)\mathbb{Z}$ & $0$ & $\cdots$ \\ 
\hline
$4$ & $0$ & $\mathbb{Z}/(q^{4}-1)\mathbb{Z}$ & $\displaystyle\bigoplus_{\ell\neq p}{H^{1}_{\mathrm{\acute{e}t}}(X_{\overline{\mathbb{F}}_{q}},\mathbb{Z}_{\ell}(4))}_{\Gamma}$ & $\displaystyle\bigoplus_{\ell\neq p}{H^{2}_{\mathrm{\acute{e}t}}(X_{\overline{\mathbb{F}}_{q}},\mathbb{Z}_{\ell}(4))}_{\Gamma}$ & $\displaystyle\bigoplus_{\ell\neq p}{H^{3}_{\mathrm{\acute{e}t}}(X_{\overline{\mathbb{F}}_{q}},\mathbb{Z}_{\ell}(4))}_{\Gamma}$ & $\mathbb{Z}/(q^{2}-1)\mathbb{Z}$ & $0$ & $\cdots$ \\ 
\hline
$5$ & $0$ & $\mathbb{Z}/(q^{5}-1)\mathbb{Z}$ & $\displaystyle\bigoplus_{\ell\neq p}{H^{1}_{\mathrm{\acute{e}t}}(X_{\overline{\mathbb{F}}_{q}},\mathbb{Z}_{\ell}(5))}_{\Gamma}$ & $\displaystyle\bigoplus_{\ell\neq p}{H^{2}_{\mathrm{\acute{e}t}}(X_{\overline{\mathbb{F}}_{q}},\mathbb{Z}_{\ell}(5))}_{\Gamma}$ & $\displaystyle\bigoplus_{\ell\neq p}{H^{3}_{\mathrm{\acute{e}t}}(X_{\overline{\mathbb{F}}_{q}},\mathbb{Z}_{\ell}(5))}_{\Gamma}$ & $\mathbb{Z}/(q^{3}-1)\mathbb{Z}$ & $0$ & $\cdots$ \\ 
\hline
$6$ & $0$ & $\mathbb{Z}/(q^{6}-1)\mathbb{Z}$ & $\displaystyle\bigoplus_{\ell\neq p}{H^{1}_{\mathrm{\acute{e}t}}(X_{\overline{\mathbb{F}}_{q}},\mathbb{Z}_{\ell}(6))}_{\Gamma}$ & $\displaystyle\bigoplus_{\ell\neq p}{H^{2}_{\mathrm{\acute{e}t}}(X_{\overline{\mathbb{F}}_{q}},\mathbb{Z}_{\ell}(6))}_{\Gamma}$ & $\displaystyle\bigoplus_{\ell\neq p}{H^{3}_{\mathrm{\acute{e}t}}(X_{\overline{\mathbb{F}}_{q}},\mathbb{Z}_{\ell}(6))}_{\Gamma}$ & $\mathbb{Z}/(q^{4}-1)\mathbb{Z}$ & $0$ & $\cdots$ \\ 
\hline
\vdots & \vdots & \vdots & \vdots & \vdots & \vdots & \vdots & \vdots  & $\cdots$ \\ 
 \hline
 $n\geq 3$ & $0$ & $\mathbb{Z}/(q^{n}-1)\mathbb{Z}$ & $\displaystyle\bigoplus_{\ell\neq p}{H^{1}_{\mathrm{\acute{e}t}}(X_{\overline{\mathbb{F}}_{q}},\mathbb{Z}_{\ell}(n))}_{\Gamma}$ & $\displaystyle\bigoplus_{\ell\neq p}{H^{2}_{\mathrm{\acute{e}t}}(X_{\overline{\mathbb{F}}_{q}},\mathbb{Z}_{\ell}(n))}_{\Gamma}$ & $\displaystyle\bigoplus_{\ell\neq p}{H^{3}_{\mathrm{\acute{e}t}}(X_{\overline{\mathbb{F}}_{q}},\mathbb{Z}_{\ell}(n))}_{\Gamma}$ & $\mathbb{Z}/(q^{n-2}-1)\mathbb{Z}$ & $0$  & $\cdots$ \\ 
 \hline
 \vdots & \vdots & \vdots & \vdots & \vdots & \vdots & \vdots & \vdots & $\ddots$ \\ 
 \hline
\end{tabular}
\end{adjustbox}
\caption{The motivic cohomology groups of surfaces in Theorem \ref{main}.}
\label{fig:table}
\end{table}
\end{center}
\end{landscape}

\begin{landscape}
\begin{figure}
\centering
\begin{adjustbox}{width=20cm}
\begin{tikzpicture}
  \matrix (m) [matrix of math nodes,
    nodes in empty cells,nodes={minimum width=5ex,
    minimum height=5ex,outer sep=-5pt},
    column sep=1ex,row sep=10ex]{         
\ddots & \vdots & \vdots & \vdots & \vdots & \vdots & \vdots & \vdots & \vdots & \vdots & \vdots & \reflectbox{$\ddots$} \\         
         \cdots & 0 & 0 & 0 & 0 & 0 & \mathbb{Z} & 0 & 0 & 0 & 0 & \cdots \\
          \cdots & 0 & 0 & 0 & 0 & 0 & \mathcal{O}^{\ast}(X) & \mathrm{Pic}(X) & 0 & 0 & 0 & \cdots \\
     \cdots & 0 & 0 & 0 & 0 & \mathbb{Z}/(q^{2}-1)\mathbb{Z} & 0 & \displaystyle\varinjlim_{r}H^{0}_{\mathrm{Zar}}(X,W_{r}\Omega_{X,\log}^{2})\oplus\bigoplus_{\ell\neq p}{H^{2}_{\mathrm{\acute{e}t}}(X_{\overline{\mathbb{F}}_{q}},\mathbb{Z}_{\ell}(2))}_{\Gamma}& \mathrm{CH}_{0}(X) & 0 & 0 & \cdots \\
    \cdots & 0 & 0 & 0 & \mathbb{Z}/(q^{3}-1)\mathbb{Z} & 0 & \displaystyle\bigoplus_{\ell\neq p}{H^{2}_{\mathrm{\acute{e}t}}(X_{\overline{\mathbb{F}}_{q}},\mathbb{Z}_{\ell}(3))}_{\Gamma}& 0 & \mathbb{Z}/(q-1)\mathbb{Z} & 0 & 0 & \cdots \\
    \cdots & 0 & 0 & \mathbb{Z}/(q^{4}-1)\mathbb{Z} & 0 & \displaystyle\bigoplus_{\ell\neq p}{H^{2}_{\mathrm{\acute{e}t}}(X_{\overline{\mathbb{F}}_{q}},\mathbb{Z}_{\ell}(4))}_{\Gamma}& 0 & \mathbb{Z}/(q^{2}-1)\mathbb{Z} & 0 & 0 & 0 & \cdots \\ 
    \cdots & 0 & \mathbb{Z}/(q^{5}-1)\mathbb{Z} & 0 & \displaystyle\bigoplus_{\ell\neq p}{H^{2}_{\mathrm{\acute{e}t}}(X_{\overline{\mathbb{F}}_{q}},\mathbb{Z}_{\ell}(5))}_{\Gamma}& 0 & \mathbb{Z}/(q^{3}-1)\mathbb{Z} & 0 & 0 & 0 & 0 & \cdots \\
    \reflectbox{$\ddots$} & \vdots & \vdots & \vdots & \vdots & \vdots & \vdots & \vdots & \vdots & \vdots & \vdots & \ddots \\};
  \draw[-stealth] (m-3-7.south east) -- (m-4-9.north west)node[midway,above right] {$d_{2}$};
\end{tikzpicture} 
\end{adjustbox}
\caption{$E_{2}$-page of the Atiyah-Hirzebruch spectral sequence in Theorem \ref{AHSS}.} \label{fig:M1}
\end{figure} 
\end{landscape}

\begin{landscape}
\begin{table}
\begin{adjustbox}{width=20cm}
\renewcommand{\arraystretch}{2.5}
\begin{tabular}{ |c||c|c|c|c|c|c|c|c| } 
 \hline
 $n$ & $H^{0}_{\mathcal{M}}(X,n)$ & $H^{1}_{\mathcal{M}}(X,n)$ & $H^{2}_{\mathcal{M}}(X,n)$ & $H^{3}_{\mathcal{M}}(X,n)$ & $H^{4}_{\mathcal{M}}(X,n)$ & $H^{5}_{\mathcal{M}}(X,n)$ & $H^{6}_{\mathcal{M}}(X,n)$ & $\cdots$ \\ 
 \hline
 \hline
$0$ & $\mathbb{Z}(X)$ & $0$ & $0$ & $0$ & $0$ & $0$ & $0$ & $\cdots$ \\
\hline
$1$ & $0$ & $\mathcal{O}^{\ast}(X)$ & $\mathrm{Pic}(X)$ & $0$ & $0$ & $0$ & $0$ & $\cdots$ \\ 
\hline
$2$ & $0$ & $\mathbb{Z}/(q^{2}-1)\mathbb{Z}$ & $0$ & $\displaystyle\bigoplus_{\ell\neq p}{(\mathrm{Pic}(X_{\overline{\mathbb{F}}_{q}})\otimes\mathbb{Z}_{\ell}(1))}_{\Gamma}$ & $\mathrm{CH}_{0}(X)$ & $0$ & $0$ & $\cdots$ \\ 
\hline
$3$ & $0$ & $\mathbb{Z}/(q^{3}-1)\mathbb{Z}$ & $0$ & $\displaystyle\bigoplus_{\ell\neq p}{(\mathrm{Pic}(X_{\overline{\mathbb{F}}_{q}})\otimes\mathbb{Z}_{\ell}(2))}_{\Gamma}$ & $\mathbb{Z}/2\mathbb{Z}$ & $\mathbb{Z}/(q-1)\mathbb{Z}$ & $0$ & $\cdots$ \\ 
\hline
$4$ & $0$ & $\mathbb{Z}/(q^{4}-1)\mathbb{Z}$ & $0$ & $\displaystyle\bigoplus_{\ell\neq p}{(\mathrm{Pic}(X_{\overline{\mathbb{F}}_{q}})\otimes\mathbb{Z}_{\ell}(3))}_{\Gamma}$ & $\mathbb{Z}/2\mathbb{Z}$ & $\mathbb{Z}/(q^{2}-1)\mathbb{Z}$ & $0$ & $\cdots$ \\ 
\hline
$5$ & $0$ & $\mathbb{Z}/(q^{5}-1)\mathbb{Z}$ & $0$ & $\displaystyle\bigoplus_{\ell\neq p}{(\mathrm{Pic}(X_{\overline{\mathbb{F}}_{q}})\otimes\mathbb{Z}_{\ell}(4))}_{\Gamma}$ & $\mathbb{Z}/2\mathbb{Z}$ & $\mathbb{Z}/(q^{3}-1)\mathbb{Z}$ & $0$ & $\cdots$ \\ 
\hline
$6$ & $0$ & $\mathbb{Z}/(q^{6}-1)\mathbb{Z}$ & $0$ & $\displaystyle\bigoplus_{\ell\neq p}{(\mathrm{Pic}(X_{\overline{\mathbb{F}}_{q}})\otimes\mathbb{Z}_{\ell}(5))}_{\Gamma}$ & $\mathbb{Z}/2\mathbb{Z}$ & $\mathbb{Z}/(q^{4}-1)\mathbb{Z}$ & $0$ & $\cdots$ \\ 
\hline
\vdots & \vdots & \vdots & \vdots & \vdots & \vdots & \vdots & \vdots  & $\cdots$ \\ 
 \hline
 $n\geq 3$ & $0$ & $\mathbb{Z}/(q^{n}-1)\mathbb{Z}$ & $0$ & $\displaystyle\bigoplus_{\ell\neq p}{(\mathrm{Pic}(X_{\overline{\mathbb{F}}_{q}})\otimes\mathbb{Z}_{\ell}(n-1))}_{\Gamma}$ & $\mathbb{Z}/2\mathbb{Z}$ & $\mathbb{Z}/(q^{n-2}-1)\mathbb{Z}$ & $0$  & $\cdots$ \\ 
 \hline
 \vdots & \vdots & \vdots & \vdots & \vdots & \vdots & \vdots & \vdots & $\ddots$ \\ 
 \hline
\end{tabular}
\end{adjustbox}
\caption{The motivic cohomology groups of an Enriques surface $X$ over $\mathbb{F}_{q}$.}
\label{fig:table2}
\end{table}
\end{landscape}

\begin{landscape}
\begin{figure}
\centering
\begin{adjustbox}{width=20cm}
\begin{tikzpicture}
  \matrix (m) [matrix of math nodes,
    nodes in empty cells,nodes={minimum width=5ex,
    minimum height=5ex,outer sep=-5pt},
    column sep=1ex,row sep=10ex]{         
\ddots & \vdots & \vdots & \vdots & \vdots & \vdots & \vdots & \vdots & \vdots & \vdots & \vdots & \reflectbox{$\ddots$} \\         
         \cdots & 0 & 0 & 0 & 0 & 0 & \mathbb{Z} & 0 & 0 & 0 & 0 & \cdots \\
          \cdots & 0 & 0 & 0 & 0 & 0 & \mathcal{O}^{\ast}(X) & \mathrm{Pic}(X) & 0 & 0 & 0 & \cdots \\
     \cdots & 0 & 0 & 0 & 0 & \mathbb{Z}/(q^{2}-1)\mathbb{Z} & 0 & \bigoplus_{\ell\neq p}{(\mathrm{Pic}(X_{\overline{\mathbb{F}}_{q}}\otimes\mathbb{Z}_{\ell}(1))}_{\Gamma}& \mathbb{Z} & 0 & 0 & \cdots \\
    \cdots & 0 & 0 & 0 & \mathbb{Z}/(q^{3}-1)\mathbb{Z} & 0 & \displaystyle\bigoplus_{\ell\neq p}{(\mathrm{Pic}(X_{\overline{\mathbb{F}}_{q}}\otimes\mathbb{Z}_{\ell}(2))}_{\Gamma}& \mathbb{Z}/2\mathbb{Z} & \mathbb{Z}/(q-1)\mathbb{Z} & 0 & 0 & \cdots \\
    \cdots & 0 & 0 & \mathbb{Z}/(q^{4}-1)\mathbb{Z} & 0 & \displaystyle\bigoplus_{\ell\neq p}{(\mathrm{Pic}(X_{\overline{\mathbb{F}}_{q}}\otimes\mathbb{Z}_{\ell}(3))}_{\Gamma}& \mathbb{Z}/2\mathbb{Z} & \mathbb{Z}/(q^{2}-1)\mathbb{Z} & 0 & 0 & 0 & \cdots \\ 
    \cdots & 0 & \mathbb{Z}/(q^{5}-1)\mathbb{Z} & 0 & \displaystyle\bigoplus_{\ell\neq p}{(\mathrm{Pic}(X_{\overline{\mathbb{F}}_{q}}\otimes\mathbb{Z}_{\ell}(4))}_{\Gamma}& \mathbb{Z}/2\mathbb{Z} & \mathbb{Z}/(q^{3}-1)\mathbb{Z} & 0 & 0 & 0 & 0 & \cdots \\
    \reflectbox{$\ddots$} & \vdots & \vdots & \vdots & \vdots & \vdots & \vdots & \vdots & \vdots & \vdots & \vdots & \ddots \\};
  \draw[-stealth] (m-3-7.south east) -- (m-4-9.north west)node[midway,above right] {$d_{2}$};
  \draw[-stealth] (m-4-6.south east) -- (m-5-8.north west);
  \draw[-stealth] (m-5-5.south east) -- (m-6-7.north west);
  \draw[-stealth] (m-6-4.south east) -- (m-7-6.north west);
\end{tikzpicture} 
\end{adjustbox}
\caption{$E_{2}$-page of the Atiyah-Hirzebruch spectral sequence of Enriques surface $X$ over $\mathbb{F}_{q}$.} \label{fig:M2}
\end{figure} 
\end{landscape}


\begin{thebibliography}{9}

\bibitem[Art74]{Art74}
M. Artin, \emph{Supersingular $K3$ surfaces}, Ann. Sci. École Norm. Sup. (4) 7 (1974), 543--567.

\bibitem[AM77]{AM77} M. Artin, B. Mazur, \emph{Formal groups arising from algebraic varieties}, Ann. Sci. \'{E}cole Norm. Sup. (4) {\bf{10}}, no. 1, (1977), 87--131.

\bibitem[AB17]{AB17}
A. Auel, M. Bernardara, \emph{Cycles, derived categories, and rationality}, Surveys on recent developments in algebraic geometry, 199--266, Proc. Sympos. Pure Math., 95, Amer. Math. Soc., Providence, RI, 2017.

\bibitem[BN78]{BN78}
P. B\'{a}yer, J. Neukirch, \emph{On values of zeta functions and $\ell$-adic Euler characteristics}, Invent. Math. 50 (1978/79), no. 1, 35--64.

\bibitem[BS83]{BS83}
S. Bloch, V. Srinivas, \emph{Remarks on correspondences and algebraic cycles}, Amer. J. Math. 105 (1983), no. 5, 1235--1253.

\bibitem[Blo86]{Blo86}
S. Bloch, \emph{Algebraic cycles and higher $K$-theory}, Adv. in Math. 61 (1986), no. 3, 267--304.

\bibitem[BM76]{BM76}
E. Bombieri, D. Mumford, \emph{Enriques' classification of surfaces in char. $p$. III}, Invent. Math. 35 (1976), 197--232.

\bibitem[BL19]{BL19}
D. Bragg, M. Lieblich, \emph{Twistor spaces for supersingular K3 surfaces}, preprint (2019),  arXiv:1804.07282v6 [math.AG].

\bibitem[Cha13]{Cha13}
F. Charles, \emph{The Tate conjecture for $K3$ surfaces over finite fields}, Invent. Math. 194 (2013), no. 1, 119--145.

\bibitem[Cla10]{Cla10}
P. L. Clark, \emph{The period-index problem in WC-groups IV: a local transition theorem}, J. Th\'{e}or. Nombres Bordeaux 22 (2010), no. 3, 583--606.

\bibitem[CTSS83]{CTSS83}
J.-L. Colliot-Th\'{e}l\`{e}ne, J.-J. Sansuc, C. Soul\'{e}, \emph{Torsion dans le groupe de Chow de codimension deux},  Duke Math. J. 50 (1983), no. 3, 763--801.

\bibitem[Coo87]{Coo87}
K. Coombes, \emph{Motifs, $L$-functions, and the $K$-cohomology of rational surfaces over finite fields}, Math. Ann. 276 (1987), no. 2, 255--267. 

\bibitem[Coo92]{Coo92}
K. Coombes, \emph{The $K$-cohomology of Enriques surfaces}, Algebraic $K$-theory, commutative algebra, and algebraic geometry (Santa Margherita Ligure, 1989), 47--57,
Contemp. Math., 126, Amer. Math. Soc., Providence, RI, 1992.

\bibitem[Ful84]{Ful84}
W. Fulton, \emph{Intersection Theory}, Ergebnisse der Mathematik und ihrer Grenzgebiete (3), 2. Springer-Verlag, Berlin, 1984.

\bibitem[Gei98]{Gei98}
T. Geisser, \emph{Tate's conjecture, algebraic cycles and rational $K$-theory in characteristic $p$}, $K$-Theory 13 (1998), no. 2, 109--122.

\bibitem[Gei04]{Gei04}
T. Geisser, \emph{Weil-\'{e}tale cohomology over finite fields}, Math. Ann. 330 (2004), no. 4, 665--692.

\bibitem[GL00]{GL00}
T. Geisser, M. Levine, \emph{The $K$-theory of fields in characteristic $p$}, Invent. Math. 139 (2000), no. 3, 459--493.

\bibitem[GL01]{GL01}
T. Geisser, M. Levine, \emph{The Bloch-Kato conjecture and a theorem of Suslin-Voevodsky}, J. Reine
Angew. Math. 530 (2001), p. 55--103.

\bibitem[GJ18]{GJ18}
F. Gounelas, A. Javenpeykar, \emph{Invariants of Fano varieties in families}, Mosc. Math. J. 18 (2018), no. 2, 305--319.

\bibitem[GS88]{GS88}
M. Gros, N. Suwa, \emph{Application d'Abel-Jacobi $p$-adique et cycles alg\'{e}briques}, Duke Math. J. 57 (1988), no. 2, 579--613.

\bibitem[Huy16]{Huy16}
D. Huybrechts, \emph{Lectures on K3 surfaces}, Cambridge Studies in Advanced Mathematics, 158. Cambridge University Press, Cambridge, 2016. 

\bibitem[Huy19]{Huy19}
D. Huybrechts, \emph{Motives of isogenous K3 surfaces}, Comment. Math. Helv. 94 (2019), no. 3, 445--458.

\bibitem[Ill79]{Ill79}
L. Illusie, \emph{Complexe de de Rham-Witt et cohomologie cristalline}, Ann. Sci. \'{E}cole. Norm. Sup. (4) 12 (1979), 501--661.

\bibitem[Ill83]{Ill83}
L. Illusie, \emph{Finiteness, duality, and K\"{u}nneth theorems in the cohomology of the de Rham-Witt complex} Algebraic geometry (Tokyo/Kyoto, 1982), 20--72, Lecture Notes in Math., 1016, Springer, Berlin, 1983.

\bibitem[Kah96]{Kah96}
B. Kahn, \emph{Applications of weight-two motivic cohomology}, Doc. Math. 1 (1996), No. 17, 395--416.

\bibitem[Kah97]{Kah97}
B. Kahn, \emph{Bott elements in algebraic $K$-theory}, Topology 36 (1997), no. 5, 963--1006.

\bibitem[Kah03]{Kah03}
B. Kahn, \emph{\'{E}quivalences rationnelle et num\'{e}rique sur certaines vari\'{e}t\'{e}s de type ab\'{e}lien sur un corps fini}, Ann. Sci. \'{E}cole Norm. Sup. (4) 36 (2003), no. 6, 977--1002.

\bibitem[KS83]{KS83}
K. Kato, S. Saito, \emph{Unramified class field theory of arithmetical surfaces}, Ann. of Math. (2) 118 (1983), no. 2, 241--275.

\bibitem[KM74]{KM74}
N. Katz, W. Messing, \emph{Some consequences of the Riemann hypothesis for varieties over finite fields}, Invent. Math. 23 (1974), 73--77.

\bibitem[KY18]{KY18}
S. Kondo, S. Yasuda, \emph{First and second $K$-groups of an elliptic curve over a global field of positive characteristic}, Ann. Inst. Fourier (Grenoble) 68 (2018), no. 5, 2005--2067.

\bibitem[Lic05]{Lic05}
S. Lichtenbaum, \emph{The Weil-\'{e}tale topology on schemes over finite fields}, Compos. Math. 141 (2005), no. 3, 689--702.

\bibitem[Lie15]{Lie15}
C. Liedtke, \emph{Supersingular K3 surfaces are unirational}, Invent. Math. 200 (2015), no. 3, 979--1014.

\bibitem[MP15]{MP15}
K. Madapusi Pera, \emph{The Tate conjecture for K3 surfaces in odd characteristic}, Invent. Math. 201 (2015), no. 2, 625--668.

\bibitem[MVW06]{MVW06}
C. Mazza, V. Voevodsky, C. Weibel, \emph{Lecture notes on motivic cohomology}, Clay Mathematics Monographs, 2. American Mathematical Society, Providence, RI; Clay Mathematics Institute, Cambridge, MA, 2006.

\bibitem[Mor19]{Mor19}
M. Morrow, \emph{A variational Tate conjecture in crystalline cohomology}, J. Eur. Math. Soc. (JEMS) 21 (2019), no. 11, 3467--3511.

\bibitem[Ped00]{Ped00}
C. Pedrini, \emph{Bloch's conjecture and the $K$-theory of projective surfaces}, The arithmetic and geometry of algebraic cycles (Banff, AB, 1998), 195--213, CRM Proc. Lecture Notes, 24, Amer. Math. Soc., Providence, RI, 2000.

\bibitem[Qui72]{Qui72}
D. Quillen, \emph{On the cohomology and $K$-theory of the general linear groups over a finite field},  Ann. of Math. (2) 96 (1972), 552--586.

\bibitem[RS78]{RS78}
A. Rudakov, I. Shafarevich, \emph{Supersingular K3 surfaces over fields of characteristic
$2$}, Izv. Akad. Nauk SSSR Ser. Mat. 42 (1978), 848--869.

\bibitem[Sai91]{Sai91}
S. Saito, \emph{On the cycle map for torsion algebraic cycles of codimension two}, Invent. Math. 106 (1991), no. 3, 443--460. 
 
\bibitem[Shi74]{Shi74}
T. Shioda, \emph{ An example of unirational surfaces in characteristic $p$}, Math. Ann. 211 (1974), 233--236.

\bibitem[Shi77]{Shi77}
T. Shioda, \emph{Some results on unirationality of algebraic surfaces}, Math. Ann. 230 (1977),
153--168.
 
\bibitem[Sou84]{Sou84}
C. Soul\'{e}, \emph{Groupes de Chow et $K$-th\'{e}orie de vari\'{e}t\'{e}s sur un corps fini}, Math. Ann. 268 (1984), no. 3, 317--345. 

\bibitem[Stacks]{Stacks}
The Stacks Project Authors, \emph{Stacks Project}, https://stacks.math.columbia.edu, 2022. 

\bibitem[Tat66]{Tat66}
J. Tate, \emph{Endomorphisms of abelian varieties over finite fields}, Invent. Math., 2 (1966), 134--144.

\bibitem[Tho85]{Tho85}
R. W. Thomason, \emph{Algebraic $K$-theory and \'{e}tale cohomology}, Ann. Sci. \'{E}cole Norm. Sup. (4)  18 (1985), no. 3, 437--552.

\bibitem[Via17]{Via17}
C. Vial, \emph{Remarks on motives of abelian type},  Tohoku Math. J. (2) 69 (2017), no. 2, 195--220.

\bibitem[Wei05]{Wei05}
C. Weibel, \emph{Algebraic $K$-theory of rings of integers in local and global fields}, Handbook of $K$-theory. Vol. 1, 2, 139--190, Springer, Berlin, 2005.
\end{thebibliography}
\end{document}